\newenvironment{keyword}[1]{\medskip\noindent\textbf{Keywords:}\ #1}
\newcommand{\timestamp}{
{\protect\small\sl\today\ --
  \ifnum\timehh<10 0\fi\number\timehh\,:\,
  \ifnum\timemm<10 0\fi\number\timemm}}
\newif\ifcomment
\newcommand{\via}[1]{}
\newcommand{\SKIP}[1]{}
\newcommand{\mmu}{\widetilde{\mu}}
\newcommand{\lm}{\ensuremath{\lambda \mu}}
\newcommand{\lmm}{\ensuremath{\overline{\lambda} \mu\mmu}}
\newcommand{\lG}{\ensuremath{\lambda^{\mathsf{Gtz}}}}
\newcommand{\llG}{\lambda_{\circledR}^{\mathsf{Gtz}}}
\newcommand{\rcl}{\lambda_{\circledR}}
\newcommand{\llxr}{\lambda \mathsf{lxr}}
\newcommand{\isub}[2]{[#1/#2]}   
\newcommand{\app}[2]{#1@#2}
\newcommand{\bindx}{\widehat x.}
\newcommand{\bindy}{\widehat y.}
\newcommand{\bindz}{\widehat z.}
\newcommand{\append}[2]{#1@#2}
\newcommand{\intt}[1]{\lfloor #1 \rfloor^{\mathcal{R}}}
\newcommand{\intc}[1]{\lfloor #1 \rfloor_k^{\mathcal{R}}}
\newcommand{\weak}[2]{#1 \odot #2}
\newcommand{\cont}[4]{#1 < ^{#2}_{#3}#4}
\newcommand{\size}[1]{\mathcal{S}^{\mathcal{R}}(#1)}
\newcommand{\cnorm}[1]{|\!|#1|\!|_\mathsf{c}^{\mathcal{R}}}
\newcommand{\wnorm}[1]{|\!|#1|\!|_\mathsf{w}^{\mathcal{R}}}
\newcommand{\isubs}[2]{#1/#2}   
\newcommand{\tlam}{{Types}} 
\newcommand{\lefti}{[ \! [}      
\newcommand{\righti}{] \! ]}
\newcommand{\ti}[1]{\lefti #1 \righti^{\mathcal{R}}}
\newcommand{\tei}{\ti} 
\newcommand{\SN}{\ensuremath{\mathcal{SN}}} 
\newcommand{\SNc}{\ensuremath{\mathcal{SN}}_{\mathcal{R}}^{\circ}} 
\newcommand{\SNR}{\ensuremath{\mathcal{SN}}_{\mathcal{R}}} 
\newcommand{\INH}{\textsf{INH}}
\newcommand{\VAR}{\textsf{VAR}}
\newcommand{\SAT}{\textsf{SAT}}
\newcommand{\vX}{\mathcal{X}} 
\newcommand{\vM}{\mathcal{M}} 
\newcommand{\vN}{\mathcal{N}} 
\newcommand{\tA}{\alpha} 
\newcommand{\tB}{\beta} 
\newcommand{\tC}{\gamma} 
\newcommand{\tD}{\delta} 
\newcommand{\tE}{\epsilon} 
\newcommand{\tS}{\sigma} 
\newcommand{\tT}{\tau} 
\newcommand{\tR}{\rho} 
\newcommand{\tU}{\upsilon}
\newcommand{\ggr}{\gg^{\mathcal{R}}}
\newcommand{\fsto}{\xymatrix@C=15pt{\ar@{>}[r] &}}
\newcommand{\f}{f}
\newcommand{\lGR}{\lambda^{\mathsf{Gtz}}_{\mathcal{R}}}
\newcommand{\lGo}{\lambda^{\mathsf{Gtz}}_{\emptyset}}
\newcommand{\lGc}{\lambda^{\mathsf{Gtz}}_{\mathsf{c}}}
\newcommand{\lGw}{\lambda^{\mathsf{Gtz}}_{\mathsf{w}}}
\newcommand{\lGcw}{\lambda^{\mathsf{Gtz}}_{\mathsf{c}\mathsf{w}}}
\newcommand{\co}{\mathsf{c}}
\newcommand{\w}{\mathsf{w}}
\newcommand{\TGR}{\mathrm{T}^{\mathsf{Gtz}}_{\mathcal{R}}} 
\newcommand{\KGR}{\mathrm{K}^{\mathsf{Gtz}}_{\mathcal{R}}} 
\newcommand{\LGR}{\Lambda^{\mathsf{Gtz}}_{\mathcal{R}}} 
\newcommand{\LGo}{\Lambda^{\mathsf{Gtz}}_{\emptyset}}
\newcommand{\LGc}{\Lambda^{\mathsf{Gtz}}_{\mathsf{c}}}
\newcommand{\LGw}{\Lambda^{\mathsf{Gtz}}_{\mathsf{w}}}
\newcommand{\LGcw}{\Lambda^{\mathsf{Gtz}}_{\mathsf{c}\mathsf{w}}}
\newcommand{\unc}{\sqcup_{\mathsf{c}}}
\newcommand{\vdashr}{\vdash_{\mathcal{R}}}
\newcommand{\modelsr}{\models_{\mathcal{R}}}
\newcommand{\lR}{\lambda_{\mathcal{R}}}
\newcommand{\lo}{\lambda_{\emptyset}}
\newcommand{\lc}{\lambda_{\mathsf{c}}}
\newcommand{\lw}{\lambda_{\mathsf{w}}}
\newcommand{\lcw}{\lambda_{\mathsf{c}\mathsf{w}}}
\newcommand{\LR}{\Lambda_{\mathcal{R}}}
\newcommand{\LRc}{\Lambda_{\mathcal{R}}^{\ensuremath{\circ}}}
\newcommand{\LRvdash}[2]{\ensuremath{\LR{\scriptscriptstyle(#1\vdash_{\lR} #2)}}}
\newif\ifcomment
\newcommand{\comment}[2]{\ifcomment
                         \begin{bf}#1: \color{blue}{#2}
                           ***\end{bf}
                         \fi}
\newcommand{\sil}[1]{\ifcomment {\color{Green} {#1}} \else #1\fi}
\newcommand{\silv}[1]{\ifcomment {\color{RoyalPurple} {#1}} \else #1\fi}
\definecolor{darkgreen}{rgb}{0,.5,0}
\newcommand{\jel}[1]{\ifcomment {\color{darkgreen} {#1}} \else #1\fi}
\definecolor{darkbrown}{cmyk}{.3,.75,.75,.15}
\newcommand{\pierre}[1]{\ifcomment {\color{darkbrown} {#1}} \else
  #1\fi}
\newcommand{\compier}[1]{\comment{\color{RedOrange}{Pierre}}{#1}}
\newcommand{\ead}[1]{\thanks{\textsf{Email:}~#1}}
\begin{document}
\newtheorem{exa}{Example}
\newtheorem{prop}{Proposition}
\newtheorem{lem}{Lemma}
\newtheorem{thm}{Theorem}
\newtheorem{rem}{Remark}
\newtheorem{defi}{Definition}
\newenvironment{proof}[1]{\begin{quotation}\noindent\textsf{Proof:} #1}%
{\(\Box\)\end{quotation}}

\title{Resource control and strong normalisation}

\author[1]{S. Ghilezan \ead{gsilvia@uns.ac.rs}}
\author[1]{J. Iveti\' c  \ead{jelenaivetic@uns.ac.rs}}
\author[2]{P. Lescanne \ead{pierre.lescanne@ens-lyon.fr}}
\author[3]{S. Likavec \ead{likavec@di.unito.it}}
\affil[1]{University of Novi Sad, Faculty of Technical Sciences,  Serbia}
\affil[2]{University of Lyon, \' Ecole Normal Sup\' erieure de Lyon, France}
\affil[3]{Dipartimento di Informatica, Universit\`a di Torino, Italy}
%


\date{\today}
\maketitle

\ifcomment \centerline{\timestamp} \fi

\begin{abstract}
We introduce the \emph{resource control cube}, a system
consisting of eight intuitionistic lambda calculi with either
implicit or explicit control of resources and with either natural
deduction or sequent calculus. The four calculi of the
cube that correspond to natural deduction have been proposed by
Kesner and Renaud and the four calculi that correspond to sequent lambda
calculi are introduced in this paper.
The presentation is parameterized  with the set of resources (weakening or
contraction), which enables a uniform treatment of the eight
calculi of the cube.
The simply typed resource control cube, on the one hand,
expands the Curry-Howard correspondence to intuitionistic natural
deduction and intuitionistic sequent logic with implicit or
explicit structural rules and, on the other hand, is related to
substructural logics.

We propose a general intersection type system for the resource control cube calculi.
Our main contribution is a characterisation of strong normalisation of reductions in this cube.
First, we prove that typeability implies strong normalisation in the ``natural deduction base"
of the cube by adapting the reducibility method. We then prove that typeability implies strong
normalisation in the ``sequent base" of the cube by using a combination of well-orders and a
suitable embedding in the ``natural deduction base". Finally, we prove that strong normalisation
implies typeability in the cube using head subject expansion. All proofs are general and can be
made specific to each calculus of the cube by instantiating the set of resources.

\begin{keyword}
  lambda calculus, sequent calculus, resource control, intersection types, strong
  normalisation
\end{keyword}

\medskip

\textit{1998 ACM Subject Classification: } F.4.1 [Mathematical Logic]: Lambda calculus and related systems, F.3.1
  [Specifying and Verifying and Reasoning about Programs]: Logics of programs, F.3.2
  [Semantics of Programming Languages]: Operational semantics, F.3.3 [Studies of
  Program Constructs]: Type structure

\end{abstract}

\section*{Introduction}
\label{sec:intro}

Curry--Howard correspondence or formulae-as-types and proofs-as-programs paradigm~\cite{howa80}, establishes a fundamental connection between various logical and computational systems.
Simply typed $\lambda$-calculus provides the computational interpretation of
intuitionistic natural deduction where simplifying a proof corresponds to a program execution.
The correspondence between sequent calculus derivations and natural deduction derivations
is not a one-to-one map: several cut-free derivations correspond to one normal derivation~\cite{bareghil00}.
Starting from Griffin's extension of the Curry--Howard correspondence to classical logic~\cite{grif90},
this connection has been extended to other calculi and logical systems.
For instance, Parigot's \lm-calculus~\cite{pari92} corresponds to classical natural deduction and as such inspired investigation into the relationship between classical logic and theories of control
in programming languages~\cite{pari97,groo94,ongstew97,bier98,arioherb03,herbghil08}.
In the realm of sequent calculus,
Herbelin's $\overline{\lambda}$-calculus~\cite{herb95b} and
Esp\'{\i}rito Santo's $\lG$-calculus~\cite{jesTLCA07} correspond to intuitionistic sequent calculus,
whereas Barbanera and Berardi's symmetric calculus~\cite{barbbera96} and
Curien and Herbelin's $\lmm$-calculus~\cite{curiherb00} correspond to its classical variants.
An extensive overview of this subject can be found in~\cite{soreurzy06,ghillika08}.

Our intention is to bring this correspondence to the calculi with control operators,
namely erasure and duplication, which correspond to weakening and contraction on the logical side.
The wish to control the use of variables in a $\lambda$-term can be traced back to
Church~\cite{ChurchCalculiOfLambdaConversion} who introduced
the $\lambda I$-calculus. According to Church, the variables bound
by $\lambda$ abstraction should occur in the term at least once.
More recently, following the ideas of linear logic~\cite{LL},
van Oostrom~\cite{oost01} proposed to extend the $\lambda$-calculus,
and Kesner and Lengrand~\cite{kesnleng07}
proposed to extend the $\lambda$x-calculus with explicit
substitution, with operators to tightly control the use of
variables (resources). Resource control in sequent calculus corresponding to classical logic was proposed in \cite{zunicPHD}. Resource control
both in $\lambda$-calculus and  $\lambda$x-calculus is proposed in
\cite{kesnrena09,kesnrena11}, whereas resource control for sequent $\lambda$-calculus is proposed in
\cite{ghilivetlesczuni11,ghilivetlesclika11}. Like in the $\lambda
I$-calculus, bound variables must still occur in the term, but if
a variable $x$ is not used in a term $M$ this can be expressed by
using the expression $\weak{x}{M}$ where the operator $\odot$ is
called \emph{erasure (aka weakening)}. In this way, the term $M$
does not contain the variable $x$, but the term $\weak{x}{M}$
does. Similarly, a variable should not occur twice. If
nevertheless, we want to have two positions for the same variable,
we have to duplicate it explicitly, using fresh names. This is
done by using the operator $\cont{x}{x_1}{x_2}{\ }$, called
\emph{duplication (aka contraction)} which creates two fresh
variables $x_1$ and $x_2$. Extending the classical
$\lambda$-calculus and the sequent $\lambda$-calculus $\lG$ with
explicit erasure and duplication provides the Curry--Howard
correspondence for intuitionistic natural deduction and
intuitionistic sequent calculus with explicit structural rules, as
investigated in~\cite{kesnleng07,kesnrena09,kesnrena11,ghilivetlesczuni11}.
The notation used in this paper is along the lines of \cite{zunicPHD} and close to \cite{oost01}.

A different approach to the resource aware lambda calculus,
motivated mostly by the development of the process calculi,
was investigated by Boudol in~\cite{boud93}. Instead of extending the
syntax of $\lambda$-calculus with explicit resource operators, Boudol
proposed a non-deterministic calculus with a generalised notion of application.
In his work, a function is applied to a structure called a bag, having the form
$(N^{m_1}_1|...|N^{m_k}_k)$ in which $N_i,\;i=1,...,k$ are resources and
$m_i \in \mathbb{N}\cup \{\infty\},\;i=1,...,k$ are multiplicities, representing
the maximum possible number of the resource usage. In this framework, the usual
application is written as $MN^{\infty}$. The theory was further developed
in~\cite{boudcurilava99}, connected to linear logic via differential
$\lambda$-calculus in~\cite{ehrhregn03} and even typed with non-idempotent intersection types in~\cite{pagaronc10}.

Our contribution is inspired by Kesner and Lengrand's~\cite{kesnleng07} and Kesner and Renaud's~\cite{kesnrena09,kesnrena11} work on resource operators for
$\lambda$-calculus. The linear $\llxr$ calculus of Kesner and Lengrand introduces operators for substitution, erasure and duplication, preserving at the same time strong
normalisation, confluence and subject reduction property of its predecessor ${\lambda
\mathsf{x}}$~\cite{BlooRose95}. This approach is then generalised in Kesner and Renaud's \emph{Prismoid of Resources}~\cite{kesnrena09,kesnrena11} by considering various combinations in which  these three operators are made explicit or implicit.
In this paper we introduce the notion of \emph{resource control cube}, consisting of two systems $\lR$ and $\lGR$, which can be seen as two opposite sides of the cube (see Figure~\ref{fig:cube}).
In $\lR$ we consider four calculi, each of which is obtained by making erasure (aka
weakening) or duplication (aka contraction) explicit or implicit.
In $\lGR$ we consider the four sequent style counterparts of the $\lR$-calculi.
We will call the $\lR$ system \emph{the natural deduction ND-base of the cube}, and the $\lGR$ system \emph{the sequent LJ-base of the cube}\footnote{We are aware that our notation is not symmetric, since $\mathsf{Gtz}$ is specified explicitly for the LJ-base but $\mathsf{ND}$ is not for the ND-base. This was a conscious decision because ND-base contains ordinary $\lambda$-calculus where correspondence with natural deduction is not explicitly written.}.
Explicit control of erasure and
duplication
leads to decomposing of reduction steps into more atomic ones, thus revealing details of
computation which are usually left implicit.  Since erasing and duplicating of (sub)terms
essentially changes the structure of a program, it is important to see how this mechanism
really works and to be able to control this part of computation.

\begin{figure}[hbtp]
\begin{center}
\includegraphics[scale=0.7]{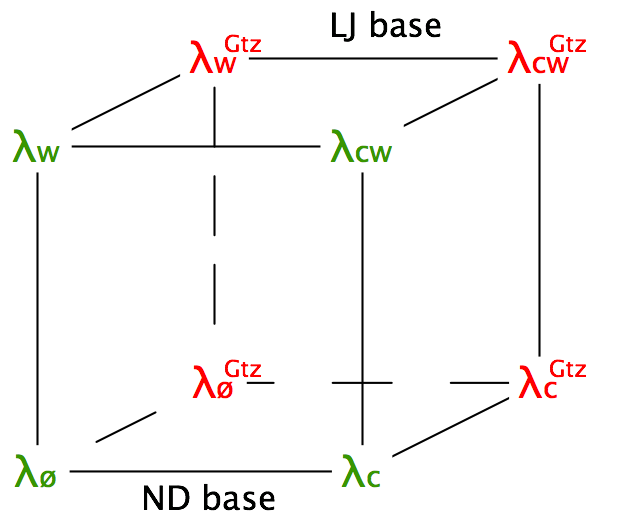}
\caption{Resource control cube}
\label{fig:cube}
\end{center}
\end{figure}

Let us turn our attention to type assignment systems for various term calculi.
The basic type assignment system is the one with \emph{simple types} in which the only forming operator is an arrow~$\rightarrow$.
Among many extensions of this type assignment system, the one that characterises the largest class of terms, is the extension with \emph{intersection types}, where a new type forming operator $\cap$ is introduced. The principal feature of this operator is the possibility to assign two different types to a certain term at the same time.
The intersection types have been originally introduced
in~\cite{coppdeza78,sall78,coppdeza80,pott80,coppdezavenn80} in order to characterise termination properties of various term calculi~\cite{bake92,gall98,ghil96}.
The extension of Curry--Howard correspondence to other formalisms brought the need for
intersection types into many different
settings~\cite{dougghillesc07,kikuchiRTA07,matthes2000,neer05}.

We introduce the intersection types into all eight calculi of the resource control cube.
Our intersection type assignment systems $\lR\cap$ and $\lGR\cap$ integrate intersections into logical rules, thus preserving syntax-directedness of the systems.
We assign a restricted form of intersection types to terms, namely strict
types, therefore eliminating the need for pre-order on types.
Using these intersection type assignment systems we manage to completely characterise strong normalisation in the cube by proving that
terms in all calculi in both bases enjoy the strong normalisation property if
and only if they are typeable.
While this characterization is well-known for the pure $\lambda$-calculus and $\lG$-calculus, it is a new result for the other six calculi of the cube.
The paper proves the characterization in a modular way, presenting one uniform proof for the first four calculi in natural deduction style, and one uniform proof for the four sequent style calculi.
Concerning the sequent calculus, to the best of our knowledge, there is no literature on explicit resource control operators for $\lG$-calculus, nor on the connection to intersection types (except for the initial version of this work presented in~\cite{ghilivetlesclika11}).

The rest of the paper is organised as follows.
In Section~\ref{sec:syntax} we introduce the untyped resource control cube
which consists of eight calculi $\lR$ and $\lGR$ with different combinations
of explicit/implicit control operators of weakening and contraction.
Basic type assignment systems with simple types for these calculi
are given in Section~\ref{sec:simple_types}.
Intersection type assignment systems with strict types are
introduced in Section~\ref{sec:int_types}.
By adapting the reducibility method to explicit resource control
operators, we first prove that typeable terms are strongly normalising
in $\lR$-calculi of the ND-base in Section~\ref{sec:typeSN}. Next, we prove
that typeability implies strong normalization in
$\lGR$-calculi of the LJ-base by using a combination of well-orders and a suitable
embedding of $\lGR$-terms into $\lR$-terms which preserves types
and enables the simulation of all the reductions in $\lGR$-calculi by the
operational semantics of the $\lR$-calculi.
Finally, in Section~\ref{sec:SNtypeBoth}, we prove
that strong normalisation implies typeability in both systems using
head subject expansion.
We conclude in Section~\ref{sec:conclusions}.

\tableofcontents

\section{Untyped resource control cube}
\label{sec:syntax}
\subsection{Resource control lambda calculi $\lR$}

In this section we present four calculi obtained by adding contraction and weakening to the
$\lambda$-calculus either as explicit or implicit operators. We
denote these four calculi by~$\lR$, where $\mathcal{R}\subseteq
\{\co, \w \}$ and $\co$, $\w$ denote explicit contraction and
weakening, respectively. \jel{The presented system operationally corresponds to the four calculi of the so-called implicit base of Kesner and Renaud's prismoid~\cite{kesnrena09}, as will be showed in Section~\ref{sec:vs}}. We use a notation along the lines of \cite{zunicPHD} and close to \cite{oost01}. It is slightly modified
w.r.t.~\cite{kesnrena09} in order to emphasize the correspondence
between these calculi and their sequent
counterparts.\footnote{Note that
in~\cite{kesnrena09}, instead of considering the sequent counterparts, the authors
consider the so called explicit base, i.e.\ the four corresponding calculi with explicit
substitutions.}  For the convenience of the reader and to avoid repetition, we present
all the calculi in a uniform way.  This implies that some
constructions or features are part of one calculus and not of the
others.  When a feature occurs in a calculus associated with the
operator~$\textsf{r} \in \mathcal{R}$  and is ignored elsewhere, we put this feature
between brackets indexed by the symbol~\textsf{r}.  For instance,
if we write $[x \in Fv(f)]_{\mathsf{w}}$, this means that the condition
$x \in Fv(f)$ appears only in the calculus which contains explicit
weakening, as seen from the index \textsf{w}.

In order to define the terms of the calculus, we introduce a
syntactic category called \emph{pre-terms}.
A \emph{pre-term} can
be a variable, an abstraction, an application, a contraction or a
weakening. The abstract syntax of the $\lR$ pre-terms is given by
the following:
$$
\begin{array}{lcrcl}
\textsf{Pre-terms}    &  & f & ::= & x\,|\,\lambda
x.f\,|\,ff \,|\, \cont{x}{x_1}{x_2}{f} \,|\,\weak{x}{f}\\
\end{array}
$$
where $x$ ranges over a denumerable set of term variables

\jel{The list of free variables of a pre-term $\f$, denoted by $Fv[\f]$,
is defined as follows (where $l,m$ denotes appending two lists and $l\setminus x$ denotes removing all occurrences  of an element from a list)}:
$$\begin{array}{c}
Fv[x] = x; \quad Fv[\lambda x.\f] = Fv[\f]\setminus x; \quad
Fv[fg] = Fv[f], Fv[g];\\
\left[Fv[\weak{x}{f}] = x,Fv[f]\right]_{\w}; \\
\left[Fv[\cont{x}{x_1}{x_2}{f}] = \left\{
    \begin{array}{rl}
        Fv[f], & x_1 \notin Fv[f]\;\; \mbox{and};\; x_2 \notin Fv[f]\\
        x, (Fv[f]\setminus \{x_1,x_2\}), & x_1 \in Fv[f] \;\;\mbox{or}\;\; x_2 \in Fv[f]
    \end{array}\right.
\right]_{\co}
\end{array}$$

\jel{The set of free variables, denoted by $Fv(f)$, is extracted out of the list $Fv[f]$, by un-ordering the list and removing multiple occurrences of each variable, if any. The set of bound variables of a pre-term $f$, denoted by $Bv(f)$, contains all variables of $f$ that are not free in it, i.e. $Bv(f)=Var(f)\setminus Fv(f)$.}
In $\cont{x}{x_1}{x_2}{f}$, the duplication binds the variables
$x_1$ and $x_2$ in $f$ and introduces a fresh variable $x$ if
at least one of $x_1,x_2$ is free in $f$. In $\weak {x}{f}$, the
variable $x$ is free. In order to avoid parentheses, we let the
scope of all binders extend to the right as much as possible.

The sets of $\lR$-terms, denoted by $\LR$, are subsets of the set of pre-terms
and are defined by the inference rules given in
Figure~\ref{fig:wf-small}. In the reminder of the paper, we will use $M, N, P...$ to denote terms.
We also use the notation $\weak{X}{M}$ for $\weak{x_1}{...\;\weak{x_n}{M}}$ and
$\cont{X}{Y}{Z}{M}$ for $\cont{x_1}{y_1}{z_1}{...\;\cont{x_n}{y_n}{z_n}{M}}$, where $X$,
$Y$ and $Z$ are lists of size $n$, consisting of all distinct variables $x_1,...,x_n,
y_1,..., y_n, z_1,...,z_n$. If $n=0$, i.e., if $X$ is the empty list, then $\weak{X}{M} =
\cont{X}{Y}{Z}{M} = M$.  In particular, given a term $N$, we use a specific notation:
\(\cont{Fv[N]}{Fv(N_1)}{FV{N_2}} M[N/x_1,N/x_2]\).  Assume that $N$ has the list
$Fv(N)=(z_1,...,z_p)$ as free variables.  We write $N$ as $N(z_1,...,z_p)$ and $N_1$ and
$N_2$, as the term in which the variables have been renamed with fresh variables.  I.e.,
$N_1= N(z_{1,1},... z_{1,p})$ and $N_2=
N(z_{2,1},... z_{2,p})$.  \[\cont{Fv[N]}{Fv(N_1)}{FV{N_2}} M[N/x_1,N/x_2]\]
is by definition:
\[\cont{z_1}{z_{1,1}}{z_{2,1}}...\cont{z_p}{z_{1,p}}{z_{2,p}}\,M[N(z_{1,1},... z_{1,p})/x_1][N(z_{2,1},... z_{2,p})/x_2].\]
Note that due to the equivalence relation defined in Figure~\ref{fig:equiv-lR}, we can use
these notations also for a set of triple of variables, all distinct. 

\begin{figure}[htpb]
\centerline{ \framebox{ $
    \begin{array}{c}
    \\
    \infer[\jel{(var)}]{x \in \LR}{}
    \\\\
      \begin{array}{c@{\qquad\qquad}c}
        \infer[\jel{(abs)}]{\lambda x.f \in \LR}
        {f \in \LR \;\; [x \in Fv(\f)]_{\w}} &
        \infer[\jel{(app)}]{fg \in \LR}
        {f \in \LR\;\; g \in \LR \;\; [Fv(f) \cap Fv(g) = \emptyset]_{\co}}
      \end{array}
      \\\\
      \begin{array}{c} 
      \infer[(\w \in \mathcal{R})\;\jel{(era)}]{\weak{x}{f} \in \LR}
      {f \in \LR \;\; x \notin Fv(f)} 
      \end{array}
      \\\\
      \begin{array}{c} 
      \infer[(\co \in \mathcal{R})\;\jel{(dup)}]{\cont{x}{x_1}{x_2}{f} \in \LR}
      {f \in \LR \;\; x_{1} \neq x_{2}\;\; x \notin Fv(f) \setminus \{x_{1}, x_{2}\} \;\; [x_1,x_2 \in Fv(f)]_{\w}}
    \end{array}\\ \\
  \end{array}
$ }} \caption{$\lR$-terms} \label{fig:wf-small}
\end{figure}

\begin{exa}
Pre-terms $\lambda x.y$ and $\cont{y}{y_1}{y_2}{x}$ are
$\lR$-terms only if $\w \notin \mathcal{R}$. Similarly,
pre-terms $\lambda x.xx$ and $\weak{x}{\lambda y.yy}$ are
$\lR$-terms only if $\co \notin \mathcal{R}$.
\end{exa}

In what follows we use Barendregt's convention~\cite{Bare84} for variables: in the same context a variable cannot be both free and bound. This applies to binders like $`l x.M$ which binds $x$ in $M$, $\cont{x}{x_1}{x_2}M$ which binds $x_1$ and $x_2$ in $M$, and also to the implicit substitution $M[N/x]$ which can be seen as a binder for $x$ in $M$.

Implicit substitution $M\isub{N}{x}$, \jel{where $x \notin Bv(M)$}, is defined in Figure~\ref{fig:sub-lR}.
In this definition, \jel{the following additional assumptions must hold}:
$$[Fv(M) \cap Fv(N) = \emptyset]_{\co},\quad \quad \jel{[x \in Fv(M)]_{\w}}$$
otherwise the substitution result would not be a well-formed term.
In the same definition, terms $N_1$ and $N_2$ are obtained from $N$ by renaming all free variables in $N$ by
fresh variables, and $M[N_1/x_1,N_2/x_2]$ denotes parallel substitution.\footnote{Note that the terms $N_{1}$ and $N_{2}$ do not have any free variables in common hence, it is not a problem to perform the substitution in parallel.} \jel{Notice that substitution is created only in $(\beta)$-reduction, yielding that $N$ is not any term, but the part of the term $(\lambda x.M)N$, therefore Barendregt's convention applies to it. For that reason we don't need side conditions like $y \notin Fv(N)$ in the definition of $(\lambda y.M)\isub{N}{x}$.}

\begin{figure}[hbtp]
\centerline{ \framebox{ $
\begin{array}{rclrcl}\\
x\isub{N}{x} & \triangleq & N  &
(\weak{y}{M})\isub{N}{x} &\triangleq  & \weak{\{y\} \setminus Fv(N)}{M\isub{N}{x}}, \;\;x \neq y\\
y\isub{N}{x} & \triangleq & y, \;\; x \not= y &
(\weak{x}{M})\isub{N}{x} & \triangleq  & \weak{Fv(N) \setminus Fv(M)}{M}\\
 (\lambda y.M)\isub{N}{x} & \triangleq  & \lambda y.M\isub{N}{x},
\;\;x \neq y
& (\cont{y}{y_1}{y_2}{M})\isub{N}{x} & \triangleq  &
\cont{y}{y_1}{y_2}{M\isub{N}{x}}, \;\;x \neq y\\
(MP)\isub{N}{x} & \triangleq  & M\isub{N}{x} P\isub{N}{x} &
(\cont{x}{x_1}{x_2}{M})\isub{N}{x} & \triangleq  &
\cont{Fv[N]}{Fv[N_1]}{Fv[N_2]}{M[N_1/x_1,N_2/x_2]}\\ \\
\end{array}
$ }}
\caption{Substitution in $\lR$-calculi} \label{fig:sub-lR}
\end{figure}

\jel{
In the following proposition, we prove that the substitution is well-defined.

\begin{prop}
If $M,N \in \LR$ and $x \notin Bv(M)$, then $M\isub{N}{x} \in \LR$, provided that 
$[Fv(M) \cap Fv(N) = \emptyset]_{\co}$ and $[x \in Fv(M)]_{\w}$.
\end{prop}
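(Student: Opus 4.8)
The plan is to proceed by induction on the structure of $M$, following the defining clauses of substitution in Figure~\ref{fig:sub-lR} and checking, in each case, that the resulting pre-term satisfies the inference rules of Figure~\ref{fig:wf-small} together with the bracketed side conditions. A preliminary observation used throughout is the behaviour of free variables under substitution: when $x \in Fv(M)$ one has $Fv(M\isub{N}{x}) = (Fv(M) \setminus \{x\}) \cup Fv(N)$, whereas when $x \notin Fv(M)$ one has $M\isub{N}{x} = M$. I would establish this as an easy preceding lemma (a plain structural induction, since $N$ is inserted only at the $x\isub{N}{x}=N$ clause and the two clauses acting on a weakened or contracted $x$). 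Barendregt's convention, applied to $M\isub{N}{x}$ seen as a binder for $x$, guarantees that every bound variable of $M$ and every variable of $N$ is fresh; in particular $x \notin Fv(N)$, and the binders $y, y_1, y_2$ encountered below are distinct from $x$ and absent from $N$.

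The base cases are immediate: $x\isub{N}{x} = N \in \LR$ by assumption, and for $y \neq x$ the equality $y\isub{N}{x} = y$ gives a term by $(var)$ — this subcase being vacuous when $\w \in \mathcal{R}$, since then the hypothesis $x \in Fv(M)$ fails for $M = y$. For the abstraction clause $(\lambda y.M)\isub{N}{x} = \lambda y.M\isub{N}{x}$ and the contraction clause $(\cont{y}{y_1}{y_2}{M})\isub{N}{x} = \cont{y}{y_1}{y_2}{M\isub{N}{x}}$ with $x \neq y$, I first check that the induction hypothesis applies to the immediate subterm: the provisos $[x \in Fv(M)]_{\w}$ and $[Fv(M) \cap Fv(N) = \emptyset]_{\co}$ for the subterm follow from those for the whole term together with $y, y_1, y_2 \notin Fv(N)$. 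Since the substitution leaves these binders untouched, the side conditions of $(abs)$ and $(dup)$ (freshness of $y$, distinctness of $y_1, y_2$, and the $\w$-occurrence conditions) are preserved under the free-variable description above. The only genuinely new verification is in the application clause $(M_1M_2)\isub{N}{x} = M_1\isub{N}{x}\,M_2\isub{N}{x}$, for the $\co$ condition $Fv(M_1\isub{N}{x}) \cap Fv(M_2\isub{N}{x}) = \emptyset$: from $Fv(M_1) \cap Fv(M_2) = \emptyset$ the variable $x$ is free in at most one subterm, so (using $M_i\isub{N}{x} = M_i$ for the subterm not containing $x$) the set $Fv(N)$ enters at most one side, and disjointness follows from $Fv(N) \cap (Fv(M_1) \cup Fv(M_2)) = \emptyset$.

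The two weakening clauses require bookkeeping of the erased-variable lists. For $(\weak{x}{M})\isub{N}{x} = \weak{Fv(N) \setminus Fv(M)}{M}$ (which arises only when $\w \in \mathcal{R}$), inversion of $(era)$ gives $M \in \LR$ and $x \notin Fv(M)$; the iterated erasure is well formed because the variables of $Fv(N) \setminus Fv(M)$ are by construction outside $Fv(M)$ and pairwise distinct, so each application of $(era)$ meets its freshness condition. For $(\weak{y}{M})\isub{N}{x} = \weak{\{y\}\setminus Fv(N)}{M\isub{N}{x}}$ with $y \neq x$, the induction hypothesis handles $M\isub{N}{x}$, and one checks that $y$ remains erasable exactly when $y \notin Fv(N)$, which is precisely the content of $\{y\}\setminus Fv(N)$.

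I expect the main obstacle to be the last clause, $(\cont{x}{x_1}{x_2}{M})\isub{N}{x} = \cont{Fv[N]}{Fv[N_1]}{Fv[N_2]}{M[N_1/x_1, N_2/x_2]}$, where substituting for the contracted variable forces a simultaneous substitution of two renamed copies $N_1, N_2$ of $N$, followed by an iterated contraction of the free variables of $N$. Here I would first argue that the parallel substitution $M[N_1/x_1, N_2/x_2]$ is well formed, reducing it to two applications of the single-substitution statement; this is legitimate because $N_1$ and $N_2$ share no free variables, so the substitutions do not interfere. I would then verify that wrapping the result in $\cont{Fv[N]}{Fv[N_1]}{Fv[N_2]}{\cdot}$ meets the $(dup)$ conditions: the fresh renamings make $Fv[N_1]$ and $Fv[N_2]$ component-wise distinct and disjoint, and the freshness of $Fv(N)$ ensures the newly introduced contracted variables satisfy the required freshness and, under $\w$, occurrence conditions. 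Making precise the interaction between iterated contraction and parallel substitution, and confirming that it matches the specific notation introduced before Figure~\ref{fig:wf-small}, is the delicate part; everything else is routine verification against the rules of Figure~\ref{fig:wf-small}.
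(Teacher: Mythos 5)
Your overall strategy---structural induction on $M$ following the clauses of Figure~\ref{fig:sub-lR} and checking the rules of Figure~\ref{fig:wf-small}---is the right one, and the cases you call routine are indeed routine. But the case you yourself flag as the main obstacle, $(\cont{x}{x_1}{x_2}{M'})\isub{N}{x}$, contains a genuine gap. You propose to get well-formedness of the parallel substitution $M'[N_1/x_1,N_2/x_2]$ from ``two applications of the single-substitution statement.'' The first application, substituting $N_1$ for $x_1$ in $M'$, is covered by the induction hypothesis, since $M'$ is a proper subterm. The second application, however, would have to be made to the term $M'\isub{N_1}{x_1}$, which is \emph{not} a subterm of $M$ and can be arbitrarily larger than $M$; the statement you would be invoking for it is precisely the proposition being proved, so within a structural induction on $M$ this step is circular. (In addition, the identity $M'[N_1/x_1,N_2/x_2]=M'\isub{N_1}{x_1}\isub{N_2}{x_2}$ that your reduction presupposes is itself a lemma requiring proof; under the disjointness and freshness assumptions it is plausible, but it is not free.)

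The paper sidesteps exactly this problem: it proves the proposition \emph{simultaneously} with an auxiliary statement asserting that if $M,N_1,N_2 \in \LR$, $\co \in \mathcal{R}$, the free variables of $M$, $N_1$, $N_2$ are pairwise disjoint, $x_1 \notin Fv(N_2)$ and $x_2 \notin Fv(N_1)$, then $M[N_1/x_1,N_2/x_2] \in \LR$. Parallel substitution is thus treated by its own induction on the structure of $M$ (it is defined structurally, just like single substitution), and in the contraction case of the main statement one invokes the auxiliary induction hypothesis on the subterm $M'$ directly---never on a term already produced by a substitution. Your proof becomes correct if you add such a simultaneous parallel-substitution statement and verify its cases; as written, the reduction of parallel to sequential substitution does not go through.
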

\begin{proof} Proposition if proved together with auxiliary statement:\\
If $M,N_1,N_2 \in \LR$, $\co \in \mathcal{R}$, $Fv(N_1) \cap Fv(N_2) = Fv(N_1) \cap Fv(M) = Fv(N_2) \cap Fv(M) = \emptyset$, $x_1 \notin N_2$ and $x_2 \notin N_1$, then $M[N_1/x_1,N_2/x_2] \in \LR$.
\end{proof}

The operational semantics for four calculi that form the ``natural deduction base'' of the
resource control cube are given in Figures~\ref{fig:red-lR} and \ref{fig:equiv-lR}.
Reduction rules of $\lR$-calculi are given in Figure~\ref{fig:red-lR}, whereas equivalences in are given in Figure~\ref{fig:equiv-lR}.
Reduction rules specific for each calculus are given in Figure~\ref{fig:ND-base}.

\begin{figure}[hbtp] \centerline{ \framebox{ $
\begin{array}{rrcl}\\
(\beta)             & (\lambda x.M)N & \rightarrow & M\isub{N}{x}  \\[2mm]
(\gamma_0)       & \cont{x}{x_1}{x_2}{y} & \rightarrow & y \quad y\not= x_1,x_2\\
(\gamma^{\;\;'}_0)       & \cont{x}{x_1}{x_2}{x_1} & \rightarrow & x \\
(\gamma_1)          & \cont{x}{x_1}{x_2}{(\lambda y.M)} &
\rightarrow & \lambda y.\cont{x}{x_1}{x_2}{M} \\
(\gamma_2)          & \cont{x}{x_1}{x_2}{(MN)} & \rightarrow &
(\cont{x}{x_1}{x_2}{M})N, \;\mbox{if} \; x_1,x_2 \notin Fv(N) \\
(\gamma_3)          & \cont{x}{x_1}{x_2}{(MN)} & \rightarrow &
M(\cont{x}{x_1}{x_2}{N}), \;\mbox{if} \; x_1,x_2 \notin Fv(M) \\
[2mm]
(\omega_1)          & \lambda x.(\weak{y}{M}) & \rightarrow & \weak{y}{(\lambda x.M)},\;x \neq y\\
(\omega_2)          & (\weak{x}{M})N & \rightarrow & \weak{\{x\}\setminus Fv(N)}{(MN)}\\
(\omega_3)          & M(\weak{x}{N}) & \rightarrow & \weak{\{x\}\setminus
    Fv(M)}{(MN)}\\[2mm]
(\gamma \omega_1)   & \cont{x}{x_1}{x_2}{(\weak{y}{M})} &
\rightarrow & \weak{y}{(\cont{x}{x_1}{x_2}{M})},\;y \neq x_1,x_2 \\
(\gamma \omega_2)   & \cont{x}{x_1}{x_2}{(\weak{x_1}{M})} & \rightarrow & M\isub{x}{x_2}\\ [2mm]
%
\\
\end{array}
$ }} \caption{Reduction rules of $\lR$-calculi} \label{fig:red-lR}
\end{figure}

\begin{figure}[hbtp]
\centerline{ \framebox{ $
\begin{array}{lrcl}\\
(`e_1)&\weak{x}{(\weak{y}{M})} & \equiv & \weak{y}{(\weak{x}{M})}\\
(`e_2)&\cont{x}{x_1}{x_2}{M} & \equiv & \cont{x}{x_2}{x_1}{M}\\
(`e_3)&\cont{x}{y}{z}{(\cont{y}{u}{v}{M})} & \equiv  &
\cont{x}{y}{u}{(\cont{y}{z}{v}{M})} \\
(`e_4)&\cont{x}{x_1}{x_2}{(\cont{y}{y_1}{y_2}{M})} & \equiv  &
\cont{y}{y_1}{y_2}{(\cont{x}{x_1}{x_2}{M})},\;\; x \neq y_1,y_2, \; y \neq x_1,x_2 \\ \\
\end{array}
$ }} \caption{Equivalences in $\lR$-calculi} \label{fig:equiv-lR}
\end{figure}

\begin{figure}[hbtp]
\begin{tabular}{c|c|c}
$\lR$-calculi & reduction rules & equivalences\\ \hline $\lo$ & $\beta$ \\
\hline  $\lc$ & $\beta$, $\gamma_0$, $\gamma^{\;\;'}_0$, $\gamma_1, \gamma_2, \gamma_3$ & $`e_2,`e_3,`e_4$\\
\hline  $\lw$ & $\beta$, $\omega_1, \omega_2, \omega_3$ & $`e_1$\\
\hline  $\lcw$ & $\beta$, $\gamma_1, \gamma_2, \gamma_3$, $\omega_1, \omega_2, \omega_3$,
$\gamma\omega_1$, $\gamma\omega_2$ & $`e_1,`e_2,`e_3,`e_4$
\end{tabular}
\caption{ND base of the resource control cube} \label{fig:ND-base}
\end{figure}

\subsubsection*{Which reductions in which system?}
\label{sec:pierr-reduct-which}

Generally speaking the reduction rules can \pierre{be} divided into four groups.
The main computational step is $\beta$ reduction.
The group of $(\gamma)$ reductions exists only in the two calculi
that contain contraction (i.e., if $\co \in \mathcal{R}$). These
rules perform propagation of contraction into the expression.
Similarly, $(\omega)$ reductions belong only to the two calculi
containing weakening (i.e., if $\w \in \mathcal{R}$). Their role is
to extract weakening out of expressions. This discipline allows us
to optimize the computation by delaying duplication of terms on
the one hand, and by performing erasure of terms as soon as
possible on the other.
Finally, the rules in
$(\gamma\omega)$ group explain the interaction between explicit
resource operators that are of different nature, hence these rules
exist only if $\mathcal{R} = \{\co,\w\}$.
The rules $(\gamma_0)$ and $(\gamma^{\;\;'}_0)$\footnote{The rules $(\gamma_0)$ and $(\gamma^{\;\;'}_0)$ correspond to the $\mathsf{CGc}$ rule in~\cite{kesnrena09}.} exist only if $\mathcal{R} = \{\co\}$
and they erase meaningless contractions.

\begin{rem}
Notice the asymmetry between the reduction rules of the calculi
$\lc$ and $\lw$, namely in $\lw$ there is no counterpart of the
$(\gamma_0)$ and $(\gamma^{\;\;'}_0)$ reductions of $\lc$. The reason can be tracked
back to the definition of $\lR$-terms in Figure
~\ref{fig:wf-small}, where the definition of the weakening
operator $\weak{x}{f}$ does not depend on the presence of the
explicit contraction. It would be possible to define weakening
where the condition $x \notin Fv(f)$ is not required if the
contraction is implicit. In that case, terms like
$\weak{x}{\weak{x}{M}}$ would exist, and the reduction
$(\omega_0): \weak{x}{M}  \rightarrow  M,  \;\mbox{if} \; x
\in Fv(M)$ would erase this redundant weakening. The typing rule
for this weakening would require multiset treatment of the bases
$\Gamma$, which is out of the scope of this paper.
\end{rem}
\SKIP{
\pierre{
  Let us define $\simeq$ as the transitive closure of $`= \ \cup \ "<-" \ \cup\  "->"$.
We  can prove an interesting equivalence.
\begin{prop}[Erasure in a substitution]
  $M\isub{(\weak{y}{N})}{x}  \simeq  \weak{y}{(M\isub{N}{x})}$ if $x`:Fv(M)$ and ${y`;Fv(M)}$
\end{prop}
\begin{proof}
  We prove it by induction on $M$.
  \begin{description}
  \item[$M$ is a variable] $x\isub{(\weak{y}{N})}{x} = \weak{y}{N} =
    \weak{y}{(x\isub{N}{x})}$.
  \item[$M$ is an abstraction] where $z$ is neither $x$ nor $y$.
    \begin{eqnarray*}
      (`l z . M)\isub{(\weak{y}{N})}{x}  &=& `l
      z. (M\isub{(\weak{y}{N})}{x}) \\
      &\simeq&   `l z.\weak{y}{(M\isub{N}{x})} \\
      &"->"& \weak{y}{(`l z. M\isub{N}{x})}
    \end{eqnarray*}
  \item[$M$ is an application]
    \begin{eqnarray*}
      (M\,P)\isub{(\weak{y}{N})}{x}  &=&
      M\isub{(\weak{y}{N})}{x}\,P \isub{(\weak{y}{N})}{x}\\
      &\simeq&   \weak{y}{(M\isub{N}{x})} \, \weak{y}{(P\isub{N}{x})} \\
      &"->"& \weak{y}{(M\isub{N}{x}\,P\isub{N}{x})}
    \end{eqnarray*}
  \item[$M$ is an erasure]
    \begin{eqnarray*}
      (\weak{z}{M})\isub{(\weak{y}{N})}{x}  &=& \weak{z}{M\isub{(\weak{y}{N})}{x}} \\
      &\simeq&   \weak{z}{(\weak{y}{M\isub{N}{x})}} \\
      &\equiv& \weak{y}{(\weak{z}{M\isub{N}{x})}}
    \end{eqnarray*}
  \item[$M$ is a duplication] $z\neq x$.
    \begin{eqnarray*}
      (\cont{z}{z_1}{z_2}{M})\isub{(\weak{y}{N})}{x}  &=&  \cont{z}{z_1}{z_2}{(M\isub{(\weak{y}{N})}{x})}\\
      &\simeq& \cont{z}{z_1}{z_2}{\weak{y}{M\isub{N}{x}}}\\
      &"->"& \weak{y}{\cont{z}{z_1}{z_2}{M\isub{N}{x}}}
    \end{eqnarray*}
  \item[$M$ is a duplication]
    \begin{eqnarray*}
    (\cont{x}{x_1}{x_2}{M})\isub{(\weak{y}{N})}{x}  &=&  \cont{Fv(N)}{Fv(N_1)}{Fv(N_2)}
    M[\weak{y}{N_1}/x_1,\weak{y}{N_2}/x_2]\\
    &\simeq& \cont{Fv(N)}{Fv(N_1)}{Fv(N_2)}\weak{y}M[N_1/x_1,N_2/x_2]\\
    &"->"& \weak{y}{\cont{Fv(N)}{Fv(N_1)}{Fv(N_2)} M[N_1/x_1,N_2/x_2]}.
    \end{eqnarray*}
  \end{description}
\compier{I should redo the proof with parallel substitution.
Please comment.}
\end{proof}
} } \pierre{\begin{exa}[$(`l x. x (\weak{x}{y}))z$]\label{ex:weak-sub} The notation
  $(\weak{x}{M})N "->"\weak{\{x\}\setminus Fv(N)}{MN}$ is a shorthand for two rules:
\begin{eqnarray*}
  (\weak{x}{M})N &"->"& \weak{x}{MN} \quad \textrm{if~} x`;FV(N)\\
  (\weak{x}{M})N &"->"& {MN} \quad \textrm{if~} x`:FV(N)\\
\end{eqnarray*}
Similarly $(\weak{y}{M})\isub{N}{x} \triangleq   \weak{\{y\} \setminus
  Fv(N)}{M\isub{N}{x}}$ is a shorthand for
\begin{eqnarray*}
  (\weak{y}{M})\isub{N}{x} &\triangleq  & \weak{y}{M\isub{N}{x}},  \textrm{if~} x`;FV(N)\\
  (\weak{y}{M})\isub{N}{x} &\triangleq  & {M\isub{N}{x}},  \textrm{if~} x`:FV(N)\\
\end{eqnarray*}

Let us illustrate the
subtlety of rules $(`w_1)$ and $(`w_2)$ dealing with $\odot$ on an example in which
$\mathsf{w}`:\mathcal{R}$. 
\begin{eqnarray*}
  (`l x. x (\weak{x}{y}))z &"-{^{`b}}>"& (x (\weak{x}{y}))[z/x]\\
  &=& z (\weak{z}{y})\\
  &"-{^{`w_3}}>"& z y.
\end{eqnarray*}
But we get also:
\begin{eqnarray*}
  (`l x. x (\weak{x}{y}))z &"-{^{`w_3}}>"& (`l . x y) z\\
&"-{^{`b}}>"& z y
\end{eqnarray*}
which is not surprising since $\lR$ is confluent.
\end{exa}
}

\subsection{The comparison of the resource control cube and the prismoid of resources}
\label{sec:vs}

In this subsection, we compare the two corresponding substructures of the resource control cube and the prismoid of resources, namely
$ND$-base of the cube and implicit base of the prismoid.

First, we will focus on differences between the particular elements of the two systems, and after that we will prove that they are operationally equivalent. For the sake of uniformity, we will use our notation to write also the terms of the prismoid.

The first difference is in the definition of free variables of the cube ($Fv(M)$), free variables of the prismoid ($fv(M)$) and positive
free variables of the prismoid ($fv^{+}(M)$). The relation among the three is $fv^{+}(M) \subseteq Fv(M) \subseteq fv(M)$, which will be illustrated by the following example.
\begin{exa} Let $M \equiv \cont{x}{x_1}{x_2}{\cont{x_1}{x_3}{x_4}{y}}$, which is a (well-formed) term when $\mathcal{R}=\{\co\}$. $Fv(M)=\{y\}$ and $fv^{+}(M)=\{y\}$, while $fv(M)=\{x,y\}$. Further, let $N \equiv \cont{x}{x_1}{x_2}{\cont{x_1}{x_3}{x_4}{\weak{x_2}{\weak{x_3}{\weak{x_4}{y}}}}}$, which is a (well-formed) term when $\mathcal{R}=\{\co,\w\}$. Now, $Fv(N)=\{x,y\}$ and $fv(N)=\{x,y\}$, while $fv^{+}(N)=\{y\}$.
\end{exa}

The next difference is in the notion of substitution of the cube ($M\isub{N}{x}$) and the corresponding notion in the prismoid's implicit base ($M\{N /x\}$). We will distinguish cases for the particular subclasses of terms.

\begin{itemize}
\item
In the case of $\lc$-terms, the substitutions differ when $M \equiv \cont{x}{x_1}{x_2}{M_1}$.\\
Let $M \equiv \cont{x}{x_1}{x_2}{y}$. Then, in the cube
\begin{center}
$(\cont{x}{x_1}{x_2}{y})\isub{N}{x}\;\triangleq \; \cont{Fv(N)}{Fv(N_1)}{Fv(N_2)}{y[N_1/x_1,N_2/x_2]}\;=\;\cont{Fv(N)}{Fv(N_1)}{Fv(N_2)}{y}$.
\end{center}
On the other side, in the prismoid, since $x \notin fv^{+}(\cont{x}{x_1}{x_2}{y})$ and $\w \notin \mathcal{B}$, we have:
\begin{center}
$(\cont{x}{x_1}{x_2}{y})\{N/x\}\;:=\;del_x(\cont{x}{x_1}{x_2}{y})\;=\;y$.
\end{center}
Since $\cont{Fv(N)}{Fv(N_1)}{Fv(N_2)}{y}$ reduces to $y$ by several $(\gamma_0)$ reductions, we conclude that in this case substitution in the cube is "smaller" then the one in the prismoid.\\
Now, let $M \equiv \cont{x}{x_1}{x_2}{x_1y}$. Then, in the cube
\begin{center}
$(\cont{x}{x_1}{x_2}{x_1y})\isub{N}{x}\;\triangleq \; \cont{Fv(N)}{Fv(N_1)}{Fv(N_2)}{(x_1y)[N_1/x_1,N_2/x_2]}\;=\;\cont{Fv(N)}{Fv(N_1)}{Fv(N_2)}{(N_1y)}$.
\end{center}
In the prismoid, since $x \in fv^{+}(\cont{x}{x_1}{x_2}{x_1y})$ (more precisely $|\cont{x}{x_1}{x_2}{x_1y}|_x^{+}=1$), we have:
\begin{center}
$(\cont{x}{x_1}{x_2}{x_1y})\{N/x\}\;:=\;del_x(\cont{x}{x_1}{x_2}{x_1y})\{\{N/x\}\}\;=\;\cont{fv(N)}{fv(N_1)}{fv(N_2)}{(x_1y){N_1/x_1}{N_2/x_2}} \;=\;\cont{fv(N)}{fv(N_1)}{fv(N_2)}{(N_1y)}$.
\end{center}
Now, since we already showed that $Fv(N) \subseteq fv(N)$, there are cases where the result of the substitution in the prismoid has more contractions then the corresponding one in the cube. To be completely precise, let $N \equiv \cont{z}{z'}{z''}{w}$ (yielding that $N_1 \equiv \cont{z_1}{z'}{z''}{w_1}$ and $N_2 \equiv \cont{z_2}{z'}{z''}{w_2}$). Now, in the cube $Fv(N)=\{w\}$, while in the prismoid $fv(N)=\{z,w\}$. So,
\begin{center}
$(\cont{x}{x_1}{x_2}{x_1y})\isub{\cont{z}{z'}{z''}{w}}{x}\;\triangleq \; \cont{w}{w_1}{w_2}{((\cont{z_1}{z'}{z''}{w_1})y)}$,
\end{center}
while on the other side
\begin{center}
$(\cont{x}{x_1}{x_2}{x_1y})\{N/x\}\;:=\;\cont{z}{z_1}{z_2}{\cont{w}{w_1}{w_2}{((\cont{z_1}{z'}{z''}{w_1})y)}}$.
\end{center}
But the latter reduces to the former term by one $CG_c$ reduction, hence in this case we conclude that substitution in the prismoid is "smaller" then the one in the cube.\\
\item In the case of $\lw$-terms, the substitutions differ when $M \equiv \lambda y.M_1$.\\
Let $|M_1|^{+}_x \geq 2$, for example $M \equiv \lambda y.((\weak{x}{y})x)x$. Then, in the cube:
\begin{center}
$(\lambda y.((\weak{x}{y})x)x)\isub{N}{x}\;\triangleq \; \lambda y.((\weak{Fv(N) \setminus Fv(y)}{y})N)N$,
\end{center}
while in the prismoid:
\begin{center}
$(\lambda y.((\weak{x}{y})x)x)\{N/x\}\;:=\;((\lambda y.((\weak{x}{y})z)x)\{N/z\})\{N/x\}\;=\;(\mathrm{del}_z(\lambda y.((\weak{x}{y})z)x)\{\{N/z\}\})\{N/x\}\;=\;(\lambda y.((\weak{x}{y})N)x)\{N/x\}\;=\mathrm{del}_x(\lambda y.((\weak{x}{y})N)x)\{\{N/x\}\}\;=(\lambda y.(yN)x)\{\{N/x\}\}\;=\;((\lambda y.y)N)N$.
\end{center}
So, in the prismoid, we do not substitute for the variable that is introduced by weakening (those non-positive occurrences of a variable are deleted during substitution execution, using $\mathrm{del}_x)$ operator). Therefore, the result of the substitution in the cube can have some extra weakenings comparing to the corresponding term in the prismoid. But, $\lambda y.((\weak{Fv(N) \setminus Fv(y)}{y})N)N$ reduces to $((\lambda y.y)N)N$ by a number of $(\omega_2)$ reductions\footnote{In the analogous case $M \equiv \lambda y.(x(\weak{x}{y}))x$ we would use $(\omega_3)$ reductions for the same purpose}. Again, we conclude that the substitution in the cube is "smaller" in this case.
\item In the case of $\lo$ and $\lcw$ terms substitutions do not differ.
\end{itemize}
From the analysis presented above, we can conclude that although substitution definitions in particular cases differ, two systems work equally when we look the whole operational semantics, i.e. substitution + reductions + equivalencies. 
}

\subsection{Resource control sequent lambda calculi $\lGR$}

An attempt of creating the sequent-style
$\lambda$-calculus (i.e., the system corresponding to the Gentzen's
$LJ$ system) was proposed by Barendregt and Ghilezan in~\cite{bareghil00} by keeping the original syntax of $\lambda$-calculus and changing the
type assignment rules in accordance with the inference rules of $LJ$. The obtained simply typed $\lambda LJ$-calculus,
although useful for giving a new simpler proof of the
Cut-elimination theorem, was not in one-to-one correspondence with $LJ$.

Herbelin realised that, in order to achieve such a correspondence,
significant changes in syntax should be made. In~\cite{herb95b},
he proposed $\overline{\lambda}$-calculus with explicit
substitution, a new syntactic construct called a list and new
operators for creating and manipulating the lists. The role of the
lists was to overcome the key difference between natural deduction
and sequent calculi, namely the associativity of applications.
While in ordinary $\lambda$-calculus applications are
left-associated, i.e., $(\lambda x.M)N_1N_2...N_k \equiv
((((\lambda x.M)N_1)N_2)...)N_k$, in $\overline{\lambda}$-calculus the application is
right-associated, i.e., $(\lambda x.M)[N_1,N_2,...,N_k] \equiv (\lambda
x.M)(N_1(N_2...(N_{k-1}N_k)))$. The $\overline{\lambda}$-calculus was the
first formal calculus whose simply typed version extended the
Curry-Howard correspondence to the intuitionistic sequent
calculus, more precisely, its cut-free restriction $LJT^{cf}$.

Relying on Herbelin's work, Esp\'{\i}rito Santo and Pinto
developed $\lambda^{Jm}$-calculus with generalised multiary
application~\cite{santpint03} and later $\lG$-calculus~\cite{jesTLCA07}, the calculus that fully corresponds to Gentzen's
$LJ$. In the $\lG$-calculus one can also distinguish two kinds of expressions,
namely terms and contexts, the later being the generalisation of
the lists from the $\overline{\lambda}$-calculus.

In this section we present the syntax and the operational
semantics of the four sequent
calculi with explicit or implicit resource control, denoted by
$\lGR$, where $\mathcal{R}\subseteq \{\co, \w \}$ and $\co$, $\w$
denote explicit contraction and weakening, respectively. These
four calculi are sequent counterparts of the four resource control
calculi $\lR$ presented above, and represent extensions of
Esp\'{\i}rito Santo's $\lG$-calculus.

The abstract syntax of the $\lGR$ pre-expressions is the
following:
$$
\begin{array}{lcrcl}
\textrm{Pre-terms}    &  & f & ::= & x\,|\,\lambda
x.f\,|\,fc \,|\, \cont{x}{x_1}{x_2}{f} \,|\,\weak{x}{f}\\
\textrm{Pre-contexts} & & c & ::= & \bindx
f\,|\,f::c\,|\,\weak{x}{c} \,|\, \cont{x}{x_1}{x_2}{c}
\end{array}
$$
where $x, x_1, \ldots, y, \ldots$ range over a denumerable set of term variables.

A \emph{pre-term} can be a variable, an abstraction, a cut (an
application), a contraction or a weakening. Note that the
application is of the form $fc$. A \emph{pre-context} is one of the following features:
a selection $\bindx f$, which turns a term into a
context by choosing an active variable; a context constructor
$f::c$ (usually called cons) which expands a context by introducing
a term into the left-most position; a weakening on a pre-context
or a contraction on a pre-context. Pre-terms and pre-contexts are
together referred to as \emph{pre-expressions} and will be ranged
over by $E$. Pre-contexts $\weak{x}{c}$ and
$\cont{x}{x_1}{x_2}{c}$ behave exactly like the corresponding
pre-terms $\weak{x}{f}$ and $\cont{x}{x_1}{x_2}{f}$ in the untyped
calculi, so they will not be treated separately.

The set of free variables of a pre-expression $E$, denoted by
$Fv(E)$, is defined as follows:
$$\begin{array}{c}
Fv(x) = x;\quad Fv(\lambda x.\f) = Fv(\f)\setminus \{x\}; \quad
Fv(fc) = Fv(f) \cup Fv(c);\\
Fv(\bindx f) = Fv(f)\setminus \{x\};\quad
Fv(f::c) = Fv(f) \cup Fv(c);\\
\left[Fv(\weak{x}{E}) = \{x\} \cup Fv(E)\right]_{\w}; \\
\left[Fv(\cont{x}{x_1}{x_2}{E}) = \left\{
    \begin{array}{rl}
        \jel{Fv(E)}, & \{x_1,x_2\} \cap Fv(E) = \emptyset\\
        \{x\} \cup Fv(E)\setminus \{x_1,x_2\}, & \{x_1,x_2\} \cap
        Fv(E) \neq \emptyset
    \end{array}\right.
\right]_{\co}.
\end{array}$$

The sets of $\lGR$-expressions $\LGR = \TGR \cup \KGR$ (where
$\TGR$ are the sets of $\lGR$-terms and $\KGR$ are the sets of $\lGR$-contexts) are the subsets of the set of pre-expressions, defined by
the inference rules given in Figure~\ref{fig:wf}.

We denote terms by
$t,u,v...$, contexts by $k,k',...$ and expressions by $e,e'$.

\begin{figure}[htpb]
\centerline{ \framebox{ $
    \begin{array}{c}
    \\
    \infer{x \in \TGR}{}
    \\\\
      \begin{array}{c@{\qquad\qquad}c}
        \infer{\lambda x.f \in \TGR}
        {f \in \TGR \;\; [x \in Fv(\f)]_{\w}} &
        \infer{fc \in \TGR}
        {f \in \TGR\;\; c \in \KGR \;\; [Fv(f) \cap Fv(c) = \emptyset]_{\co}}
      \end{array}
      \\\\
     \begin{array}{c@{\qquad}c}
      \infer{\bindx f \in \KGR}
      {f \in \TGR\;\;\;[x \in Fv(f)]_{\w}}&
      \infer{f::c \in \KGR}
      {f \in \TGR\;\;\;c \in \KGR\;\;\; [Fv(f)\cap Fv(c) = \emptyset]_{\co}}
    \end{array}
    \\\\
   \begin{array}{c@{\qquad}c}
      \infer[(\w \in \mathcal{R})]{\weak{x}{f} \in \TGR}
      {f \in \TGR \;\; x \notin Fv(f)}&
       \infer[(\w \in \mathcal{R})]{\weak{x}{c} \in \KGR}
      {c \in \KGR \;\; x \notin Fv(c)}
       \end{array}
      \\\\
      \begin{array}{c} 
      \infer[(\co \in \mathcal{R})]{\cont{x}{x_1}{x_2}{f} \in \TGR}
      {f \in \TGR \;\; x_{1} \neq x_{2}\;\;
      x \notin Fv(f) \setminus \{x_{1}, x_{2}\}  \;\; [x_1,x_2 \in Fv(f)]_{\w}}
    \end{array}
    \\\\
      \begin{array}{c} 
      \infer[(\co \in \mathcal{R})]{\cont{x}{x_1}{x_2}{c} \in \KGR}
      {c \in \KGR \;\; x_{1} \neq x_{2}\;\;
      x \notin Fv(c) \setminus \{x_{1}, x_{2}\}  \;\; [x_1,x_2 \in Fv(c)]_{\w}}\\ \\
    \end{array}
  \end{array}
$ }} \caption{$\lGR$-expressions}
\label{fig:wf}
\end{figure}

\begin{exa}
\begin{itemize}
\item[-] $\lambda x.x(y::\bindz z)$ belongs to all four sets $\LGo$, $\LGc$, $\LGw$ and $\LGcw$;
\item[-] $\lambda x.w(y::\bindz z)$ belongs to $\LGo$ and $\LGc$;
\item[-] $\lambda x.x(x::\bindz z)$ belongs to $\LGo$ and $\LGw$;
\item[-] $\lambda x.y(y::\bindz z)$ belongs only to the $\LGo$;
\item[-] $\lambda x.\weak{x}{y(y::\bindz z)}$ belongs only to $\LGw$;
\item[-] $\lambda x.\cont{y}{y_1}{y_2}{y_1(y_2::\bindz z)}$ belongs only to
$\LGc$;
\item[-] $\lambda x.\weak{x}{\cont{y}{y_1}{y_2}{y_1(y_2::\bindz z)}}$
belongs only to $\LGcw$.
\end{itemize}
\end{exa}

The inductive definition  of the meta operator of implicit substitution $e\isub{t}{x}$, representing the substitution of free variables, is given in Figure~\ref{fig:sub-lGR}.
In this definition, in case $\co \in \mathcal{R}$, the following condition must be satisfied:
$$Fv(e) \cap Fv(t) = \emptyset,$$ otherwise the substitution result would not be a well-formed term.
In the same definition, terms $t_1$ and $t_2$ are obtained from $t$ by renaming all free variables in
$t$ by fresh variables.

\begin{figure}[hbtp]
\centerline{ \framebox{ $
\begin{array}{rclrcl}\\
x\isub{t}{x} & \triangleq & t &
y\isub{t}{x} & \triangleq & y\\
(\lambda y.v)\isub{t}{x} & \triangleq  & \lambda y.v\isub{t}{x},
\;\;x \neq y
& (\weak{y}{e})\isub{t}{x} &\triangleq  & \weak{\{y\} \setminus Fv(t)}{e\isub{t}{x}}, \;\;x \neq y \\
(vk)\isub{t}{x} & \triangleq  & v\isub{t}{x}\, k\isub{t}{x}&
(\weak{x}{e})\isub{t}{x} & \triangleq  & \weak{Fv(t) \setminus Fv(e)}{e}\\
(v::k)\isub{t}{x}& \triangleq &v\isub{t}{x}::k\isub{t}{x}&
(\cont{y}{y_1}{y_2}{e})\isub{t}{x} & \triangleq  &
\cont{y}{y_1}{y_2}{e\isub{t}{x}}, \;\;x \neq y\\
%
(\bindy v)\isub{t}{x} & \triangleq & \bindy v\isub{t}{x} &
(\cont{x}{x_1}{x_2}{e})\isub{t}{x} & \triangleq  &
\cont{Fv(t)}{Fv(t_1)}{Fv(t_2)}{e\isub{t_1}{x_1}\isub{t_2}{x_2}}\\
\\
\end{array}
$ }} \caption{Substitution in $\lGR$-calculi} \label{fig:sub-lGR}
\end{figure}


The computation over the set of $\lGR$-expressions reflects the
cut-elimination process, and manages the explicit/implicit
resource control operators. Four groups of reductions in
$\lGR$-calculi are given in Figure~\ref{fig:red-lGR}, while
the equivalences are given in Figure~\ref{fig:equiv-lGR}.
Reduction rules and equivalences specific to each of the
four term calculi forming the ``LJ base'' of the resource control
cube are given in Figure~\ref{fig:LJ-base}.

\begin{figure}[hbtp]
\centerline{ \framebox{ $
\begin{array}{rrcl}\\
(\beta) & (\lambda x.t)(u::k) & \rightarrow & u(\bindx tk) \\
(\sigma) & t(\bindx v)              & \rightarrow & v\isub{t}{x}\\
(\pi) & (tk)k'                    & \rightarrow & t(\append{k}{k'})\\
(\mu) & \bindx xk             & \rightarrow & k\\[2mm]
(\gamma_0)       & \cont{x}{x_1}{x_2}{y} & \rightarrow & y \quad y\not= x_1,x_2\\
(\gamma^{\;\;'}_0)       & \cont{x}{x_1}{x_2}{x_1} & \rightarrow & x \\
(\gamma_1)       & \cont{x}{x_1}{x_2}{(\lambda y.t)} & \rightarrow
& \lambda y.\cont{x}{x_1}{x_2}{t}\\
(\gamma_2)       & \cont{x}{x_1}{x_2}{(tk)} & \rightarrow & (\cont{x}{x_1}{x_2}{t})k, \;\;\mbox{if} \; x_1,x_2 \notin Fv(k)\\
(\gamma_3)       & \cont{x}{x_1}{x_2}{(tk)} & \rightarrow & t(\cont{x}{x_1}{x_2}{k}), \;\;\mbox{if} \; x_1,x_2 \notin Fv(t)\\
(\gamma_4)       & \cont{x}{x_1}{x_2}{(\bindy t)} & \rightarrow & \bindy (\cont{x}{x_1}{x_2}{t})\\
(\gamma_5)       & \cont{x}{x_1}{x_2}{(t::k)} & \rightarrow & (\cont{x}{x_1}{x_2}{t})::k, \;\;\mbox{if} \; x_1,x_2 \notin Fv(k)\\
(\gamma_6)       & \cont{x}{x_1}{x_2}{(t::k)} & \rightarrow & t::(\cont{x}{x_1}{x_2}{k}), \;\;\mbox{if} \; x_1,x_2 \notin Fv(t)\\[2mm]
(\omega_1)       & \lambda x.(\weak{y}{t}) & \rightarrow & \weak{y}{(\lambda x.t)},\;\;x \neq y\\
(\omega_2)       & (\weak{x}{t})k & \rightarrow & \weak{\{x\}\setminus Fv(k)}{(tk)}\\
(\omega_3)       & t(\weak{x}{k}) & \rightarrow & \weak{\{x\}\setminus Fv(t)}{(tk)}\\
(\omega_4)       & \bindx (\weak{y}{t}) & \rightarrow & \weak{y}{(\bindx t)},\;\;x \neq y\\
(\omega_5)       & (\weak{x}{t})::k & \rightarrow & \weak{\{x\}\setminus Fv(k)}{(t::k)}\\
(\omega_6)       & t::(\weak{x}{k}) & \rightarrow & \weak{\{x\}\setminus Fv(t)}{(t::k)}\\[2mm]
(\gamma \omega_1)       & \cont{x}{x_1}{x_2}{(\weak{y}{e})} &
\rightarrow & \weak{y}{(\cont{x}{x_1}{x_2}{e})}  \qquad x_1\neq y
\neq x_2 \\
(\gamma \omega_2)       & \cont{x}{x_1}{x_2}{(\weak{x_1}{e})} &
\rightarrow & e\isub{x}{x_{2}}
\\ \\
\end{array}
$ }} \caption{Reduction rules of $\lGR$-calculi}
\label{fig:red-lGR}
\end{figure}

\begin{figure}[hbtp]
\centerline{ \framebox{ $
\begin{array}{lrcl}\\
(`e_1) & \weak{x}{(\weak{y}{e})} & \equiv & \weak{y}{(\weak{x}{e})} \\
(`e_2) & \cont{x}{x_1}{x_2}{e} & \equiv & \cont{x}{x_2}{x_1}{e}\\
(`e_3) & \cont{x}{y}{z}{(\cont{y}{u}{v}{e})} & \equiv  &
    \cont{x}{y}{u}{(\cont{y}{z}{v}{e})} \\
(`e_4) & \cont{x}{x_1}{x_2}{(\cont{y}{y_1}{y_2}{e})} &
    \equiv & \cont{y}{y_1}{y_2}{(\cont{x}{x_1}{x_2}{e})},\;\; x \neq y_1,y_2, \; y \neq x_1,x_2 \\ \\
\end{array}
$ }} \caption{Equivalences in $\lGR$-calculi}
\label{fig:equiv-lGR}
\end{figure}

\begin{figure}[hbtp]
\begin{tabular}{c|c|c}
$\lGR$-calculi & reduction rules  & \jel{equivalences}\\ \hline $\lGo$ & $\beta$, $\pi$, $\sigma$, $\mu$  \\
\hline  $\lGc$ & $\beta$, $\pi$, $\sigma$, $\mu$, $\gamma_0$ - $\gamma_6$ & $`e_2,`e_3,`e_4$\\
\hline  $\lGw$ & $\beta$, $\pi$, $\sigma$, $\mu$, $\omega_1$ - $\omega_6$ & $`e_1$\\
\hline  $\lGcw$ & $\beta$, $\pi$, $\sigma$, $\mu$, $\gamma_1$ -
$\gamma_6$, $\omega_1$ - $\omega_6$, $\gamma\omega_1$,
$\gamma\omega_2$ & $`e_1$ - $`e_4 $
\end{tabular}
\caption{LJ base of the resource control cube} \label{fig:LJ-base}
\end{figure}

\pierre{The first group consists of $(\beta)$, $(\pi)$, $(\sigma)$ and
$(\mu)$ reductions that exist in all four $\lGR$-calculi.  
$(\beta)$~reduction is the main computational step. In particular ($`b$) 
creates a potential substitution, which is made actual by $(\sigma)$. In that sense,
substitutions are controlled (i.e. can be 
delayed) although they are implicit. Note that this feature is
not present in $\lR$-calculi since it is a consequence of the
existence of contexts. For more details see~\cite{jesRTA07}.
Combination $(\beta) + (\sigma)$ is the
traditional $(\beta)$ in $\lambda$-calculus. 
Rule $(\pi)$ simplifies the head of a cut ($t$ is the head of
$t k$). Rule $(\mu)$ erases the sequence made of a trivial cut
(a cut is trivial if its head is a variable) followed by the
selection of the same variable. ($`m$) is therefore a
kind of garbage collection.

In $(\pi)$ rule, the meta-operator $@$, called \emph{append},
joins two contexts and is defined as:
$$
\begin{array}{rclcrcl}
       \app{(\bindx t)}{k'} & = & \bindx tk'                        & \qquad & \app{(u::k)}{k'} & = & u::(\app{k}{k'})\\
\app{(\weak{x}{k})}{k'} & = & \weak{x}{(\app{k}{k'})} & \qquad &
\app{(\cont{x}{y}{z}{k})}{k'} & = & \cont{x}{y}{z}{(\app{k}{k'})}.
\end{array}
$$

If $\co \in \mathcal{R}$,
the group $(\gamma)$ in $\lGR$ has  three new reductions
which handle the interaction between contraction and selection and context
construction.  If $\w \in \mathcal{R}$, the group $(\omega)$ in $\lGR$   has three new
reductions which handle the interaction between weakening and selection and context construction.
Finally, the group  $(\gamma \omega)$ in $\lGR$ has two new rules which handle
in contexts the interaction between explicit resource operators  of several nature.

We have presented the syntax and the reduction rules of $\lGR$, $\mathcal{R}\subseteq
\{\co, \w \}$, a family of four intuitionistic sequent term calculi obtained by
instantiating $\mathcal{R}$.  $\mathcal{R} = \emptyset$ gives the well-known
\emph{lambda Gentzen calculus} $\lG$, proposed by Esp\'{\i}rito
Santo~\cite{jesTLCA07}, whose simply typed version corresponds to the intuitionistic
sequent calculus with cut and implicit structural rules, through the Curry-Howard
correspondence. $\mathcal{R} = \{\co, \w \}$, gives the \emph{resource control lambda
  Gentzen calculus}, $\llG$, whose call-by-value version was proposed and
investigated in ~\cite{ghilivetlesczuni11}.  Simply typed $\llG$ extends the
Curry-Howard correspondence to the intuitionistic sequent calculus with explicit
structural rules, namely weakening and contraction. Finally, $\mathcal{R} = \{\co \}$
and $\mathcal{R} = \{ \w \}$ give two new calculi, namely $\lGc$ and $\lGw$. Those
calculi could be related to substructural logics, as this will be elaborated in the
sequel.}

\pierre{
\subsection{Strong normalisation and substitution}
\label{sec:sn_subs}

Since the definition of substitution in $\lR$ and in $\lGR$ is not classical, it is
worth proving that if a term in which other terms have been substituted is strongly
normalising, the term itself is strongly normalising.  
First let us prove a substitution lemma.  Before we need another lemma on
substitutions.
\begin{lem}\label{lem:xnotfree}
  If $x`;Fv(M)$ then $M\isub{N}{x} =M$.
\end{lem}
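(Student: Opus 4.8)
The plan is to prove the statement by structural induction on the pre-term $M$, reading off the defining clauses of substitution from Figure~\ref{fig:sub-lR} and using both the inductive description of $Fv$ and Barendregt's convention (so that $x$ differs from every variable bound in $M$).

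First I would settle the cases in which the top constructor neither binds nor introduces a variable equal to $x$. If $M=y$ is a variable, then $x\notin Fv(y)=\{y\}$ forces $x\neq y$, so the clause $y\isub{N}{x}=y$ applies; the active clause $x\isub{N}{x}=N$ cannot fire, as it would require $x\in Fv(M)$. If $M=\lambda y.M'$, Barendregt's convention gives $x\neq y$, and from $x\notin Fv(M)=Fv(M')\setminus\{y\}$ together with $x\neq y$ I obtain $x\notin Fv(M')$; the induction hypothesis then yields $M'\isub{N}{x}=M'$, whence $M\isub{N}{x}=\lambda y.M'=M$. The application case $M=M_1M_2$ is identical, using $x\notin Fv(M_1)\cup Fv(M_2)$ to feed the hypothesis to each factor. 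For a commuting weakening $\weak{y}{M'}$ or contraction $\cont{y}{y_1}{y_2}{M'}$ with $y\neq x$, I would again reduce $x\notin Fv(M)$ to $x\notin Fv(M')$ through the corresponding $Fv$-equation --- in the contraction case combining $x\notin Fv(M')\setminus\{y_1,y_2\}$ with $x\neq y_1,y_2$ (Barendregt) to get $x\notin Fv(M')$ --- and close by the induction hypothesis.

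The heart of the argument is to show that no clause consuming a \emph{free} $x$ is ever triggered under the hypothesis $x\notin Fv(M)$. The three such clauses are $x\isub{N}{x}=N$, the weakening clause $(\weak{x}{M'})\isub{N}{x}=\weak{Fv(N)\setminus Fv(M')}{M'}$, and the contraction clause $(\cont{x}{x_1}{x_2}{M'})\isub{N}{x}=\cont{Fv[N]}{Fv[N_1]}{Fv[N_2]}{M'[N_1/x_1,N_2/x_2]}$. For a weakening $\weak{x}{M'}$ one has $x\in Fv(M)$ directly from the definition of $Fv$, so the hypothesis excludes this pattern; likewise, for $\cont{x}{x_1}{x_2}{M'}$ the introduced variable $x$ is free precisely when $x_1$ or $x_2$ is free in $M'$, so whenever this clause would genuinely substitute we again have $x\in Fv(M)$, contradicting the hypothesis. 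The only remaining configuration is the degenerate contraction $\cont{x}{x_1}{x_2}{M'}$ with $x_1,x_2\notin Fv(M')$, where $x$ is syntactically present as the introduced name but is not a free occurrence; this must be treated like the variable case $y\isub{N}{x}$ and returned unchanged, rather than by the active contraction clause.

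I expect this degenerate contraction, together with the bookkeeping of the set operation $\{y\}\setminus Fv(N)$ in the commuting weakening clause, to be the only real obstacle: one must ensure these ``garbage-collecting'' set differences do not spuriously erase part of $M$. This is where the standing well-formedness requirement $[x\in Fv(M)]_{\w}$ of substitution is decisive, since in the calculi carrying explicit weakening it is incompatible with the hypothesis $x\notin Fv(M)$, so the weakening clauses are never reached with a non-occurring $x$; in the calculi without weakening those clauses are absent altogether, leaving only the commuting and degenerate cases, which the induction hypothesis and the analysis above dispose of.
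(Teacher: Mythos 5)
Your proof is correct and follows essentially the same route as the paper's: structural induction on $M$, with the key observation that under $x \notin Fv(M)$ only the commuting ($x \neq y$) weakening and contraction clauses of Figure~\ref{fig:sub-lR} can fire, so the induction hypothesis closes every case. The paper's own proof is a one-line record of exactly this observation; your extra discussion of the degenerate contraction and of the $\{y\}\setminus Fv(N)$ bookkeeping handles edge cases that the paper silently discharges via its standing preconditions on substitution, namely $x \notin Bv(M)$ and $[x \in Fv(M)]_{\w}$.
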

\begin{proof}
  The proof woks by induction after noticing that only definitions:
 \begin{eqnarray*}
   (\weak{y}{M})\isub{N}{x} &\triangleq  & \weak{\{y\} \setminus
     Fv(N)}{M\isub{N}{x}}, \;\;x \neq y\\
 (\cont{y}{y_1}{y_2}{M})\isub{N}{x} & \triangleq  &
\cont{y}{y_1}{y_2}{M\isub{N}{x}}, \;\;x \neq y
 \end{eqnarray*}
apply for weakening and contraction.
\end{proof}
\begin{lem}[Substitution Lemma]
  $M\isub{N}{x}\isub{P}{y} = M\isub{P}{y}\isub{N\isub{P}{y}}{x}$.
\end{lem}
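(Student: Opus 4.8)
The plan is to prove the identity by structural induction on $M$. Throughout I would invoke Barendregt's convention, as adopted just before the statement, to secure the two side conditions of the classical substitution lemma, namely $x \neq y$ and $x \notin Fv(P)$; the binder reading of $\isub{N}{x}$ makes both available. I would also use the preceding Lemma~\ref{lem:xnotfree} repeatedly: whenever a variable is not free in a subterm, the corresponding substitution acts as the identity. Since the set-indexed weakenings $\weak{X}{M}$ are determined only up to the equivalence $(\varepsilon_1)$ of Figure~\ref{fig:equiv-lR} (and the contraction-indexed families up to $(\varepsilon_2)$--$(\varepsilon_4)$), the equality in the statement is to be read modulo these structural equivalences, and I would make this explicit at the outset.

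The base cases are immediate. For $M = x$ both sides reduce to $N\isub{P}{y}$; for $M = y$ the left side gives $P$ and the right side gives $P\isub{N\isub{P}{y}}{x}$, which equals $P$ by Lemma~\ref{lem:xnotfree} since $x \notin Fv(P)$; for any other variable $z$ both sides are $z$. The abstraction and application cases are routine: using $x \neq y$ one pushes both substitutions under $\lambda z$ (with $z$ fresh by convention, so that none of the clauses of Figure~\ref{fig:sub-lR} collide) or distributes them over the two immediate subterms of an application, and then applies the induction hypothesis.

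The crux lies in the weakening and contraction clauses, and in particular in the subcases where the operator variable coincides with $x$, because there the defining clauses do not merely propagate the substitution but create or rename resource operators. For the subcase $M = \weak{x}{M_0}$, the left-hand side first produces $\weak{Fv(N)\setminus Fv(M_0)}{M_0}$ and then applies $\isub{P}{y}$, whereas the right-hand side produces $\weak{Fv(N\isub{P}{y})\setminus Fv(M_0\isub{P}{y})}{M_0\isub{P}{y}}$; matching the two requires the auxiliary fact that $Fv(N\isub{P}{y}) = (Fv(N)\setminus\{y\})\cup Fv(P)$ when $y \in Fv(N)$ (and $Fv(N\isub{P}{y}) = Fv(N)$ otherwise, by Lemma~\ref{lem:xnotfree}), together with set arithmetic carried out modulo $(\varepsilon_1)$ --- keeping track of the subtlety that a weakening on a variable of $Fv(P)$ is absorbed once that variable becomes genuinely free. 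The subcase $M = \cont{x}{x_1}{x_2}{M_0}$ is the hardest: the defining clause introduces a renaming of $N$ into fresh copies $N_1,N_2$, a contraction family indexed by $Fv[N]$, and a parallel substitution $M_0[N_1/x_1,N_2/x_2]$, and one must show that applying $\isub{P}{y}$ afterwards agrees with first substituting $P$ for $y$ and only then renaming and performing the parallel substitution with the copies of $N\isub{P}{y}$.

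Therefore the main obstacle I anticipate is organising the contraction case: it calls for a companion substitution lemma for the parallel substitution $M_0[N_1/x_1,N_2/x_2]$ and for the commutation of variable renaming with $\isub{P}{y}$, so that the freshly generated copies and the contraction index $Fv[N\isub{P}{y}]$ line up with those obtained on the other side. I would isolate these as preliminary commutation lemmas (renaming versus substitution, and a parallel-substitution analogue of the present statement) and prove them first, again by induction on $M_0$ and again modulo the equivalences of Figure~\ref{fig:equiv-lR}; with these in hand the contraction and weakening cases reduce to bookkeeping of free-variable sets, and the induction closes uniformly for every $\mathcal{R} \subseteq \{\co,\w\}$, since the bracketed clauses are simply absent when the corresponding operator is implicit.
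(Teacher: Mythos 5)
Your proposal follows essentially the same route as the paper's proof: a structural induction on $M$ relying on Barendregt's convention (giving $x\neq y$, $x\notin Fv(P)$, $y\notin Fv(P)$) and on Lemma~\ref{lem:xnotfree}, with the routine variable/abstraction/application cases dispatched quickly and the real work concentrated in the $\weak{x}{M_0}$ case (via exactly the free-variable computation $Fv(N\isub{P}{y})=(Fv(N)\setminus\{y\})\cup Fv(P)$ that the paper carries out) and the $\cont{x}{x_1}{x_2}{M_0}$ case. The only divergence is organisational: where you would isolate the renaming-versus-substitution commutation and a parallel-substitution analogue as preliminary lemmas, the paper compresses this into two sequential applications of the induction hypothesis to the fresh copies $N_1,N_2$ (exploiting that their free variables are fresh, so $N_i\isub{P}{y}=N_i$), so your plan is, if anything, more explicit about bookkeeping the paper leaves implicit.
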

\begin{proof}
  Notice first the traditional convention that $x`;Fv(N)$, $x`;Fv(P)$ and $y`;Fv(P)$,
  which plays obviously an essential role in the proof.

The proof is by induction on the structure of the terms and most of the cases are
classical in $`l$-calculus. We are going only to consider the cases which are
specific to $\lR$.
\begin{itemize}
\item $M`=\weak{z}{Q}$ with $x$, $y$ and $z$ all different and $z`:Fv(N)$.  Then
  \begin{eqnarray*}
  (\weak{z}{Q})\isub{N}{x}\isub{P}{y} &=& Q\isub{N}{x}\isub{P}{y} \\
  &=&  Q\isub{P}{y}\isub{N\isub{P}{y}}{x}  \qquad \textrm{by induction}\\
&=&\weak{z}{(Q\isub{P}{y})}\isub{N\isub{P}{y}}{x}\\
&=& (\weak{z}{Q})\isub{P}{y}\isub{N\isub{P}{y}}{x}
\end{eqnarray*}
\item $M`=\weak{z}{Q}$ with $x$, $y$ and $z$ all different, $z`;Fv(N)$ and
  $z`:Fv(P)$. Then 
 \begin{eqnarray*}
  (\weak{z}{Q})\isub{N}{x}\isub{P}{y} &=& (\weak{z}{Q}\isub{N}{x})\isub{P}{y} \\
&=&Q\isub{N}{x}\isub{P}{y}\\
  &=&  Q\isub{P}{y}\isub{N\isub{P}{y}}{x}\\
&=&\weak{z}{(Q\isub{P}{y})}\isub{N\isub{P}{y}}{x} \qquad \textrm{by induction}\\
&=& (\weak{z}{Q})\isub{P}{y}\isub{N\isub{P}{y}}{x}
\end{eqnarray*}
\item $M`=\weak{z}{Q}$ with $x$, $y$ and $z$ all different, $z`;Fv(N)$ and
  $z`;Fv(P)$. Then 
\begin{eqnarray*}
  (\weak{z}{Q})\isub{N}{x}\isub{P}{y} &=& (\weak{z}{Q}\isub{N}{x})\isub{P}{y} \\
&=&\weak{z}{(Q\isub{N}{x}\isub{P}{y})}\\
  &=&\weak{z}{(Q \isub{P}{y}\isub{N\isub{P}{y}}{x})}\\
&=&\weak{z}{(Q\isub{P}{y})}\isub{N\isub{P}{y}}{x} \qquad \textrm{by induction}\\
&=& (\weak{z}{Q})\isub{P}{y}\isub{N\isub{P}{y}}{x}
\end{eqnarray*}
\item $M`=\weak{x}{Q}$.
  \begin{eqnarray*}
   (\weak{x}{Q})\isub{P}{y}\isub{N\isub{P}{y}}{x} &=&
   (\weak{x}{Q}\isub{P}{y})\isub{N\isub{P}{y}}{x} \\
   &=& \weak{(Fv(N\isub{P}{y}) \setminus Fv(Q\isub{P}{y}))}{(Q\isub{P}{y})}\\
   &=& \weak{(Fv(N)\cup Fv(P) \setminus y \setminus Fv(Q)\cup Fv(P) \setminus y)}{(Q\isub{P}{y})}\\
   &=& \weak{Fv(N) \setminus Fv(Q)}{(Q\isub{P}{y})}\\
   &=& (\weak{Fv(N) \setminus Fv(Q)}{Q})\isub{P}{y}\qquad \textrm{because~}
   x`;Fv(N)\\
   &=& (\weak{x}{Q})\isub{N}{x}\isub{P}{y}
  \end{eqnarray*}
\item $M`=\weak{y}{Q}$.
  \begin{eqnarray*}
       (\weak{y}{Q})\isub{P}{y}\isub{N\isub{P}{y}}{x}
       &=&(\weak{(Fv(P)\setminus Fv(Q))}{Q})\isub{N\isub{P}{y}}{x}\\
       &=& \weak{(Fv(P)\setminus Fv(Q))}{(Q\isub{N\isub{P}{y}}{x})}\\
       &=& \weak{(Fv(P)\setminus Fv(Q))}{(Q\isub{P}{y}\isub{N\isub{P}{y}}{x})} \qquad
       \textrm{by Lemma~\ref{lem:xnotfree} since~} y`;Fv(Q)\\
       &=& \weak{(Fv(P)\setminus Fv(Q))}{(Q\isub{N}{x}\isub{P}{y})} \qquad \textrm{by
         induction}\\
       &=& (\weak{y}{Q})\isub{N}{x}\isub{P}{y}
  \end{eqnarray*}
\item $M`=\cont{z}{z_1}{z_2}Q$ with $z$, $x$ and $y$ all different and different of
  $z_1$ and $z_2$ by convention on bound variables.
  \begin{eqnarray*}
    (\cont{z}{z_1}{z_2}Q)\isub{P}{y}\isub{N\isub{P}{y}}{x} &=&
    (\cont{z}{z_1}{z_2}Q\isub{P}{y})\isub{N\isub{P}{y}}{x}\\
    &=&  \cont{z}{z_1}{z_2}(Q\isub{P}{y}\isub{N\isub{P}{y}}{x})\\
    &=& \cont{z}{z_1}{z_2}(Q\isub{N}{x}\isub{P}{y}) \qquad \textrm{by induction}\\
  &=& (\cont{z}{z_1}{z_2}Q\isub{N}{x})\isub{P}{y}\\
  &=& (\cont{z}{z_1}{z_2}Q)\isub{N}{x}\isub{P}{y}.
  \end{eqnarray*}
\item $M`=\cont{x}{x_1}{x_2}Q$.
  \begin{eqnarray*}
    (\cont{x}{x_1}{x_2}Q)\isub{N}{x}\isub{P}{y} &=&
    (\cont{Fv(N)}{Fv(N_1)}{Fv(N_2)}Q)\isub{N_1}{x_2}\isub{N_2}{x_2})\isub{P}{y}\\ 
    &=&
    \cont{Fv(N)}{Fv(N_1)}{Fv(N_2)}(Q\isub{N_1}{x_1}\isub{N_2}{x_2})\isub{P}{y})\\ 
    &=&
    \cont{Fv(N)}{Fv(N_1)}{Fv(N_2)}(Q\isub{P}{y}\isub{N_1\isub{P}{y}}{x_1}\isub{N_2\isub{P}{y}}{x_2}) 
    \qquad \textrm{by induction} \\ 
    &=& (\cont{x}{x_1}{x_2}Q)\isub{P}{y}\isub{N\isub{P}{y}}{x}.
  \end{eqnarray*}
\item $M`=\cont{y}{y_1}{y_2}Q$.
  \begin{eqnarray*}
    (\cont{y}{y_1}{y_2}Q)\isub{N}{x}\isub{P}{y} &=&
  (\cont{y}{y_1}{y_2}Q\isub{N}{x})\isub{P}{y} \\
     &=&
    \cont{Fv(P)}{Fv(P_1)}{Fv(P_2)}(Q\isub{N}{x}\isub{P_1}{y_1}\isub{P_2}{y_2})\\ 
    &=&
   \cont{Fv(P)}{Fv(P_1)}{Fv(P_2)}(Q\isub{P_1}{y_1}\isub{P_2}{y_2}\isub{N\isub{P_1}{y_1}\isub{P_2}{y_2}}{x})\\ 
    && \qquad \qquad \textrm{by induction} \\ 
    &=& (\cont{y}{y_1}{y_2}Q)\isub{P}{y}\isub{N\isub{P}{y}}{y}.
  \end{eqnarray*}
\end{itemize}

\end{proof}

Recall the concept of \emph{strong normalisation}.

\begin{defi}
A $\lR$-term $M$ is called \emph{strongly normalising} if and only if all reduction sequences starting with $M$ terminate.  By a
  \emph{reduction sequence} we mean a sequence of terms \sil{$(M_i)_{i\ge
    0}$ such that $M_i \rightarrow M_{i+1}$ or $M_i `= M_{i+1}$.}
We denote the set of all strongly normalising $\lR$-terms with $\SNR$ and the set of all closed strongly normalising $\lR$-terms with $\SNc$.
\end{defi}

\begin{prop}
  If $M\isub{N}{x}`:\SNR$  then $M`:\SNR$.
\end{prop}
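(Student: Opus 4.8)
The plan is to prove the contrapositive, by showing that every reduction carried out inside $M$ can be simulated inside $M\isub{N}{x}$. The core of the argument is a \emph{substitutivity lemma}: if $M \rightarrow M'$ is a single reduction step then $M\isub{N}{x} \rightarrow^{+} M'\isub{N}{x}$, and if $M \equiv M'$ then $M\isub{N}{x} \equiv M'\isub{N}{x}$. Granting this, suppose $M \notin \SNR$, i.e.\ there is an infinite reduction sequence issued from $M$. Because the equivalences of Figure~\ref{fig:equiv-lR} only permute resource operators, every equivalence class of $\equiv$ is finite, so such a sequence must contain infinitely many genuine $\rightarrow$-steps. Applying the substitutivity lemma termwise turns each $\rightarrow$-step into at least one $\rightarrow$-step and each $\equiv$-step into an $\equiv$-step, producing an infinite reduction sequence from $M\isub{N}{x}$ with infinitely many $\rightarrow$-steps. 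This contradicts $M\isub{N}{x} \in \SNR$, hence $M \in \SNR$.

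I would establish the substitutivity lemma by induction on $M$ and case analysis on the applied rule. The compatible (congruence) cases are routine, since by the defining clauses in Figure~\ref{fig:sub-lR} the implicit substitution commutes with each term constructor. The purely structural root rules --- $(\gamma_0)$, $(\gamma_0')$, $(\gamma_1)$--$(\gamma_3)$, $(\omega_1)$--$(\omega_3)$ and $(\gamma\omega_1)$ --- commute with $\isub{N}{x}$ once one checks that their free-variable side conditions survive the substitution; this holds because Barendregt's convention keeps the bound and contracted variables of the redex out of $Fv(N)$. The two rules that create an implicit substitution, $(\beta)$ and $(\gamma\omega_2)$, are handled with the Substitution Lemma proved above, which lets one commute the freshly created substitution past the outer $\isub{N}{x}$. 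In every case the redex of $M$ survives as a redex of the same shape in $M\isub{N}{x}$, so at least one step is consumed; for instance, when $\isub{N}{x}$ instantiates the body variable of a $(\gamma_0)$-redex, the resulting meaningless contraction $\cont{x'}{x_1}{x_2}{N}$ still reduces in one or more $\gamma$-steps to $N$. Well-definedness of the reducts' substitutions is guaranteed by the substitution well-definedness proposition together with the fact that reduction leaves $Fv$ unchanged (a direct check on the rules).

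The main obstacle is the clause $(\cont{x}{x_1}{x_2}{M})\isub{N}{x} \triangleq \cont{Fv[N]}{Fv[N_1]}{Fv[N_2]}{M[N_1/x_1,N_2/x_2]}$: substituting for a contracted variable duplicates $N$ into renamed copies $N_1,N_2$ and wraps the result in fresh contractions over $Fv(N)$. When a redex sits under such a contraction in $M$, one must verify that reducing it can still be mirrored --- up to the contraction equivalences of Figure~\ref{fig:equiv-lR} --- by a reduction acting on both copies in the duplicated term, and that this is compatible with the parallel substitution $M[N_1/x_1,N_2/x_2]$ and with the $(\beta)$ and $(\gamma\omega_2)$ cases. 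Getting this bookkeeping of free variables and fresh names right is the delicate point on which the whole simulation rests.
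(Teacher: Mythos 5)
Your overall strategy (contrapositive via a substitutivity/simulation lemma) is a genuinely different route from the paper's proof, which argues directly by well-founded induction on the lexicographic product of reduction out of $M\isub{N}{x}$ and the subterm relation, analyzing case by case which rule can fire in $M$. However, your argument has a genuine gap: the simulation lemma in the strong form you state --- one step from $M$ yields \emph{at least one} step from $M\isub{N}{x}$ --- is false, and the sentence ``in every case the redex of $M$ survives as a redex of the same shape in $M\isub{N}{x}$'' is exactly where it breaks. Substitution in these calculi can erase the resource operator that constitutes the redex. Concretely, in $\lw$ take $M = (\weak{y}{z})x$ and $N = y$ (all variables distinct). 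Then $M \to_{\omega_2} \weak{y}{(zx)} = M'$, but by the clause $(\weak{y}{P})\isub{N}{x} \triangleq \weak{\{y\}\setminus Fv(N)}{P\isub{N}{x}}$ the weakening disappears on \emph{both} sides: $M\isub{y}{x} = zy = M'\isub{y}{x}$, so the step is simulated by zero steps. Note that Barendregt's convention does not exclude this, since $y$ is a \emph{free} variable of $M$, and in $\lw$ there is no disjointness requirement between $Fv(M)$ and $Fv(N)$. A second family of counterexamples arises in $\lc$ and $\lcw$ when $N$ is closed and one substitutes for the contraction variable of a $(\gamma_0)$- or $(\gamma\omega_2)$-redex: $(\cont{z}{z_1}{z_2}{y})\isub{N}{z} = \cont{Fv[N]}{Fv[N_1]}{Fv[N_2]}{y} = y$ because the wrapping contraction list is empty, so again $M\isub{N}{z} = M'\isub{N}{z}$.

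Because of these vanishing steps, only $M\isub{N}{x} \to^{*} M'\isub{N}{x}$ holds, and the contrapositive no longer follows: an infinite reduction sequence from $M$ could a priori be mapped to a sequence that is eventually stationary, which yields no contradiction with $M\isub{N}{x}\in\SNR$. To repair the proof you would need an additional argument that infinitely many of the simulated steps are genuine, for instance: $(\beta)$-steps are always simulated by exactly one $(\beta)$-step (using the Substitution Lemma to commute the created substitution past $\isub{N}{x}$), and the purely structural fragment ($\gamma$-, $\omega$- and $\gamma\omega$-rules alone) is terminating, so every infinite reduction sequence must contain infinitely many $(\beta)$-steps. That termination claim is itself an extra lemma (the paper proves analogous facts, via the measures of Figure~\ref{fig:size-norms}, only for the sequent base $\lGR$). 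The paper's direct induction never needs a global simulation of reduction under substitution, which is precisely how it sidesteps this problem.
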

\begin{proof}
  The proof works by induction on the lexicographic product $"-{^{\lR}}>" `*
  \supseteq$ i.e., on the reduction by $\lR$ followed by the subterm relation.  More
  precisely given a $M$ assume $M\isub{N}{x}`:\SNR$ and assume the lemma holds when
  the term is a contracted of $M\isub{N}{x}`:\SNR$ or a subterm of $M$ and prove that
  $M`:\SNR$.
  \begin{itemize}
  \item If $M`= x$ or $M`=y$, the result is trivial.
  \item If $M`= `l y . P$, then $`l y. (P[N/x])`:\SNR$, therefore $P[N/x]`:\SNR$ and
    by induction $P`:\SNR$ and since abstraction creates no redex, $`l y.P`:\SNR$.
  \item If $M `= (M P)[N/x]$. Since $(MP)\isub{N}{x} = M\isub{N}{x}\,P\isub{N}{x}$,
    we get that $M`:\SNR$ and ${P`:\SNR}$ by induction.  If $M\not`=`l y.Q$,
    $M\,P`:\SNR$.  If $M`=`l y.Q$.  We know that \[(`l y.Q) P \isub{N}{x} = (`l
    y. Q\isub{N}{x})\,P\isub{N}{x}\] which reduces to
    $Q\isub{N}{x}\isub{P\isub{N}{x}}{y}$, which is equal to
    $Q\isub{P}{y}\isub{N}{x}$, by the substitution lemma and which in $\SNR$ by
    induction.
  \item If $M`= \weak{y}{P}$ and $y`:Fv(N)$, then $(\weak{y}{P})\isub{N}{x} =
    P\isub{N}{x}$ (see discussion in Example~\ref{ex:weak-sub}).  By induction
    $P`:\SNR$. Since no rule applies on the top of $\weak{y}{P}$, we conclude that
    $\weak{y}{P}`:\SNR$.
 \item If $M`= \weak{y}{P}$ and $y`;Fv(N)$, then $(\weak{y}{P})\isub{N}{x} =
    \weak{y}{(P\isub{N}{x})}$. 
    $\weak{y}{(P\isub{N}{x})}`:\SNR$ implies that $P\isub{N}{x}$, then by induction
    $P`:\SNR$ and since no rule applies on the top of $\weak{y}{P}$, we conclude that
    $\weak{y}{P}`:\SNR$.
  \item If $M`= \weak{x}{P}$, then $(\weak{x}{P})\isub{N}{x} = \weak{Fv(N) \setminus
      Fv(P)}{P}$.  If $\weak{Fv(N) \setminus Fv(P)}{P}`:\SNR$ then $P`:\SNR$, and
    like before, since no rule applies on the top of $\weak{x}{P}$, we conclude that
    ${\weak{x}{P}`:\SNR}$.
  \item If $M`=\cont{y}{y_1}{y_2}{P}$ with $x\neq y$, then
    $\cont{y}{y_1}{y_2}{(P\isub{N}{x})}`:\SNR$ and $P\isub{N}{x}`:\SNR$ as well.  By
    induction $P`:\SNR$.  To conclude that  $\cont{y}{y_1}{y_2}{P}`:\SNR$ we have to
    analyse all the cases where one of the rules $(`g)$ applies. Consider only two
    cases:
    \begin{itemize}
    \item $P`=R S$ and $x_1,x_2`;FV(S)$, then $(`g_2)$ applies.  Therefore
      $M\isub{N}{x}`=\cont{y}{y_1}{y_2}{(R\isub{N}{x} S\isub{N}{x})}`:\SNR$. On the
      top $\cont{y}{y_1}{y_2}{(R\isub{N}{x} S\isub{N}{x})}$ reduces to
      $(\cont{y}{y_1}{y_2}{R\isub{N}{x}})\, S\isub{N}{x} = (\cont{y}{y_1}{y_2}{R})
      \isub{N}{x}\,  S\isub{N}{x} = ((\cont{y}{y_1}{y_2}{R})
      S)\isub{N}{x}$.  By induction $(\cont{y}{y_1}{y_2}{R})
      S`:\SNR$. Hence $`:\SNR$ since all its reducts are in $\SNR$.
    \item $P`=\weak{y_1}{R}$ then $(`g`w_2)$ applies. Therefore
      $M\isub{N}{x}`=\cont{y}{y_1}{y_2}{\weak{y_1}{R}\isub{N}{x}}`:\SNR$.  Then
      ${R}\isub{N}{x}`:\SNR$ and by induction $R`:\SNR$  Let us prove that the head
      reduce of $M = \cont{y}{y_1}{y_2}{\weak{y_1}{R}}$ is in $\SNR$. By  $(`g`w_2)$
        we get $R\isub{y}{y_2}$ which is in $\SNR$ as just a renaming of $R$.
    \end{itemize}

  \end{itemize}
\end{proof}
}

\section{Simple types for resource control cube}
\label{sec:simple_types}

In this section we summarise the type assignment systems that
assign \emph{simple types} to all eight calculi of the resource
control cube in a uniform way. Simple types for $\lG$-calculus
were introduced by Esp\'{\i}rito Santo in~\cite{jesTLCA07}. As far
as resource control calculi are concerned, simple types were
introduced to $\llxr$-calculus by Kesner and Lengrand
in~\cite{kesnleng07} and to resource control lambda Gentzen
calculus $\llG$  in~\cite{ghilivetlesczuni11}.
The syntax of types is defined as follows:
$$
\textrm{Simple Types}  \qquad \tA, \tB \;\;  ::= \;\; p \mid \tA \to \tB
$$
\noindent where $p$ ranges over an enumerable set of type atoms
and $\tA \to \tB$ are arrow types.
We denote types \silv{by} $\tA,\tB,\tC...$.

\begin{defi}
\label{def:typass-basis}
\rule{0in}{0in}
\begin{itemize}
    \item[(i)]
    A basic type assignment is an expression of the form $x:\tA$, where
    $x$ is a term variable and $\tA$ is a type.
    \item[(ii)]
    A basis $\Gamma$ is a set $\{x_{1} : \tA_{1}, \ldots, x_{n} : \tA_{n}\}$
    of basic type assignments, where all term variables are different.
    $Dom(\Gamma)=\{x_{1}, \ldots, x_{n}\}$.
    \item[(iii)]
     A basis extension $\Gamma, x : \tA$ denotes the set $\Gamma \cup \{x:\tA\}$, where
    $x \not \in Dom(\Gamma).$ $\Gamma,\Delta$ represents the
    disjoint union of the two bases $\Gamma$ and $\Delta$.
    \item[(iv)]
    $\Gamma \cup_{\co} \Delta$ denotes \silv{the} standard union of the bases,
    if $\co \notin \mathcal{R}$, and \silv{the} disjoint union, if $\co \in
    \mathcal{R}$.
\end{itemize}
\end{defi}


The type assignment systems $\lR \! \! \! \to$ \silv{for the natural
  deduction base (ND-base) of the resource control cube} are given in
Figure~\ref{fig:simptyp-lR}.

\begin{figure}[hbtp]
\centerline{ \framebox{
$
\begin{array}{c}
\\
\infer[(Ax_{iw})]{\Gamma, x:\tA \vdashr x:\tA}{\w
\notin\mathcal{R}}\quad\quad \infer[(Ax_{ew})]{x:\tA \vdashr
x:\tA}{\w \in \mathcal{R}}\\ \\
\infer[(\to_R)]{\Gamma\vdashr \lambda x.M:\tA\to \tB}
                        {\Gamma, x:\tA \vdashr M:\tB} \quad\quad
\infer[(\to_{E})]{\Gamma \cup_{\co} \Delta \vdashr MN:\tB}
                    {\Gamma \vdashr M:\tA \to \tB & \Delta \vdashr N:\tB}\\ \\
\infer[(Cont)]{\Gamma, z:\tA \vdashr \cont{z}{x}{y}{M}:\tB}
                    {\Gamma, x:\tA,y:\tA \vdashr M:\tB & \co \in \mathcal{R}} \quad\quad
\infer[(Weak)]{\Gamma, x:\tA \vdashr \weak{x}{M}:\tB}
                    {\Gamma \vdashr M:\tB & \w \in \mathcal{R}}\\ \\
\end{array}
$ }} \caption{$\lR  \! \! \! \to$: Simply typed $\lR$-calculi} \label{fig:simptyp-lR}
\end{figure}



The type assignment systems $\lGR \! \! \! \to$ \silv{for the sequent
  base (LJ-base) of the resource control cube} are given in
Figure~\ref{fig:simptyp-lGR}.  In $\lGR$, we distinguish two sorts of
\jel{type assignments}:
\begin{itemize}
\item[-]
$\Gamma \vdash t:\tA$ for typing a term and 
\item[-] 
$\Gamma;\tB \vdash k:\tA$, 
a \jel{type assignment} 
for typing a context, with a \emph{stoup}. $`b$ is called the stoup.
\end{itemize} 
\pierre{The stoup
is a specific place for the last base assignment. It occurs after a
semi-colon. If we consider that the computation  is implemented by $(`b)$ and $(`s)$,
the formula in the stoup is where the
computation actually takes place.}

\begin{figure}[hbtp]
\centerline{ \framebox{
$
\begin{array}{c}
\\
\infer[(Ax_{iw})]{\Gamma, x:\tA \vdashr x:\tA}{\w
\notin\mathcal{R}}\quad\quad \infer[(Ax_{ew})]{x:\tA \vdashr
x:\tA}{\w \in\mathcal{R}}
\\ \\
\infer[(\to_R)]{\Gamma\vdashr \lambda x.t:\tA\to
\tB}
                        {\Gamma, x:\tA \vdashr t:\tB} \quad\quad
\infer[(\to_L)]{\Gamma \cup_{\co} \Delta; \tA \to \tB \vdashr
t::k:\tC}
                        {\Gamma \vdashr t:\tA & \Delta;\tB \vdashr k:\tC}\\ \\
\infer[(Sel)]{\Gamma;\tA \vdashr \bindx t:\tB}
                    {\Gamma,x:\tA\ \vdashr t:\tB} \quad\quad
\infer[(Cut)]{\Gamma \cup_{\co} \Delta \vdashr tk:\tB}
                    {\Gamma \vdashr t:\tA & \Delta; \tA \vdashr k:\tB}\\ \\
\infer[(Cont_t)]{\Gamma, z:\tA \vdashr \cont{z}{x}{y}{t}:\tB}
                    {\Gamma, x:\tA,y:\tA \vdashr t:\tB & \co \in \mathcal{R}} \quad\quad
\infer[(Weak_t)]{\Gamma, x:\tA \vdashr \weak{x}{t}:\tB}
                    {\Gamma \vdashr t:\tB & \w \in \mathcal{R}}\\ \\
\infer[(Cont_k)]{\Gamma, z:\tA; \tB \vdashr \cont{z}{x}{y}{k}:\tC}
                    {\Gamma, x:\tA, y:\tA; \tB \vdashr k:\tC & \co \in \mathcal{R}} \quad\quad
\infer[(Weak_k)]{\Gamma, x:\tA; \tB \vdashr \weak{x}{k}:\tC}
                    {\Gamma; \tC \vdashr k:\tC & \w \in \mathcal{R}}\\ \\
\end{array}
$ }} \caption{$\lGR  \! \! \! \to$: Simply typed $\lGR$-calculi} \label{fig:simptyp-lGR}
\end{figure}

\pierre{
Simply typed $\lGR$-calculi implements intuitionistic sequent calculus cut elimination with
implicit/explicit structural rules through the \pierre{ {Curry-Howard correspondance.}  In  particular, $\lGo  \! \! \!  \to$ and
$\lGcw  \! \! \!  \to$ calculi correspond to the intuitionistic implicative
fragments of Kleene's systems $\mathrm{G}3$ and $\mathrm{G}1$ from
\cite{klee52}, respectively, except that the exchange
rule is made implicit here.} \jel{The exchange} rule could be made
explicit by considering the bases as lists instead of sets. The
system $\lGc \! \! \! \to$ corresponds to the intuitionistic sequent
calculus with explicit contraction and implicit weakening, \silv{whereas} 
the system $\lGw  \! \! \!  \to$ corresponds to the intuitionistic sequent
calculus with explicit weakening and implicit contraction.}

Modifications of $\lR  \! \! \!  \to$ and $\lGR  \! \! \!  \to$ systems could provide 
computational interpretations of substructural logics, different
from the usual approach via linear logic. For instance, if one
combines $(Ax_{ew})$ axiom and the other rules in $\w \notin
\mathcal{R}$ modality, the resulting system would correspond to
the logic without weakening i.e. to the variant of relevance
logic. Similarly, if we use $\cup_{\co}$ as disjoint union
together with the $\co \notin \mathcal{R}$ modality of the rest of
the system, correspondence with the variant of the logic without
contraction i.e. affine logic is obtained. The properties of
these systems will not be investigated in this paper.

Although the systems $\lR  \! \! \!  \to$ and $\lGR  \! \! \!  \to$ enjoy subject
reduction and strong normalisation, they (as expected) do not
assign types to all strongly normalising expressions, like
$\lambda x.xx$ and $\lambda x. x::x(\hat{y}.y)$.
 This is the
motivation for introducing intersection types in the next section.

\section{Intersection types for resource control cube}
\label{sec:int_types}

In this section we introduce intersection type assignment
which assign \emph{strict types} to $\lR$-terms and $\lGR$-expressions.
Intersection types for the $\lG$-calculus were introduced in~\cite{espighilivet07}.
Strict types were proposed in~\cite{bake92} and already used
in~\cite{espiivetlika08} and~\cite{ghilivetlesclika11} for characterising of strong
normalisation in $\lG$-calculus and in $\rcl$ and
$\llG$-calculi, respectively.

The syntax of types is defined as follows:
$$
\begin{array}{lccl}
\textrm{Strict Types}  & \tS & ::= & p \mid \tA \to \tS\\
\textrm{Types}      & \tA & ::= & \jel{\cap^n_i \tS_i}
\end{array}
$$
\noindent where $p$ ranges over a denumerable set of type atoms
and \jel{$\cap^n_i \tS_i = \tS_1 \cap...\cap \tS_n,\; n \geq 1$}. We
denote types by $\tA,\tB,\tC...$, \jel{strict types by
$\tS,\tT,\tR,\tU...$ and the set of all types by $\mathsf{Types}$}. We assume that the intersection operator is
idempotent, commutative and associative, \jel{ and that it has
priority over the arrow operator. Hence, we will omit parenthesis in
expressions like $(\cap^n_i \tT_i) \to \tS$.}


The definition of a basic type assignment, a basis and a basis
extension is analogous to the one given in
Section~\ref{sec:simple_types}.
\jel{The type assignments are of the form 
$$x_1:\tA_1,...,x_n:\tA_n \vdash M:\tS$$
 so that only strict types are assigned to terms.}

\begin{defi}
\rule{0in}{0in}
\begin{itemize}
\item[(i)] A union of bases with intersection types is defined in
the standard way:
$$
\begin{array}{lcl}
\Gamma \sqcup \Delta & = &  \{x:\tA\;|\;x:\tA \in \Gamma\; \& \; x \notin Dom(\Delta)\}\\
                    & \cup & \{x:\tA\;|\;x:\tA \in \Delta\; \& \; x \notin Dom(\Gamma)\} \\
                    & \cup & \{x:\tA \cap \tB\;|\;x:\tA \in \Gamma \; \& \; x:\tB \in \Delta\}.
\end{array}
$$
\item[(ii)] $\Gamma \unc \Delta$ represents $\Gamma \sqcup
\Delta$, if $\co \notin \mathcal{R}$, and the disjoint union $\Gamma,
\Delta$ otherwise.
\end{itemize}
\end{defi}




\subsection{Intersection types for $\lR$}

The type assignment systems $\lR \cap$ \silv{for the natural deduction ND-base base of the resource control cube} are given in
Figure~\ref{fig:inttyp-lR}. \jel{The rules that correspond to each of the four
$\lR \cap$-systems are given in Figure~\ref{fig:lR-cap}. 

All systems are syntax-directed i.e.
the intersection operator is incorporated into already existing rules of the simply-typed systems.
Intersection elimination is managed by the axioms $(Ax_{iw})$, $(Ax_{ew})$ and the contraction rule $(Cont)$,
whereas the intersection introduction is performed by the arrow elimination rule $(\to_E)$. Notice that in the $(\to_E)$ rule,
$Dom(\Delta_1)= \ldots = Dom(\Delta_n)$. The explicit contraction is naturally connected to intersection, because
if some data is used twice, once as data of type $\alpha$ and once as data of type $\beta$,
that data should be of type $\alpha \cap \beta$.}

\begin{figure}[hbtp]
\centerline{ \framebox{ $
\begin{array}{c}
\\
\infer[(Ax_{iw})]{\Gamma, x:\cap^n_i \tS_i\vdashr x:\tS_i}
                        {\w \notin \mathcal{R}} \quad\quad
\infer[(Ax_{ew})]{x:\cap^n_i \tS_i \vdashr x:\tS_i}
                        {\w \in \mathcal{R}}\\ \\
\infer[(\to_I)]{\Gamma\vdashr \lambda x.M:\tA\to \tS}
                        {\Gamma, x:\tA \vdashr M:\tS}\\ \\
\infer[(\to_E)]{\Gamma \unc (\Delta_1 \sqcup ... \sqcup \Delta_n)  \vdashr MN:\tS}
                    {\Gamma \vdashr M:\cap^n_i \tT_i \to \tS & \Delta_1 \vdashr N:\tT_1 & ... & \Delta_n \vdashr N:\tT_n}\\ \\
\infer[(Cont)]{\Gamma, z:\tA \cap \tB \vdashr
\cont{z}{x}{y}{M}:\tS}
                    {\Gamma, x:\tA,y:\tB \vdashr M:\tS & \co \in \mathcal{R}} \quad\quad
\infer[(Weak)]{\Gamma, x:\tA \vdashr \weak{x}{M}:\tS}
                    {\Gamma \vdashr M:\tS & \w \in \mathcal{R}}\\ \\
\end{array}
$ }} \caption{$\lR \cap$: $\lR$-calculi with intersection types}
\label{fig:inttyp-lR}
\end{figure}

\begin{figure}[hbtp]
\begin{tabular}{c|c}
$\lR \cap$-systems & type assignment rules \\ \hline $\lo \cap$ & $(Ax_{iw})$, $(\to_I)$, $(\to_E)$ \\
\hline  $\lc \cap$ & $(Ax_{iw})$, $(\to_I)$, $(\to_E)$, $(Cont)$ \\
\hline  $\lw \cap$ & $(Ax_{ew})$, $(\to_I)$, $(\to_E)$, $(Weak)$ \\
\hline  $\lcw \cap$ & $(Ax_{ew})$, $(\to_I)$, $(\to_E)$, $(Cont)$, $(Weak)$
\end{tabular}
\caption{Four ND intersection type systems} \label{fig:lR-cap}
\end{figure}


The proposed systems satisfy the following \silv{properties}. 

\begin{prop}[Generation lemma for $\lR\cap$]
\label{prop:intGL} \rule{0in}{0in}
\begin{itemize}
\item[(i)] \sil{For $\w \notin \mathcal{R}$: $\;\Gamma \vdashr
x:\tT\;\;$ iff there exist $\tS_{i}, i = 1,\ldots, n$ such that $\;x:\tT \cap (\cap_{i}^{n}\tS_{i}) \in \Gamma.$ }
\item[(ii)] \sil{For $\w
\in \mathcal{R}$: $\;\Gamma \vdashr x:\tT\;\;$ iff there exist $\tS_{i}, i = 1,\ldots, n$ such that$\;x:\tT \cap
(\cap_{i}^{n}\tS_{i}) = \Gamma.$}
\item[(iii)] $\Gamma \vdashr \lambda x.M:\tT\;\;$
iff there exist $`a$ and $`s$ such that $\;\tT\equiv
\tA\rightarrow \tS\;\;$ and $\;\Gamma,x:\tA \vdashr M:\tS.$
\item[(iv)] \sil{$\Gamma \vdashr MN:\tS\;\;$ iff}
\jel{$\;\Gamma=\Gamma' \unc \Delta,\;\Delta=\Delta_{1} \sqcup
\ldots \sqcup \Delta_{n}$} \sil{and there exist $\tT_i,\;i  = 1,
\ldots, n$ such that $\Gamma' \vdashr M:\cap_{i}^{n}\tT_i\to \tS$
and for all $i \in \{1, \ldots, n\}$, $\;\Delta_{i} \vdashr
N:\tT_i.$}
\item[(v)] $\Gamma \vdashr \weak{x}{M}:\tS\;\;$ iff
there exist $`G', `b$ such that $\;\Gamma=\Gamma', x:\tB$ and
$\;\Gamma' \vdashr M:\tS.$
\item[(vi)] $\Gamma \vdashr
\cont{z}{x}{y}{M}:\tS\;\;$ iff there exist $`G', `a, `b$ such that
$\;\Gamma=\Gamma', z:`a\cap`b$ and \\
$\;\Gamma', x:\tA, y:\tB \vdashr M:\tS.$
\end{itemize}
\end{prop}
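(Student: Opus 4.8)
The plan is to prove all six equivalences by inverting the last typing rule, exploiting the syntax-directedness of $\lR\cap$ observed after Figure~\ref{fig:inttyp-lR}: each of the five term constructors ($x$, $\lambda x.M$, $MN$, $\weak{x}{M}$, $\cont{z}{x}{y}{M}$) is the conclusion of exactly one rule, so the outermost shape of the term uniquely determines which rule was applied last. I would handle each of the six cases by establishing the two implications separately.

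For the backward (``if'') direction I would simply construct the required derivation by applying the single relevant rule to the hypothesised premise(s). For the variable cases (i) and (ii), given $x:\tT\cap(\cap_i^n\tS_i)$ in (respectively, equal to) $\Gamma$, the axiom $(Ax_{iw})$ (respectively $(Ax_{ew})$) selects the strict conjunct $\tT$, using idempotence, commutativity and associativity of $\cap$ to arrange $\tT$ as a component of $x$'s type. For (iii), (v) and (vi) the rules $(\to_I)$, $(Weak)$ and $(Cont)$ apply directly to the stated premise; for (iv) I would instantiate $(\to_E)$ with the $n$ premises $\Delta_i\vdash N:\tT_i$ together with $\Gamma'\vdash M:\cap_i^n\tT_i\to\tS$, recombining the bases as $\Gamma'\unc(\Delta_1\sqcup\cdots\sqcup\Delta_n)$.

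For the forward (``only if'') direction I would invert: given a derivation of the left-hand judgment, syntax-directedness forces the last rule to be the unique one whose conclusion matches the head symbol of the term, and reading off its premise(s) yields precisely the stated decomposition. Since the type assigned to a term is always strict and each constructor triggers exactly one rule, no alternative derivation shape is possible, so the inversion is unambiguous and the structural rules $(Weak)$, $(Cont)$ do not interfere with the other cases.

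The main obstacle will be the application case (iv). There $(\to_E)$ carries $n$ separate premises for the argument $N$ (with strict types $\tT_1,\dots,\tT_n$) together with the base union $\Gamma'\unc(\Delta_1\sqcup\cdots\sqcup\Delta_n)$, and I must check in both directions that the splitting of $\Gamma$ into the functional part $\Gamma'$ and the argument part $\Delta=\Delta_1\sqcup\cdots\sqcup\Delta_n$ (with $Dom(\Delta_1)=\cdots=Dom(\Delta_n)$ as required by the rule) is correctly recoverable, paying attention to whether $\unc$ is the idempotent join $\sqcup$ (when $\co\notin\mathcal{R}$) or the disjoint union (when $\co\in\mathcal{R}$). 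The variable cases demand a smaller amount of intersection bookkeeping, reconciling the axiom's extraction of a single strict conjunct with the statement's $\tT\cap(\cap_i^n\tS_i)$ form via idempotence.
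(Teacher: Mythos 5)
Your proposal is correct and takes essentially the same route as the paper: the paper's own proof is just the one-line observation that all the rules of $\lR\cap$ are syntax-directed, so each case follows by inverting the unique rule whose conclusion matches the head constructor (your ``only if'' direction) and by applying that rule (your ``if'' direction). Your extra bookkeeping for the application case and the intersection conjuncts in the variable cases is exactly the detail the paper leaves implicit.
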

\begin{proof} The proof is straightforward since all the rules are
syntax-directed. 
\end{proof}

\jel{
\begin{prop}[Substitution lemma for $\lR\cap$]
\label{prop:sub-lemma-nd}
\rule{0in}{0in}
If $\;\Gamma, x:\jel{\cap_i^{n}}\tT_i \vdashr M:\tS\;$
and for all $i \sil{\in \{1, \ldots, n\}}$, $\;\Delta_i \vdashr N:\tT_i$, then $\;\Gamma \unc
\jel{(\Delta_1 \sqcup ... \sqcup \Delta_n)} \vdashr
M\isub{N}{x}:\tS.$
\end{prop}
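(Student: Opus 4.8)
The plan is to prove the substitution lemma by induction on the structure of the term $M$, following exactly the case analysis used in the definition of implicit substitution given in Figure~\ref{fig:sub-lR}. For each syntactic form of $M$, I would apply the Generation Lemma (Proposition~\ref{prop:intGL}) to invert the typing derivation of $\Gamma, x:\cap_i^n \tT_i \vdashr M:\tS$, then reassemble a derivation for $M\isub{N}{x}$ using the hypotheses $\Delta_i \vdashr N:\tT_i$. The base cases are the variables: if $M \equiv x$, then by Generation $\tS$ is one of the $\tT_i$, so $M\isub{N}{x} = N$ is typed directly by the corresponding $\Delta_i \vdashr N:\tT_i$, and one checks $\Gamma \unc (\Delta_1 \sqcup \dots \sqcup \Delta_n)$ agrees with $\Delta_i$ when $x$ is the only relevant variable; if $M \equiv y \neq x$, then $x$ does not really occur and $M\isub{N}{x} = y$ keeps its type, with the domain-bookkeeping handled by the side conditions.

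The inductive cases for abstraction $\lambda y.M_1$ and application $M_1 P$ are the standard $\lambda$-calculus arguments: invert with Generation parts (iii) and (iv), apply the induction hypothesis to the immediate subterm(s), and rebuild with $(\to_I)$ or $(\to_E)$. The genuinely new cases are the resource operators. For weakening $\weak{y}{M_1}$ with $y \neq x$, I would use Generation (v) to strip the weakening, apply induction, and reintroduce $(Weak)$, being careful that the outer weakened variable set is $\{y\}\setminus Fv(N)$ exactly as the substitution clause prescribes. For $\weak{x}{M_1}$, the definition replaces the erased $x$ by the fresh weakening $\weak{Fv(N)\setminus Fv(M_1)}{M_1}$; here $n=1$ (since $x$ occurs only through weakening) and one rebuilds a chain of $(Weak)$ applications introducing the free variables of $N$ into the basis, matching $\Gamma \unc \Delta_1$.

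The main obstacle, and the case demanding the most care, is the contraction clause $(\cont{x}{x_1}{x_2}{M_1})\isub{N}{x} \triangleq \cont{Fv[N]}{Fv[N_1]}{Fv[N_2]}{M_1[N_1/x_1,N_2/x_2]}$, which only arises when $\co \in \mathcal{R}$. Here the single substitution on $x:\alpha\cap\beta$ (via Generation (vi)) unfolds into a parallel double substitution of the two renamed copies $N_1, N_2$ of $N$, followed by reintroducing one contraction $\cont{z}{z_1}{z_2}$ for each free variable $z$ of $N$. Typing this requires showing that the type $\cap_i^n \tT_i$ assigned to $x$ splits coherently across $x_1$ and $x_2$, that $N_1$ and $N_2$ inherit the same types as $N$ under the renaming (so the $\Delta_i$ give fresh-variable-renamed copies $\Delta_i^1, \Delta_i^2$), and that the chain of $(Cont)$ rules correctly merges $\Delta_i^1$ and $\Delta_i^2$ into the bases $\Delta_i$ — precisely the point where explicit contraction mirrors the intersection operator, as remarked after Figure~\ref{fig:lR-cap}. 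I would handle this by strengthening the induction to a parallel-substitution statement, $\Gamma, x_1:\cap\tT_i, x_2:\cap\tU_j \vdashr M_1:\tS$ yielding a type for $M_1[N_1/x_1, N_2/x_2]$, matching the auxiliary statement already used for well-definedness in Proposition~1, so that the contraction case reduces to repeated application of the union-of-bases bookkeeping in $\unc$ and $\sqcup$.
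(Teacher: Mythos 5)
You should first know that the paper states this proposition \emph{without proof}, so there is no official argument to compare against; the closest model is the paper's proof of the inverse substitution lemma (Proposition~\ref{prop:inv-subst-lemma}), which proceeds by exactly the kind of structural induction you describe. Your overall scheme --- induction on $M$ following the clauses of Figure~\ref{fig:sub-lR}, inversion via the generation lemma, rebuilding with the typing rules, and a strengthened parallel-substitution induction hypothesis to handle the clause $(\cont{x}{x_1}{x_2}{M_1})\isub{N}{x} \triangleq \cont{Fv[N]}{Fv[N_1]}{Fv[N_2]}{M_1[N_1/x_1,N_2/x_2]}$ --- is the right one, and your treatment of the contraction case (splitting $\cap_i^{n}\tT_i$ over $x_1,x_2$, renamed copies of the $\Delta_i$ for $N_1,N_2$, then a chain of $(Cont)$ rules re-merging them) mirrors how the paper handles the same difficulty in the inverse direction.

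There is, however, a genuine gap in your base case, and it recurs elsewhere. For $M\equiv x$ you assert that $\Gamma \unc (\Delta_1 \sqcup \dots \sqcup \Delta_n)$ ``agrees with $\Delta_i$''; this need not hold as soon as $n\geq 2$ (or, when $\w\notin\mathcal{R}$, as soon as $\Gamma\neq\emptyset$). Generation gives $\tS=\tT_j$ for some $j$, hence $\Delta_j \vdashr N:\tT_j$, but the basis demanded by the statement assigns each $y\in Fv(N)$ the \emph{intersection} of its types across all the $\Delta_i$, which is strictly richer than its type in $\Delta_j$ alone. What is missing is an auxiliary enrichment lemma --- provable by a routine induction on derivations, but it must be identified and proved --- stating that derivability is stable under strengthening the basis: if $\Gamma', y:\tA \vdashr M:\tS$ then $\Gamma', y:\tA\cap\tB \vdashr M:\tS$ (and, when $\w\notin\mathcal{R}$, that disjoint fresh assumptions may be added). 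The same lemma is needed in your weakening case: when $y\in Fv(N)$ (possible only if $\co\notin\mathcal{R}$) the clause erases the weakening $\weak{y}{\cdot}$, and the type that $y$ carried in $\Gamma$ must then be absorbed into the type $y$ receives from the $\Delta_i$ --- again exactly basis enrichment, which your ``being careful about $\{y\}\setminus Fv(N)$'' remark does not cover. Finally, your claim that $n=1$ in the case $M\equiv\weak{x}{M_1}$ is unjustified, since the $(Weak)$ rule may place an arbitrary intersection on the erased variable; this one is harmless, because the weakening chain you describe works for any $n$ once the enrichment lemma is available.
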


\begin{prop}[Subject equivalence for $\lR\cap$] \label{prop:subequiv}
\rule{0in}{0in}
For every $\lR$-term $M$: if $\;\Gamma \vdashr
M:\tS\;$ and $M \equiv M'$, then $\;\Gamma \vdashr M':\tS.$
\end{prop}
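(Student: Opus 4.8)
The plan is to exploit that $\equiv$ is the least congruence on $\LR$ containing the four axioms $(`e_1)$--$(`e_4)$ of Figure~\ref{fig:equiv-lR}. I would therefore argue by induction on the derivation of $M \equiv M'$, reducing the claim first to the case of a single axiom fired at an arbitrary position, and then to the case of an axiom fired at the root. Reflexivity is immediate, transitivity is handled by composing two instances of the statement, and symmetry is free because each axiom is stated as an equation (hence reversible). The compatibility (congruence) cases are those where the equivalence occurs strictly inside $M$; there I would peel off the outermost constructor using the invertibility packaged in the Generation Lemma (Proposition~\ref{prop:intGL}), apply the induction hypothesis to the relevant immediate subterm, and rebuild the derivation with the same rule: clause (iii) for $\lambda$, clause (iv) for application, clause (v) for $\odot$, and clause (vi) for the contraction constructor. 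This leaves only the root cases.

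\emph{Base cases.} For each axiom I would decompose the derivation of $\Gamma \vdashr M:\tS$ by repeated appeals to Proposition~\ref{prop:intGL} and reassemble a derivation of the right-hand side. For $(`e_1)$, clause (v) used twice gives $\Gamma = \Gamma'', x:\tB, y:\tB'$ with $\Gamma'' \vdashr M:\tS$; reapplying $(Weak)$ in the opposite order yields the typing of $\weak{y}{\weak{x}{M}}$ on the very same basis, since a basis is a set. For $(`e_2)$, clause (vi) gives $\Gamma = \Gamma', z:\tA\cap\tB$ from $\Gamma', x:\tA, y:\tB \vdashr M:\tS$; swapping the two copies and reapplying $(Cont)$ produces the type $\tB\cap\tA$, equal to $\tA\cap\tB$ by commutativity of $\cap$, so the conclusion is unchanged.

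\emph{Rearrangement of nested contractions.} The substantial cases are $(`e_3)$ and $(`e_4)$, where clause (vi) is applied twice on each side. For $(`e_3)$ the three duplicated occurrences carry, say, strict types $\tA_1, \tA_2, \tD$, and the point is that these are distributed differently between the inner and outer contraction nodes on the two sides, yet the new variable introduced by the outer node receives on both sides the intersection $\tA_1 \cap \tA_2 \cap \tD$ up to associativity, commutativity and idempotence of $\cap$. It is exactly this ACI-character of intersection, together with the set structure of bases, that forces the two reconstructed judgements to coincide. For $(`e_4)$ the side conditions $x \neq y_1,y_2$ and $y \neq x_1,x_2$ guarantee that the two contractions act on disjoint sources, so their type information is independent and the two $(Cont)$ steps may simply be permuted.

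The main obstacle I anticipate is the bookkeeping in $(`e_3)$: the variable $y$ is simultaneously the new variable of the inner contraction and one of the two copies consumed by the outer one, so the two derivations do not decompose identically, and one must locate the precise place at which idempotence and associativity of $\cap$ are needed to force the premises to match. Once the two outer variables are seen to receive the same intersection type, the rest is a direct reassembly; note that the companion Substitution Lemma (Proposition~\ref{prop:sub-lemma-nd}) is not needed here, only the Generation Lemma and the algebraic laws of $\cap$.
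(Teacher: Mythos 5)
Your proof is correct, but there is no paper proof to measure it against: Proposition~\ref{prop:subequiv} is stated in the paper without any proof (as is its sequent analogue, Proposition~\ref{prop:subequiv-sequent}), so your argument fills a gap rather than duplicating one. What you do is the natural completion with the paper's own toolkit, and it mirrors the decompose-and-rebuild pattern the paper does spell out for subject reduction in the sequent base (Proposition~\ref{prop:sr-sequent}): induct over the congruence closure of the axioms of Figure~\ref{fig:equiv-lR}, invert the outermost constructor with the Generation Lemma (Proposition~\ref{prop:intGL}), and reassemble with the same syntax-directed rule. Your case analysis checks out. For $(`e_1)$, two uses of clause (v) and two of $(Weak)$ suffice because bases are sets; for $(`e_2)$, commutativity of $\cap$ closes the gap; for $(`e_3)$, the left-hand side decomposes to $\Gamma', z{:}\tB, u{:}\tC, v{:}\tD \vdashr M{:}\tS$ with $x{:}(\tC\cap\tD)\cap\tB$ in the conclusion, while rebuilding the right-hand side gives $x{:}(\tB\cap\tD)\cap\tC$, and these agree by associativity and commutativity alone --- the idempotence you invoke is never actually needed, since the three sources $u,v,z$ are distinct variables with independent types; for $(`e_4)$, the side conditions make the two $(Cont)$ steps commute. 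Two points deserve tightening. First, the types extracted by clause (vi) are arbitrary intersection types, not strict types as you say (harmless, since only the ACI laws are used). Second, ``symmetry is free'' needs one more sentence to be a proof step: in the induction, the symmetry rule requires the typing transfer in the direction opposite to the one the induction hypothesis provides, so you should either prove the axiom cases as equivalences (which your Generation-Lemma arguments in fact are, since each clause is an ``iff'' and the reassembly is reversible), or observe that each schema is its own inverse up to renaming of metavariables (for $(`e_3)$, swapping the roles of $u$ and $z$ turns the right-hand side into an instance of the left-hand pattern), so the symmetric closure of the axiom instances adds nothing new.
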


\begin{prop}[Subject reduction for $\lR\cap$]
\label{prop:sr}
\rule{0in}{0in}
For every $\lR$-term $M$: if $\;\Gamma \vdashr
M:\tS\;$ and $M \to M'$, then $\;\Gamma \vdashr M':\tS.$
\end{prop}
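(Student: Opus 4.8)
The plan is to reduce the statement to a finite case analysis over the reduction rules of Figure~\ref{fig:red-lR}, each applied at the root of the term, and then to lift this to reductions inside arbitrary contexts by a routine induction on the structure of $M$. The context-closure step is immediate because every rule of Figure~\ref{fig:inttyp-lR} is syntax-directed: if $M \to M'$ because a proper subterm $P$ of $M$ rewrites to $P'$, then by the Generation Lemma (Proposition~\ref{prop:intGL}) the derivation of $\Gamma \vdashr M:\tS$ contains a subderivation typing $P$, and replacing that subderivation by the one obtained from the inductive hypothesis on $P \to P'$ yields a derivation of $\Gamma \vdashr M':\tS$ with the same final basis and type. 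Thus the whole weight of the argument lies in the root reductions.

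For each root reduction $L \to R$ I would assume $\Gamma \vdashr L:\tS$, apply the appropriate clauses of the Generation Lemma to invert the derivation down to the typings of the immediate subterms of $L$, and then reassemble these into a derivation of $\Gamma \vdashr R:\tS$. The two cases that carry real content are $(\beta)$ and $(\gamma\omega_2)$, since both create an implicit substitution on the right-hand side. For $(\beta)$, Generation~(iv) splits $\Gamma = \Gamma' \unc (\Delta_1 \sqcup \cdots \sqcup \Delta_n)$ with $\Gamma' \vdashr \lambda x.M : \cap_i^n \tT_i \to \tS$ and $\Delta_i \vdashr N:\tT_i$; Generation~(iii) then gives $\Gamma', x:\cap_i^n \tT_i \vdashr M:\tS$, and the Substitution Lemma (Proposition~\ref{prop:sub-lemma-nd}) delivers exactly $\Gamma' \unc (\Delta_1 \sqcup \cdots \sqcup \Delta_n) \vdashr M\isub{N}{x}:\tS$, that is $\Gamma \vdashr M\isub{N}{x}:\tS$. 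The case $(\gamma\omega_2)$, whose right-hand side is $M\isub{x}{x_2}$, is handled in the same manner, using the substitution of a single variable for another.

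The remaining cases are purely structural rearrangements of the resource operators, and each amounts to checking that the two ways of nesting the rules $(Cont)$, $(Weak)$, $(\to_I)$ and $(\to_E)$ produce the same final basis and type. For the erasing rules $(\gamma_0)$, $(\gamma^{\;\;'}_0)$ I would use Generation~(i) and~(vi) together with the fact that $\w \notin \mathcal{R}$ makes weakening implicit, so that discarding the assumptions on $x_1, x_2$ (or renaming one of them to $x$) still leaves a valid instance of $(Ax_{iw})$. For the propagation rules $(\gamma_1)$--$(\gamma_3)$ and the extraction rules $(\omega_1)$--$(\omega_3)$ the commutations are direct; I would also invoke Subject Equivalence (Proposition~\ref{prop:subequiv}) wherever a rule is applied up to the equivalences of Figure~\ref{fig:equiv-lR}.

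The main obstacle I anticipate is the bookkeeping of basis-splitting in the application-related cases $(\gamma_2)$, $(\gamma_3)$, $(\omega_2)$, $(\omega_3)$. There one must use the side condition $x_1, x_2 \notin Fv(N)$ (respectively $\notin Fv(M)$), together with the fact that $\co \in \mathcal{R}$ forces $\unc$ to be a genuine disjoint union, to argue that the contracted variable $z$ (or the weakened variable) lives entirely in the basis of exactly one of the two subterms. Only after this localisation can the operator be pushed inside without disturbing the $\sqcup$-merge performed by $(\to_E)$. Making these domain conditions match up uniformly across the four values of $\mathcal{R}$ is the delicate part; everything else follows the inversion-then-reassembly pattern described above.
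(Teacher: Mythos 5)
Your proposal is correct and takes essentially the same route as the paper: the paper states Proposition~\ref{prop:sr} without a written proof, but its proof of the sequent analogue (Proposition~\ref{prop:sr-sequent}) uses exactly your strategy --- stability under contexts reduces everything to root reductions, which are then inverted with the Generation lemma and reassembled, with the Substitution lemma (Proposition~\ref{prop:sub-lemma-nd}) discharging the substitution-creating cases. Your handling of $(\beta)$ via Generation~(iii)/(iv) plus the Substitution lemma, of $(\gamma\omega_2)$ as a variable-for-variable substitution, and of the remaining rules as commutations of $(Cont)$/$(Weak)$ with the logical rules (with the basis-splitting bookkeeping you rightly flag as the delicate part) matches the paper's template.
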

}


\subsection{Intersection types for $\lGR$}

The type assignment systems $\lGR \cap$ \silv{for the sequent LJ-base base of the resource control cube}  are given in
Figure~\ref{fig:inttyp-lGR}. \jel{The rules that correspond to each of the four
$\lGR \cap$-systems are given in Figure~\ref{fig:lGR-cap}. 

As in $\lR\cap$, no new rules are added compared to $\lGR \!\!\!\to$ in order to manage intersection. In the style of sequent calculus, left intersection introduction is managed by the axioms $(Ax_{iw})$, $(Ax_{ew})$ and the contraction rules $(Cont_t)$ and $(Cont_k)$, whereas the right intersection introduction is performed by the cut rule $(Cut)$ and left arrow introduction rule $(\to_L)$. In these two rules $Dom(\Gamma_1)= \ldots = Dom(\Gamma_n)$.}

In order to explain the rule $(\to_L)$ in more details let us consider a simple case $n=2$ and $m=2$.
\[
\infer[(\to_L)]{(\Gamma_1 \sqcup  \Gamma_2) \unc \Delta;
 (\tS_1 \cap \tS_2 \to \tT_1)\cap (\tS_1 \cap \tS_2 \to \tT_2)  \vdashr
            t::k:\tR}
             {\Gamma_1 \vdashr t:\tS_1  & \Gamma_2 \vdashr t:\tS_2 & \Delta; \tT_1 \cap \tT_2 \vdashr k:\tR}
\]

\silv{Although one would expect in the stoup
$\tS_1 \cap \tS_2 \to \tT_1 \cap \tT_2$, this is not a type according to the definition of intersection types at the beginning of this section. Therefore the corresponding type in the stoup is $(\tS_1 \cap \tS_2 \to \tT_1)\cap (\tS_1 \cap \tS_2 \to \tT_2)$. This difficulty does not exist in natural deduction, because natural deduction is isomorphic to a fragment of $\lG$, where the selection rule $(Sel)$ is restricted to $\bindx x$. In that fragment the $(\to_L)$ rule would be simpler with strict types in the stoup. Hence, the presented  $(Sel)$ and  $(\to_L)$ rules keep the full power of the sequent calculus.}
\begin{figure}[hbtp]
\centerline{ \framebox{
$
\begin{array}{c}
\\
\infer[(Ax_{iw})]{\Gamma, \sil{x:\cap_{i}^{n}} \tS_i \vdashr x:\tS_i}{\w
\notin\mathcal{R}}\quad\quad
\infer[(Ax_{ew})]{\sil{x:\cap_{i}^{n}} \tS_i
\vdashr x:\tS_i}{\w \in\mathcal{R}}
\\ \\
\infer[(\to_R)]{\Gamma\vdashr \lambda x.t:\tA\to
\tS}
                        {\Gamma, x:\tA \vdashr t:\tS} \quad\quad
\infer[(Sel)]{\Gamma; \tA \vdashr \bindx t:\tS}
                    {\Gamma,x:\tA\ \vdashr t:\tS}\\ \\
\infer[(\to_L)]{(\Gamma_1 \sqcup ... \sqcup \Gamma_n) \unc \Delta;
\sil{\cap_j^{m}(\cap_i^{n}\tS_i} \to \tT_j) \vdashr
            t::k:\tR}
             {\Gamma_1 \vdashr t:\tS_1 & ... & \Gamma_n \vdashr t:\tS_n & \Delta;\sil{\cap_j^{m}}\tT_j \vdashr k:\tR}\\ \\
\infer[(Cut)]{(\Gamma_1 \sqcup ... \sqcup \Gamma_n) \unc
                \Delta \vdashr tk:\tT}
                {\Gamma_1 \vdashr t:\tS_1 & ... & \Gamma_n \vdashr t:\tS_n & \Delta; \sil{\cap_{i}^{n}} \tS_i \vdashr k:\tT}\\ \\
\infer[(Cont_t)]{\Gamma, z:\tA \cap \tB \vdashr
\cont{z}{x}{y}{t}:\tS}
                    {\Gamma, x:\tA,y:\tB \vdashr t:\tS & \co \in \mathcal{R}} \quad\quad
\infer[(Weak_t)]{\Gamma, x:\tA \vdashr \weak{x}{t}:\tS}
                    {\Gamma \vdashr t:\tS & \w \in \mathcal{R}}\\ \\
\infer[(Cont_k)]{\Gamma, z:\tA \cap \tB; \tC \vdashr
\cont{z}{x}{y}{k}:\tS}
                    {\Gamma, x:\tA, y:\tB; \tC \vdashr k:\tS & \co \in \mathcal{R}} \quad\quad
\infer[(Weak_k)]{\Gamma, x:\tA; \tC \vdashr \weak{x}{k}:\tS}
                    {\Gamma; \tC \vdashr k:\tS & \w \in \mathcal{R}}\\ \\
\end{array}
$ }} \caption{$\lGR \cap$: $\lGR$-calculi with intersection
types} \label{fig:inttyp-lGR}
\end{figure}

\begin{figure}[hbtp]
\begin{tabular}{c|c}
$\lR \cap$-systems & type assignment rules \\ \hline $\lo \cap$ & $(Ax_{iw})$, $(\to_R)$, $(\to_L)$, $(Sel)$, $(Cut)$ \\
\hline  $\lc \cap$ & $(Ax_{iw})$, $(\to_R)$, $(\to_L)$, $(Sel)$, $(Cut)$, $(Cont_t)$, $(Cont_k)$ \\
\hline  $\lw \cap$ & $(Ax_{ew})$, $(\to_R)$, $(\to_L)$, $(Sel)$, $(Cut)$, $(Weak_t)$, $(Weak_k)$ \\
\hline  $\lcw \cap$ & $(Ax_{ew})$, $(\to_R)$, $(\to_L)$, $(Sel)$, $(Cut)$, $(Cont_t)$, $(Cont_k)$, $(Weak_t)$, $(Weak_k)$
\end{tabular}
\caption{Four LJ intersection type systems} \label{fig:lGR-cap}
\end{figure}

The proposed systems satisfy the following properties.

\begin{prop}[Generation lemma for $\lGR\cap$]
\label{prop:intGL}
\rule{0in}{0in}
\begin{itemize}
\item[(i)] \sil{For $\w \notin \mathcal{R}$: $\;\Gamma \vdashr
x:\tT\;\;$  iff there exist $\tS_{i}, i = 1,\ldots, n$ such that $\;x:\tT \cap (\cap_{i}^{n}\tS_{i}) \in \Gamma.$ }
\item[(ii)] \sil{For $\w
\in \mathcal{R}$: $\;\Gamma \vdashr x:\tT\;\;$  iff there exist $\tS_{i}, i = 1,\ldots, n$ such that $\;x:\tT \cap (\cap_{i}^{n}\tS_{i}) = \Gamma.$}
\item[(iii)] $\Gamma \vdashr \lambda x.t:\tT\;\;$
iff there exist $`a$ and $`s$ such that $\;\tT\equiv
\tA\rightarrow \tS\;\;$ and $\;\Gamma,x:\tA \vdashr t:\tS.$
\item[(iv)] $\Gamma;\tA \vdashr \bindx t:\tS\;\;$ iff
$\;\Gamma,x:\tA \vdashr t:\tS.$
\item[(v)] \sil{$\Gamma \vdashr tk:\tS\;\;$ iff}\jel{
$\;\Gamma=\Gamma' \unc \Delta,\;\Gamma'=\Gamma'_1 \sqcup ...
\sqcup \Gamma'_n$} \sil{and there exist $\tT_i, i = 1, \ldots, n$
such that for all $i \in \{1, \ldots, n\},\;$} \silv{the following holds} \sil{ $\Gamma'_i \vdashr
t:\tT_i$, and $\;\Delta;\cap_{1}^{n} \tT_i \vdashr k:\tS.$}
\item[(vi)] \sil{$\Gamma;\tC \vdashr t::k:\tR\;\;$ iff}\jel{
$\;\Gamma=\Gamma' \unc \Delta,\;\Gamma'=\Gamma'_1 \sqcup ...
\sqcup \Gamma'_n$}, \sil{$\tC \equiv \cap_j^{m} (\cap_i^{n}\tS_i
\to \tT_j)\;$ and for all $i \in \{1, \ldots, n\},$} \silv{the following holds} \sil{$\;\Gamma'_i
\vdashr t:\tS_i$ and $\;\Delta;\cap_j^{m}\tT_j \vdash k:\tR\;$. }
\item[(vii)] $\Gamma
\vdashr \weak{x}{t}:\tS\;\;$ iff there exist $`G', `b$ such that
$\;\Gamma=\Gamma', x:\tB$ and $\;\Gamma' \vdashr t:\tS.$
\item[(viii)] $\Gamma; \tC \vdash \weak{x}{k}:\tS\;\;$ iff there
exist $`G,`b$ such that $\;\Gamma=\Gamma', x:\tB$ and $\;\Gamma;
\tC \vdashr k:\tS.$
\item[(ix)] $\Gamma \vdashr
\cont{z}{x}{y}{t}:\tS\;\;$ iff there exist $`G', `a, `b$ such that
$\;\Gamma=\Gamma', z:`a\cap`b$ and \\
$\;\Gamma', x:\tA, y:\tB \vdashr t:\tS.$
\item[(x)] $\Gamma;
\tE \vdashr \cont{z}{x}{y}{k}:\tS\;\;$
iff there exist $`G',`a, `b$ such that $\;\Gamma=\Gamma', z: \tA \cap \tB$ and\\
$\;\Gamma, x:\tA, y:\tB; \tE \vdashr k:\tS.$
\end{itemize}
\end{prop}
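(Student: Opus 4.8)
The plan is to exploit the fact that the type assignment system $\lGR\cap$ is \emph{syntax-directed}: for each expression constructor there is exactly one inference rule whose conclusion carries an expression with that outermost constructor. Consequently, in any derivation of $\Gamma \vdashr e : \tS$ or $\Gamma;\tA \vdashr k : \tS$, the last rule applied is uniquely determined by the shape of $e$ (resp.\ $k$). Each of the ten equivalences is then proved by a two-way argument: the ``only if'' direction is obtained by \emph{inverting} that unique last rule and reading off its premises, while the ``if'' direction is a single application of the same rule to the hypotheses. No genuine recursion is needed, so this is direct inspection rather than an induction.

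For the single-premise cases the argument is immediate. Cases (i) and (ii) come from the two axioms, split according to the modality $\w \notin \mathcal{R}$ versus $\w \in \mathcal{R}$, which fixes whether the context surrounding $x:\cap_i^n \tS_i$ may be non-empty (rule $(Ax_{iw})$) or must be exactly $x:\cap_i^n \tS_i$ (rule $(Ax_{ew})$). Case (iii) inverts $(\to_R)$, case (iv) inverts $(Sel)$, cases (vii) and (viii) invert $(Weak_t)$ and $(Weak_k)$, and cases (ix) and (x) invert $(Cont_t)$ and $(Cont_k)$; in each of these the premise is read off directly, and the side condition $\co \in \mathcal{R}$ or $\w \in \mathcal{R}$ records that the relevant operator is explicit.

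The only cases requiring care are the two multi-premise rules, namely (v) for the cut $tk$ (inverting $(Cut)$) and (vi) for the cons $t::k$ (inverting $(\to_L)$). Here inversion must recover the decomposition $\Gamma = \Gamma' \unc \Delta$ with $\Gamma' = \Gamma'_1 \sqcup \cdots \sqcup \Gamma'_n$, where the number $n$ of components is exactly the number of premises typing the head $t$; the condition $Dom(\Gamma_1) = \cdots = Dom(\Gamma_n)$ built into the rules guarantees that this $\sqcup$-union is well-formed and yields precisely the decomposition exhibited in the statement. For (vi) one additionally reads off the stoup type in its encoded form $\tC \equiv \cap_j^m(\cap_i^n \tS_i \to \tT_j)$, where $m$ is the number of premises typing the continuation $k$. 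I would stress, as already discussed after the rule $(\to_L)$, that this is the correct rendering of ``$\cap_i^n \tS_i \to \cap_j^m \tT_j$'' within the strict-type discipline, so no reassociation of the intersection across the arrow is performed.

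I expect the main obstacle to be purely bookkeeping: keeping the $\sqcup$- and $\unc$-unions of bases consistent between the two directions in cases (v) and (vi), and matching the indices $n$ (premises for $t$) and $m$ (premises for $k$) against the intersection structure of the stoup. There is no deeper difficulty, precisely because the use of strict types means there is no subsumption (pre-order) rule, so every derivation ends with the structurally forced rule and inversion loses no information.
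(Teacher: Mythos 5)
Your proposal is correct and takes essentially the same approach as the paper, whose entire proof is the one-line remark that the claim is straightforward because all the rules of $\lGR\cap$ are syntax-directed, i.e.\ exactly the inversion argument you develop. Your additional care with the base decompositions $\Gamma = (\Gamma'_1 \sqcup \cdots \sqcup \Gamma'_n) \unc \Delta$ and the stoup type in the $(Cut)$ and $(\to_L)$ cases just makes explicit the bookkeeping the paper leaves implicit.
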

\begin{proof} The proof is straightforward since all the rules are
syntax-directed. 
\end{proof}

\begin{prop}
\label{prop:pres-of-FV} If $e \to e'$, then $Fv(e) \supseteq Fv(e')$ and if $\w`:\mathsf{R}$, $Fv(e)=Fv(e')$.
\end{prop}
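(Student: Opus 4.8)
The plan is to prove the statement by induction on the derivation of $e \to e'$, treating the one-step rewrite rules of Figure~\ref{fig:red-lGR} as base cases and the closure of $\to$ under the term- and context-forming operators as the inductive step. Since $Fv(\cdot)$ is built only from union, set difference and the conditional clause for contraction, it is monotone in the free-variable sets of the immediate subexpressions: a short case analysis on the contraction clause shows that $Fv(E)\supseteq Fv(E')$ implies $Fv(\cont{x}{x_1}{x_2}{E})\supseteq Fv(\cont{x}{x_1}{x_2}{E'})$, and the other constructors are obviously monotone. Hence in every congruence case the induction hypothesis lifts directly to the surrounding expression, and when $\w\in\mathcal{R}$ the hypothesised equality propagates verbatim. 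The whole argument therefore reduces to checking the base cases.

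Before the base cases I would establish two auxiliary lemmas. The first is an \emph{append} lemma, $Fv(\app{k}{k'}) = Fv(k)\cup Fv(k')$, proved by a routine induction on $k$ following the defining clauses of $@$; it settles $(\pi)$ at once, since then $Fv((tk)k')=Fv(t)\cup Fv(k)\cup Fv(k')=Fv(t(\app{k}{k'}))$. The second, and central, auxiliary result is a \emph{substitution} lemma for free variables:
\[
x\in Fv(e)\ \Longrightarrow\ Fv(e\isub{t}{x})=(Fv(e)\setminus\{x\})\cup Fv(t),
\qquad
x\notin Fv(e)\ \Longrightarrow\ e\isub{t}{x}=e .
\]
This is proved by induction on $e$ following Figure~\ref{fig:sub-lGR}, and the delicate clauses are exactly the weakening ones. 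The clause $(\weak{x}{e})\isub{t}{x}\triangleq\weak{Fv(t)\setminus Fv(e)}{e}$ reintroduces precisely those variables of $t$ not already occurring in $e$, so the free variables of the result are $Fv(t)\cup Fv(e)$, matching the right-hand side of the lemma; likewise $(\weak{y}{e})\isub{t}{x}\triangleq\weak{\{y\}\setminus Fv(t)}{e\isub{t}{x}}$ is engineered so that $y$ is absorbed into $Fv(t)$ whenever $y\in Fv(t)$. The conditional definition of $Fv(\cdot)$ on contractions is handled by distinguishing whether $\{x_1,x_2\}$ meets the relevant free-variable set, using Barendregt's convention to keep $x$, $x_1$, $x_2$ distinct from the surrounding binders.

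With these lemmas the base cases are a finite computation. The rule $(\sigma)$, $t(\bindx v)\to v\isub{t}{x}$, is the only source of a genuine decrease: when $x\in Fv(v)$ the substitution lemma gives $Fv(v\isub{t}{x})=(Fv(v)\setminus\{x\})\cup Fv(t)=Fv(t(\bindx v))$, an equality; when $x\notin Fv(v)$ — which can occur only if $\w\notin\mathcal{R}$, since well-formedness of the selection $\bindx v$ (Figure~\ref{fig:wf}) forces $x\in Fv(v)$ whenever $\w\in\mathcal{R}$ — the lemma gives $v\isub{t}{x}=v$, whence $Fv(t(\bindx v))=Fv(t)\cup Fv(v)\supseteq Fv(v)$, a possibly strict inclusion. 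All remaining rules preserve $Fv(\cdot)$ \emph{exactly}, verified by expanding the defining clauses on both sides: for $(\mu)$, $\bindx xk\to k$, Barendregt's convention gives $x\notin Fv(k)$, so both sides have free variables $Fv(k)$; for the $(\omega)$-rules the set $\{x\}\setminus Fv(\cdot)$ keeps $x$ iff it is not already supplied by the other component; for $(\gamma_0)$–$(\gamma_6)$ and $(\gamma\omega_1)$ the two branches of the contraction clause match on both sides; and for $(\gamma\omega_2)$, $\cont{x}{x_1}{x_2}{(\weak{x_1}{e})}\to e\isub{x}{x_2}$, one invokes the substitution lemma together with the well-formedness facts $x_1\notin Fv(e)$ and $x_2\in Fv(e)$ forced by the redex.

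Finally, if reduction is taken modulo the structural equivalences of Figure~\ref{fig:equiv-lGR}, each equivalence preserves $Fv(\cdot)$ exactly, since it only permutes weakenings and contractions without changing which variables occur. The main obstacle throughout is the substitution lemma, and within it the two weakening clauses of Figure~\ref{fig:sub-lGR}, whose very purpose is to keep the free-variable set invariant; once that invariant is in place, the equality half of the statement for $\w\in\mathcal{R}$ follows because $(\sigma)$ is then applied only when the substituted variable is genuinely free, removing the single position where the inclusion could be strict.
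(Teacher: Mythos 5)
The paper states Proposition~\ref{prop:pres-of-FV} with no proof at all, so there is no official argument to compare against; judged on its own, your architecture is the right one: induction on $e\to e'$, congruence cases discharged by monotonicity of $Fv(\cdot)$ in the free variables of subexpressions, an append lemma for $(\pi)$, a substitution lemma for $(\sigma)$ and $(\gamma\omega_2)$, and the correct identification of $(\sigma)$ with $x\notin Fv(v)$ as the unique source of strict inclusion, excluded when $\w\in\mathcal{R}$ by the well-formedness of $\bindx v$ in Figure~\ref{fig:wf}.

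There is, however, a genuine gap in your central auxiliary lemma: its second clause, $x\notin Fv(e)\Rightarrow e\isub{t}{x}=e$, is false in $\lGw$ (explicit weakening, implicit contraction). In that calculus the well-formedness of a cut does \emph{not} force $Fv(t)\cap Fv(e)=\emptyset$ (disjointness is required only when $\co\in\mathcal{R}$), and the clause $(\weak{y}{e'})\isub{t}{x}\triangleq\weak{\{y\}\setminus Fv(t)}{e'\isub{t}{x}}$ of Figure~\ref{fig:sub-lGR} deletes the weakening whenever $y\in Fv(t)$, even though $x$ occurs nowhere in $e$: for instance $(\weak{y}{z})\isub{y}{x}=z$, which is not $\weak{y}{z}$ and does not even have the same free variables. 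Such sub-calls do arise from genuine reductions, e.g.\ $y\,(\bindx x(\weak{y}{z}::\bindu u))\to_{\sigma} y\,(z::\bindu u)$, where the lost $y$ is restored only by the copy of $t$ substituted at the head. You cannot sidestep this: your inductive proof of the first clause (the one the base cases actually use) must recurse into subexpressions in which $x$ does not occur --- e.g.\ $(vk)\isub{t}{x}=v\isub{t}{x}\,k\isub{t}{x}$ with $x\in Fv(v)$, $x\notin Fv(k)$ --- and at exactly that point it appeals to the false identity. (The analogous identity the paper states for $\lR$, Lemma~\ref{lem:xnotfree}, is shielded by the side condition $[x\in Fv(M)]_{\w}$ imposed on $\lR$-substitution; no such condition is stated for $\lGR$, and the recursive calls would violate it anyway.) The repair is to weaken your second clause to a statement about free variables only, namely $Fv(e)\setminus Fv(t)\subseteq Fv(e\isub{t}{x})\subseteq Fv(e)$, equivalently $Fv(e\isub{t}{x})\cup Fv(t)=Fv(e)\cup Fv(t)$: this is provable by the same induction, and it suffices at every point where you invoke the clause, because there $Fv(t)$ is unioned in anyway, either by the ambient application/cons or by the redex itself. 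With that correction (keeping your appeal to Barendregt's convention, which also excludes the other failure of the syntactic identity, a vacuous contraction $\cont{x}{x_1}{x_2}{\cdot}$ whose introduced name coincides with the substituted variable), your proof goes through for all four calculi.
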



\pierre{\begin{prop}
\label{prop:bases-weak}
\rule{0in}{0in}
\begin{itemize}
\item[(i)]  If $\;\Gamma \vdashr t:\tS\;$, then
$\;Dom(\Gamma)\supseteq Fv(t)$ and if $\w \in \mathcal{R}$, $\;Dom(\Gamma)= Fv(t).$ 
\item[(ii)] If $\;\Gamma; \tA \vdashr k:\tS\;$, then
$\;Dom(\Gamma)\supseteq Fv(k)$ and  if $\w \in \mathcal{R}$ $\;Dom(\Gamma)= Fv(k)$.
\end{itemize}
\end{prop}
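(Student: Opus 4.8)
The plan is to prove both statements (i) and (ii) simultaneously by structural induction on the typing derivation. This is legitimate because every rule of $\lGR \cap$ in Figure~\ref{fig:inttyp-lGR} is syntax-directed, so the last rule applied is determined by the shape of $t$ or $k$, and the two statements are genuinely intertwined (e.g.\ the premise of $(Sel)$ and of $(\to_L)$ types a term, while the rule types a context). For each rule I would first establish the inclusion $Dom(\Gamma) \supseteq Fv(\cdot)$ unconditionally, and then the stronger equality $Dom(\Gamma) = Fv(\cdot)$ under the extra hypothesis $\w \in \mathcal{R}$, reading off the definition of $Fv$ for $\lGR$-expressions given just before Figure~\ref{fig:wf}.

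The recurring bookkeeping fact I would record first is that $Dom(\Gamma \sqcup \Delta) = Dom(\Gamma) \cup Dom(\Delta)$ and, since $\unc$ is either $\sqcup$ or a disjoint union, also $Dom(\Gamma \unc \Delta) = Dom(\Gamma) \cup Dom(\Delta)$; moreover in the multi-premise rules $(\to_L)$ and $(Cut)$ the stated side condition $Dom(\Gamma_1) = \cdots = Dom(\Gamma_n)$ gives $Dom(\Gamma_1 \sqcup \cdots \sqcup \Gamma_n) = Dom(\Gamma_1)$. With this, the $(Cut)$ and $(\to_L)$ cases reduce to combining the induction hypotheses on the two components by union, matching $Fv(tk) = Fv(t) \cup Fv(k)$ and $Fv(t::k) = Fv(t) \cup Fv(k)$: the inclusion is immediate, and when $\w \in \mathcal{R}$ the two component equalities yield the union equality.

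For the binding cases the domain of the premise basis shrinks by exactly the bound variable. In $(\to_R)$ and $(Sel)$ the premise is $\Gamma, x:\tA \vdashr t:\tS$, so the induction hypothesis gives $Dom(\Gamma) \cup \{x\} \supseteq Fv(t)$, whence $Dom(\Gamma) \supseteq Fv(t) \setminus \{x\} = Fv(\lambda x.t)$ (resp.\ $Fv(\bindx t)$). For the equality direction I would use that when $\w \in \mathcal{R}$ the induction hypothesis forces $x \in Dom(\Gamma, x:\tA) = Fv(t)$, which both validates the well-formedness side condition $[x \in Fv(t)]_{\w}$ and makes $Dom(\Gamma) = Fv(t) \setminus \{x\}$ coincide with the free-variable set of the binder. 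The axioms are the base cases: $(Ax_{iw})$ permits spurious declarations in $\Gamma$ and hence gives only the inclusion $\{x\} \subseteq Dom(\Gamma)$, whereas $(Ax_{ew})$ has basis exactly $x:\cap_i^n \tS_i$, giving $Dom(\Gamma) = \{x\} = Fv(x)$ as required in the $\w \in \mathcal{R}$ modality.

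The remaining cases are the explicit resource rules, which occur only in the relevant modality. In $(Weak_t)$/$(Weak_k)$ a fresh $x$ is added to both the basis and the term, so $Dom(\Gamma, x:\tA) = Dom(\Gamma) \cup \{x\}$ matches $Fv(\weak{x}{t}) = \{x\} \cup Fv(t)$ under the induction hypothesis. In $(Cont_t)$/$(Cont_k)$ two premise variables $x,y$ are merged into $z$; invoking the well-formedness condition $[x,y \in Fv(t)]_{\w}$ (available since $\co$ and $\w$ are both present here) one computes $Fv(\cont{z}{x}{y}{t}) = \{z\} \cup (Fv(t) \setminus \{x,y\})$ and matches it against $Dom(\Gamma, z:\tA\cap\tB) = Dom(\Gamma) \cup \{z\}$ using the induction hypothesis $Fv(t) = Dom(\Gamma) \cup \{x,y\}$. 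The only genuine subtlety, and the step I expect to require the most care, is precisely this interplay with the well-formedness side conditions in the binder and contraction cases: the equality half of the statement does not follow from the typing rules in isolation but relies on the fact that a typed expression is well-formed. I would therefore make explicit at the outset that all expressions under consideration satisfy the formation rules of Figure~\ref{fig:wf}, so that the conditions $[x \in Fv(t)]_{\w}$ and $[x_1,x_2 \in Fv(t)]_{\w}$ may be freely used throughout the induction.
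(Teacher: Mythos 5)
Your proof is correct. There is nothing in the paper to compare it against: Proposition~\ref{prop:bases-weak} is stated without any proof, so your simultaneous induction on typing derivations for (i) and (ii) — with the bookkeeping facts $Dom(\Gamma \sqcup \Delta) = Dom(\Gamma)\cup Dom(\Delta)$ and $Dom(\Gamma_1)=\cdots=Dom(\Gamma_n)$ in the multi-premise rules — supplies exactly the routine argument the authors evidently considered too standard to write down.

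One refinement: the "genuine subtlety" you flag at the end is not actually needed. In every case where the equality claim is at stake, the $\w$-part of the induction hypothesis is itself an equality, and it alone forces the side conditions you appeal to: in $(\to_R)$ and $(Sel)$ the hypothesis $Dom(\Gamma)\cup\{x\} = Fv(t)$ already yields $x \in Fv(t)$, and in $(Cont_t)$/$(Cont_k)$ the hypothesis $Dom(\Gamma)\cup\{x,y\} = Fv(t)$ already yields $x,y \in Fv(t)$, which is precisely what selects the clause $Fv(\cont{z}{x}{y}{t}) = \{z\} \cup (Fv(t)\setminus\{x,y\})$ in the definition of $Fv$. So the equality half does follow from the typing rules in isolation, and the blanket appeal to the formation rules of Figure~\ref{fig:wf} is harmless (typing judgements range over $\lGR$-expressions anyway) but superfluous. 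Conversely, the one place you gloss over is the inclusion half of the contraction case when $\mathcal{R}=\{\co\}$: there the condition $x,y \in Fv(t)$ is not available, so one must split on whether $\{x,y\}\cap Fv(t)=\emptyset$; both branches follow immediately from $Fv(t) \subseteq Dom(\Gamma)\cup\{x,y\}$, so this costs only a line.
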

}
\begin{prop}[Substitution lemma \jel{for $\lGR\cap$}]
\label{prop:sub-lemma}
\rule{0in}{0in}
\begin{itemize}
\item[(i)] If $\;\Gamma, x:\jel{\cap_i^{n}}\tT_i \vdashr t:\tS\;$
and for all $i$, $\;\Delta_i \vdashr u:\tT_i$, then $\;\Gamma \unc
\jel{(\Delta_1 \sqcup ... \sqcup \Delta_n)} \vdashr
t\isub{u}{x}:\tS.$
\item[(ii)] If $\;\Gamma, x:\jel{\cap_i^{n}} \tT_i;\tA \vdashr
k:\tS\;$ and for all $i$, $\;\Delta_i \vdashr u:\tT_i$, then
$\;\Gamma \unc \jel{(\Delta_1 \sqcup ... \sqcup \Delta_n)};\tA
\vdashr k\isub{u}{x}:\tS.$
\end{itemize}
\end{prop}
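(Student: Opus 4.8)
The plan is to prove both statements (i) and (ii) simultaneously by induction on the structure of the expression into which we substitute, i.e.\ on the term $t$ for part~(i) and on the context $k$ for part~(ii); equivalently one may induct on the height of the typing derivation. In each case the first move is to invert the last typing rule by means of the Generation Lemma (Proposition~\ref{prop:intGL}), read off the typings of the immediate sub-expressions together with the way the intersection type $\cap_i^n\tT_i$ assigned to $x$ is distributed among the premises, apply the induction hypothesis to those sub-expressions, and finally reassemble the derivation of $t\isub{u}{x}$ (resp.\ $k\isub{u}{x}$) with the very same rule, using that substitution commutes with the syntactic constructors exactly as prescribed in Figure~\ref{fig:sub-lGR}. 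The structural cases---abstraction, cut $tk$, cons $t::k'$, selection $\bindy v$, and the resource operators carrying a variable $y\neq x$---are routine: the intersection $\cap_i^n\tT_i$ is split along the $\sqcup$/$\unc$ decomposition produced by the Generation Lemma, the matching sub-families of the $\Delta_i$ are fed into the induction hypothesis, and the premise is rebuilt. Note that throughout part~(ii) the stoup is untouched, since substitution never reaches a type.

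Two auxiliary facts are needed before the interesting cases. First, a basis-monotonicity property: if $\Gamma\vdashr u:\tS$ then $u$ remains typeable with type $\tS$ after enlarging the declared types of variables by intersection (and, when $\w\notin\mathcal{R}$, after adding fresh declarations). This is what makes the variable case $t=x$ go through: here the Generation Lemma gives $\tS=\tT_j$ for some $j$ and we already have $\Delta_j\vdashr u:\tT_j$, so we must promote this to $\Gamma\unc(\Delta_1\sqcup\cdots\sqcup\Delta_n)\vdashr u:\tS$. Second, a renaming lemma: since $t_1,t_2$ are obtained from $u$ by renaming its free variables with fresh ones, typeability and types are preserved, the basis being renamed accordingly; this is needed to type the copies produced by substitution through a contraction. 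Both facts follow from the syntax-directedness of the rules and from Proposition~\ref{prop:bases-weak}, which pins down the domain of a basis.

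The two genuinely delicate cases are the explicit resource operators bound to the substituted variable $x$. For $t=\weak{x}{t'}$, inversion forces $\cap_i^n\tT_i=\tB$ and $\Gamma\vdashr t':\tS$, while the definition gives $t\isub{u}{x}=\weak{Fv(u)\setminus Fv(t')}{t'}$; one must reintroduce exactly the weakenings over the free variables of $u$ not already present in $t'$, reading their types off the combined basis $\Delta_1\sqcup\cdots\sqcup\Delta_n$ (whose domain is $Fv(u)$ when $\w\in\mathcal{R}$) and applying $(Weak_t)$ repeatedly. For $t=\cont{x}{x_1}{x_2}{t'}$ the Generation Lemma yields $\cap_i^n\tT_i=\tA\cap\tB$ together with $\Gamma,x_1:\tA,x_2:\tB\vdashr t':\tS$; one partitions the family $\{\Delta_i\}$ into the sub-families witnessing the components of $\tA$ and of $\tB$, applies the induction hypothesis once for $x_1$ against the renamed copy $t_1$ and once for $x_2$ against $t_2$, and then rebuilds the contraction $\cont{Fv(u)}{Fv(t_1)}{Fv(t_2)}{\,\cdot\,}$ over the shared free variables of the two copies by iterated use of $(Cont_t)$, matching $(\cont{x}{x_1}{x_2}{t'})\isub{u}{x}=\cont{Fv(u)}{Fv(t_1)}{Fv(t_2)}{t'\isub{t_1}{x_1}\isub{t_2}{x_2}}$.

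I expect this contraction case to be the main obstacle: it is the only one where a single substitution unfolds into two (on the fresh copies $x_1,x_2$), where the intersection assigned to $x$ must be split \emph{and} recombined, and where the bookkeeping of renamed free variables and of the freshly created contractions over $Fv(u)$ must be reconciled with the $\sqcup$/$\unc$ operations on bases. The remaining care is purely notational: respecting the side conditions of $(Cont_t)$, $(Cont_k)$, $(Weak_t)$, $(Weak_k)$ about $\co,\w\in\mathcal{R}$ so that the single proof instantiates correctly to each of the four calculi of the LJ-base.
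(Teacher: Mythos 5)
The first thing to note is that the paper states Proposition~\ref{prop:sub-lemma} without any proof at all (just as it does for its ND-base analogue, Proposition~\ref{prop:sub-lemma-nd}), so there is no official argument to compare you against. Your sketch is the expected one: simultaneous structural induction on terms and contexts, inversion of the last rule via the Generation Lemma, and reassembly by the same syntax-directed rule, with the substitution clauses of Figure~\ref{fig:sub-lGR} dictating the shape of each case. Your identification of the two auxiliary facts is also accurate: the variable case genuinely needs monotonicity of typing under intersection-enlargement of the basis (and, for $\w \notin \mathcal{R}$, under addition of fresh declarations), and the contraction case genuinely needs invariance of typing under renaming of free variables by fresh ones.

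There is, however, one step that fails as literally written. In the case $t = \cont{x}{x_1}{x_2}{t'}$ you ``apply the induction hypothesis once for $x_1$ against the renamed copy and once for $x_2$''. The first application is legitimate, since $t'$ is a proper subterm; but the second is applied to $t'\isub{u_1}{x_1}$, which is not a subterm of $t$ (nor of anything in sight), so structural induction does not license it -- and induction on the height of the typing derivation fares no better, because the derivation produced by the first application of the induction hypothesis has uncontrolled height. The standard repair, which the paper itself gestures at for the analogous well-definedness statement of $\lR$-substitution (``Proposition is proved together with auxiliary statement'' concerning $M[N_1/x_1,N_2/x_2]$), is to strengthen the lemma to a simultaneous substitution of two variables $x_1, x_2$ with $x_2 \notin Fv(u_1)$ and $x_1 \notin Fv(u_2)$; under these disjointness conditions $t'\isub{u_1}{x_1}\isub{u_2}{x_2}$ coincides with the parallel substitution $t'[u_1/x_1, u_2/x_2]$, and the strengthened statement does go through by structural induction, the one-variable lemma being the instance with one trivial component. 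With that adjustment the rest of your argument -- splitting $\cap_i^{n}\tT_i$ as $\tA \cap \tB$, partitioning the family $\{\Delta_i\}$ accordingly, and recontracting over $Fv(u)$ by iterated $(Cont_t)$/$(Cont_k)$ -- is correct.
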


\begin{prop}[Append lemma]
\label{prop:app-lemma} \rule{0in}{0in} If $\;\Gamma;\tA \vdashr
k:\tT_i\;$ for all $i$, and $\;\Delta;\jel{\cap_i^{n}} \tT_i
\vdashr k':\tS$, then $\;\Gamma \unc \Delta;\tA \vdashr
\app{k}{k'}:\tS.$
\end{prop}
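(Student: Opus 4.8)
The plan is to prove the statement by structural induction on the first context $k$, exploiting the fact that in the hypothesis $\Gamma;\tA \vdashr k:\tT_i$ the basis $\Gamma$ and the stoup $\tA$ are the \emph{same} for every $i$, and only the strict result type $\tT_i$ varies. In each step I would use the Generation Lemma for $\lGR\cap$ to strip off the outermost constructor of $k$, apply the induction hypothesis to the immediate subcontext, and then re-apply the matching typing rule. For the base case $k=\bindx t$, the append definition gives $\app{k}{k'}=\bindx(tk')$. Generation~(iv) turns each $\Gamma;\tA \vdashr \bindx t:\tT_i$ into $\Gamma,x:\tA \vdashr t:\tT_i$; since the bases typing $t$ all coincide, their union is $\Gamma,x:\tA$ by idempotency of $\cap$, so the $(Cut)$ rule applied to these $n$ typings of $t$ together with the hypothesis $\Delta;\cap_i^n\tT_i \vdashr k':\tS$ yields $(\Gamma,x:\tA)\unc\Delta \vdashr tk':\tS$. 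As $x$ is fresh for $\Delta$ by the variable convention, this is $(\Gamma\unc\Delta),x:\tA \vdashr tk':\tS$, and $(Sel)$ delivers $\Gamma\unc\Delta;\tA \vdashr \bindx(tk'):\tS$.

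The cases $k=\weak{x}{k_1}$ and $k=\cont{x}{y}{z}{k_1}$ are routine precisely because $\Gamma$ is fixed across the $\tT_i$. For weakening, generation~(viii) gives $\Gamma=\Gamma',x:\tB$ with $\Gamma';\tA \vdashr k_1:\tT_i$ for all $i$; the induction hypothesis on $k_1$ yields $\Gamma'\unc\Delta;\tA \vdashr \app{k_1}{k'}:\tS$, and $(Weak_k)$ re-introduces $x:\tB$, matching $\app{(\weak{x}{k_1})}{k'}=\weak{x}{\app{k_1}{k'}}$. The contraction case is entirely analogous, using generation~(x) and $(Cont_k)$, and the fact that append commutes the resource operator past $@$.

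The cons case $k=u::k_1$ is the crux. By the $(\to_L)$ rule the stoup of a cons is an intersection of arrows sharing one common antecedent, so $\tA\equiv\cap_j^m(\cap_l^p\tS_l\to\tR_j)$, where both the antecedent $\cap_l^p\tS_l$ and the family $\{\tR_j\}_j$ are determined by $\tA$ itself and hence do not depend on $i$. Generation~(vi) then gives, for each $i$, a splitting $\Gamma=\Gamma'_{(i)}\unc\Theta_{(i)}$ in which $u$ receives the types $\tS_l$ from $\Gamma'_{(i)}$ and $\Theta_{(i)};\cap_j^m\tR_j \vdashr k_1:\tT_i$. The main obstacle is exactly that the split of the fixed basis $\Gamma$ between the head $u$ and the tail $k_1$ could a priori vary with $i$, whereas the induction hypothesis on $k_1$ demands a single basis valid for all $i$ simultaneously. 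I would settle this by showing the splits can be chosen uniformly: when $\co\in\mathcal{R}$ the cons-formation condition $Fv(u)\cap Fv(k_1)=\emptyset$ forces the split to follow the free-variable partition and hence be independent of $i$; when $\co\notin\mathcal{R}$ both $\unc$ and $\sqcup$ reduce to the idempotent union $\sqcup$, so the contributions of $u$ and $k_1$ may be shared and recombined freely, with Proposition~\ref{prop:bases-weak} keeping domains aligned with free variables. Fixing such a uniform $\Theta$ with $\Theta;\cap_j^m\tR_j \vdashr k_1:\tT_i$ for all $i$, the induction hypothesis applied to $k_1$ and to the hypothesis $\Delta;\cap_i^n\tT_i \vdashr k':\tS$ gives $\Theta\unc\Delta;\cap_j^m\tR_j \vdashr \app{k_1}{k'}:\tS$. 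Re-applying $(\to_L)$ with the retained typings $u:\tS_l$ from $\Gamma'$ then yields $\Gamma'\unc(\Theta\unc\Delta);\tA \vdashr u::\app{k_1}{k'}:\tS$, which by associativity of $\unc$ and $\Gamma'\unc\Theta=\Gamma$ is exactly $\Gamma\unc\Delta;\tA \vdashr \app{k}{k'}:\tS$, as required.
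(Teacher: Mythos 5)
Your overall strategy---structural induction on $k$, using the Generation Lemma to strip the outermost constructor and then re-applying $(Cut)$/$(Sel)$, $(\to_L)$, $(Weak_k)$, $(Cont_k)$---is the natural one: the paper in fact states this proposition \emph{without} proof, and proves the companion Inverse Append Lemma by exactly this induction, so your skeleton is surely the intended one. Your base case and your weakening case are correct (for weakening the decomposition $\Gamma=\Gamma',x{:}\tB$ really is unique, since $\Gamma$ and $x$ are fixed). But there are two genuine gaps. First, the contraction case is \emph{not} ``entirely analogous'' to weakening: generation item~(x) yields, for each $i$, a decomposition $\Gamma=\Gamma',x{:}\tA_i\cap\tB_i$ with $\Gamma',y{:}\tA_i,z{:}\tB_i;\tA\vdashr k_1:\tT_i$, and while $\Gamma'$ and the type of $x$ are fixed, the way that type is split between $y$ and $z$ may genuinely depend on $i$ (e.g.\ $y{:}\tU_1,z{:}\tU_2$ in the derivation of $\tT_1$ but $y{:}\tU_2,z{:}\tU_1$ in that of $\tT_2$; both contract to the same $x{:}\tU_1\cap\tU_2$). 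So the induction hypothesis cannot be applied to $k_1$ there any more than in the cons case; the uniformity problem you call ``the crux'' is not confined to cons, and your stated reason (``$\Gamma$ is fixed across the $\tT_i$'') does not cover it.

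Second, your uniformization argument for cons does not go through in all four calculi. When $\co\in\mathcal{R}$ but $\w\notin\mathcal{R}$ (the calculus $\lGc$), Proposition~\ref{prop:bases-weak} gives only $Dom(\Gamma)\supseteq Fv$, so assumptions on variables free in neither $u$ nor $k_1$ may be placed on either side of the disjoint split, and differently for different $i$: the free-variable partition does not determine the split. When $\co\notin\mathcal{R}$, the cons has no disjointness condition, so $u$ and $k_1$ may share free variables, and the intersection type of a shared variable can be divided between the two sides differently for each $i$; saying the contributions ``may be shared and recombined freely'' is exactly the point that needs proof. What is missing in both places (and what would also repair the contraction case) is an explicit basis-expansion lemma, proved by induction on derivations: enlarging the intersection type of an assumption, and adding (or, when $\w\notin\mathcal{R}$, removing) assumptions on variables not free in the expression, preserves derivability. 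With such a lemma you can replace all the $i$-dependent bases by canonical ones read off from $\Gamma$ itself---e.g.\ give both $y$ and $z$ the full type $\tA\cap\tB$ and use $(\tA\cap\tB)\cap(\tA\cap\tB)=\tA\cap\tB$ after re-applying $(Cont_k)$---and only then invoke the induction hypothesis. As written, the induction does not close.
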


\jel{\begin{prop}[Subject equivalence \jel{for $\lGR\cap$}] \label{prop:subequiv-sequent}
\rule{0in}{0in}
\begin{itemize}
\item[(i)] For every $\lGR$-term $t$: if $\;\Gamma \vdashr
t:\tS\;$ and $t \equiv t'$, then $\;\Gamma \vdashr t':\tS.$
\item[(ii)] For every $\lGR$-context $k$: if $\;\Gamma; \tA
\vdashr k:\tS\;$ and $k \equiv k'$, then $\;\Gamma; \tA \vdashr
k':\tS.$
\end{itemize}
\end{prop}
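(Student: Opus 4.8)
The plan is to prove parts (i) and (ii) simultaneously. Recall that $\equiv$ is the least congruence on $\lGR$-expressions generated by the four rules of Figure~\ref{fig:equiv-lGR}, so I would argue by induction on the derivation of $e \equiv e'$, treating terms and contexts together because the two syntactic categories are defined by mutual recursion and the equivalence rules are stated uniformly for an arbitrary expression $e$. The reflexivity case is immediate, transitivity follows by composing the two typing statements supplied by the induction hypothesis, and symmetry is handled by verifying each base rule in a reversible, ``iff'' fashion. The engine of the whole argument is the Generation Lemma for $\lGR\cap$ (Proposition~\ref{prop:intGL}), whose clauses (vii)--(x) let me strip and reattach the outer weakening and contraction operators while keeping exact control of the bases and of the types placed in the stoup.

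For the base cases I would apply the Generation Lemma to both sides and check that the residual judgement on $e$ and the reconstructed basis coincide. Rule $(`e_1)$ only reorders two weakenings, so clauses (vii)/(viii) peel off $x:\tB$ and $y:\tC$ on either side, and the two bases agree because a basis is a \emph{set}. Rule $(`e_2)$ replaces $\cont{x}{x_1}{x_2}{e}$ by $\cont{x}{x_2}{x_1}{e}$: clauses (ix)/(x) give $x:\tA\cap\tB$ on the left and $x:\tB\cap\tA$ on the right, which are equal by commutativity of $\cap$, the premise $\Gamma,x_1:\tA,x_2:\tB \vdashr e:\tS$ being shared. Rule $(`e_4)$ swaps two contractions acting on the \emph{disjoint} variable sets $\{x_1,x_2\}$ and $\{y_1,y_2\}$ (guaranteed by its side conditions $x\neq y_1,y_2$ and $y\neq x_1,x_2$); stripping the two contractions in either order via (ix)/(x) yields the same basis $\Gamma_0,x_1:\tA,x_2:\tB,y_1:\tC,y_2:\tD$ for $e$ and the same assignments $x:\tA\cap\tB$, $y:\tC\cap\tD$ in $\Gamma$.

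The delicate base case is $(`e_3)$, where the two contractions \emph{share} the variable $y$. Applying clauses (ix)/(x) twice to $\cont{x}{y}{z}{(\cont{y}{u}{v}{e})}$ forces the type of the bound $y$ to split as $\tC\cap\tD$ with $u:\tC$, $v:\tD$, so that $e$ is typed in $\Gamma_0,z:\tB,u:\tC,v:\tD$ while $x$ receives $(\tC\cap\tD)\cap\tB$ in $\Gamma$. Doing the same to $\cont{x}{y}{u}{(\cont{y}{z}{v}{e})}$ types $e$ in the \emph{identical} basis $\Gamma_0,u:\tC,z:\tB,v:\tD$ and gives $x$ the type $(\tB\cap\tD)\cap\tC$; since $(\tC\cap\tD)\cap\tB$ and $(\tB\cap\tD)\cap\tC$ denote the same type by associativity and commutativity of $\cap$, the two derivations carry the same judgement. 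This bookkeeping of how the intersection decorating the shared contraction variable reassociates is the main obstacle of the proof. Once it is settled, the congruence cases are routine: for each constructor---$\lambda x.\,$, $fc$, $f::c$, $\bindx f$, $\weak{x}{\cdot}$ and $\cont{x}{x_1}{x_2}{\cdot}$---I would use the corresponding Generation clause to expose the immediate subexpression in which the equivalence takes place, invoke the induction hypothesis on it, and reapply the same typing rule to rebuild the derivation, the two binary constructors $fc$ and $f::c$ being handled by noting that the equivalence affects exactly one component while the other is retyped unchanged.
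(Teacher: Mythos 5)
Your proposal cannot be compared against the paper's argument for the simple reason that there is none: Proposition~\ref{prop:subequiv-sequent} is stated without proof (as is its ND-side twin, Proposition~\ref{prop:subequiv}), the authors evidently regarding it as routine given the syntax-directedness of $\lGR\cap$. Your argument supplies exactly the missing content, and it is correct. The structure --- simultaneous induction on the derivation of $e \equiv e'$ for terms and contexts, generation clauses (vii)--(x) to strip the outer $\weak{x}{\cdot}$ and $\cont{x}{x_1}{x_2}{\cdot}$, then the rules $(Weak_t)$, $(Weak_k)$, $(Cont_t)$, $(Cont_k)$ to rebuild them in the opposite order --- is the natural one, and the case analysis is sound: bases-as-sets disposes of $(`e_1)$; commutativity of $\cap$ disposes of $(`e_2)$; the side conditions $x \neq y_1,y_2$, $y \neq x_1,x_2$ of $(`e_4)$ keep the two contractions independent so the reconstruction is order-insensitive; and you correctly isolate $(`e_3)$ as the one real case, where the shared variable's type reassociates from $(\tC\cap\tD)\cap\tB$ to $(\tB\cap\tD)\cap\tC$ and the paper's standing assumption that $\cap$ is associative and commutative is what closes the gap. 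Establishing each base case as an equivalence of typability conditions makes symmetry immediate, and since $(Weak_k)$ and $(Cont_k)$ leave the stoup untouched, part (ii) goes through in step with part (i). What your write-up adds over the paper is precisely the explicit record of which ambient conventions (idempotent, commutative, associative intersection; bases as sets; the fact that the equivalences of Figure~\ref{fig:LJ-base} are only present when the matching $\co \in \mathcal{R}$ or $\w \in \mathcal{R}$ side conditions of the typing rules hold) are what make subject equivalence true at all --- a point worth having on paper, since $(`e_3)$ would fail without associativity-commutativity of $\cap$.
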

}
\begin{prop}[Subject reduction \jel{for $\lGR\cap$}]
\label{prop:sr-sequent}
\rule{0in}{0in}
\begin{itemize}
\item[(i)] For every $\lGR$-term $t$: if $\;\Gamma \vdashr
t:\tS\;$ and $t \to t'$, then $\;\Gamma \vdashr t':\tS.$
\item[(ii)] For every $\lGR$-context $k$: if $\;\Gamma; \tA
\vdashr k:\tS\;$ and $k \to k'$, then $\;\Gamma; \tA \vdashr
k':\tS.$
\end{itemize}
\end{prop}
\begin{proof} \pierre{The property is stable by context. Therefore we can assume that the
  reduction takes place at the outermost position of the term $t$ (resp. of the context $k$).}  Here we just
show several cases.
\sil{We will use GL as an abbreviation for Generation lemma for $\lGR\cap$ (Lemma~\ref{prop:intGL}).}\\
Case $(\beta)$: Let $\Gamma
\vdashr (\lambda x.t)(u::k):\tS$. We are showing that
  $\Gamma' \vdashr u (\bindx tk):\tS$.\\
 From $\Gamma \vdashr (\lambda x.t)(u::k):\tS\;$ and from GL(v) it follows that
 $\Gamma = \Gamma'' \unc \Delta$, \jel{$\Gamma''=\Gamma''_1 \sqcup ... \sqcup \Gamma''_m$} and that there is a type $\sil{\cap_j^{m}} \tT_j$ such that
 \sil{for all $j=1, \ldots, m$,} $\Gamma''_j \vdashr \lambda x.t:\tT_j,\;$ and
 $\Delta;\sil{\cap_j^{m}}\tT_j \vdashr u::k:\tS$.
From GL(iii) we have that \sil{for all $j=1, \ldots, m$,} $\tT_j
\equiv \tA_j \to \tR_j$ and $\Gamma''_j, x:\tA_j \vdashr t:\tR_j$.
From GL(vi) it follows that
 $\Delta = \Delta' \unc \Delta''$, \jel{$\Delta'=\Delta'_1 \sqcup ...
 \sqcup \Delta'_n$},
$\jel{\Delta'_i} \vdashr u:\tS_i\;$ and $\Delta'';
\sil{\cap_j^{m}}\tR_j
 \vdashr k:\tS$, for $\sil{\cap_j^{m}} \tT_j \equiv \sil{\cap_j^{m}(\cap_i^{n}}\tS_i \to \tR_j)$. Also, we conclude that $\tA_j \equiv \sil{\cap_i^{n}}\tS_i$. Now,
 \[
 \prooftree
      \Delta'_1 \vdashr u:\tS_1\;...\;\Delta'_n \vdashr u:\tS_n
      \prooftree
          \prooftree
              \Gamma''_1, x:\sil{\cap_i^{n}}\tS_i \vdashr t:\tR_1\;...\;\Gamma''_m, x:\sil{\cap_i^{n}}\tS_i \vdashr t:\tR_m\;\;\;\;\Delta''; \sil{\cap_j^{m}}\tR_j
 \vdash k:\tS
              \justifies
              \Gamma'' \unc \Delta'',x:\sil{\cap_i^{n}}\tS_i \vdashr tk:\tS
              \using{(Cut)}
          \endprooftree
          \justifies
          \Gamma''\unc \Delta'';\sil{\cap_i^{n}}\tS_i \vdashr \bindx tk:\tS
          \using{(Sel)}
      \endprooftree
      \justifies
      \jel{\Gamma''\unc \Delta'' \unc (\Delta'_1 \sqcup ... \sqcup \Delta'_n)} \vdashr u (\bindx tk):\tS.
      \using{(Cut)}
 \endprooftree
 \]
\jel{which is exactly what we wanted, since $\Gamma = \Gamma''\unc \Delta'' \unc (\Delta'_1 \sqcup ... \sqcup \Delta'_n)$.}\\
Case $(\gamma_6)$: Let $\Gamma,x:\tA;\tB \vdashr
\cont{x}{x_1}{x_2}{(t::k)}:\tS$.
We are showing that $\Gamma,x:\tA;\tB \vdash t::\cont{x}{x_1}{x_2}{k}:\tS$.\\
From $\Gamma,x:\tA;\tB \vdashr \cont{x}{x_1}{x_2}{(t::k)}:\tS$ by
GL(x) we have that $\tA \equiv \tC \cap \tD$ and
$\Gamma,x_1:\tC,x_2:\tD;\tB \vdashr (t::k):\tS$. Next, GL(vi)
and the reduction side-condition $x_1,x_2 \notin Fv(t)$ imply that
$\Gamma = \jel{\Gamma'_1 \sqcup ... \sqcup \Gamma'_n}, \Gamma''$, $\tB \equiv \sil{\cap_j^{m}(\cap_i^{n}}\tT_i \to
\tR_j)$, $\jel{\Gamma'_i}\vdashr t:\tT_i$ \jel{for $i=1...n$} and
$\Gamma'',x_1:\tC,x_2:\tD;\sil{\cap_j^{m}}\tR_j \vdashr k:\tS$. Now,
\[
 \prooftree
      \jel{\Gamma'_1\vdashr t:\tT_1\;...\;\Gamma'_n\vdashr t:\tT_n}
      \prooftree
          \Gamma'',x_1:\tC,x_2:\tD;\sil{\cap_j^{m}}\tR_j \vdashr k:\tS
          \justifies
          \Gamma'',x:\tC \cap \tD; \sil{\cap_j^{m}}\tR_j \vdashr
          \cont{x}{x_1}{x_2}{k}:\tS
          \using{(Cont_k)}
      \endprooftree
      \justifies
      \jel{\Gamma'_1 \sqcup ... \sqcup \Gamma'_n},\Gamma'',x:\tC \cap \tD;\sil{\cap_j^{m}(\cap_i^{n}}\tT_i \to
\tR_j)  \vdashr t::\cont{x}{x_1}{x_2}{k}:\tS
      \using{(\to_L)}
 \endprooftree
 \]
which is exactly what we needed.\\
Case $(\omega_4)$: Let $\Gamma,y:\tA;\tB \vdashr \bindx
\weak{y}{t}:\tS$.
We are showing that $\Gamma,y:\tA;\tB \vdashr \weak{y}{\bindx t}:\tS$.\\
From $\Gamma,y:\tA;\tB \vdashr \bindx \weak{y}{t}:\tS$ by
GL(iv) we have that $\Gamma,y:\tA,x:\tB \vdashr
\weak{y}{t}:\tS$ and afterwards by GL(viii) that $\Gamma,x:\tB
\vdashr t:\tS$. Now,
\[
 \prooftree
      \prooftree
          \Gamma,x:\tB \vdashr t:\tS
          \justifies
          \Gamma;\tB \vdashr \bindx t:\tS
          \using{(Sel)}
      \endprooftree
      \justifies
      \Gamma,y:\tA;\tB \vdashr \weak{y}{\bindx t}:\tS.
      \using{(Weak_k)}
 \endprooftree
 \]
 
\end{proof}


\section{Typeability $\Rightarrow$ SN in all systems of the resource control cube}
\label{sec:typeSN}


\subsection{Typeability $\Rightarrow$ SN in $\lR \cap$}
\label{sec:reducibility}

The \emph{reducibility method} is a well known approach for proving reduction properties
of $\lambda$-terms typeable in different type assignment systems.
It was introduced by Tait~\cite{tait67} for proving the strong normalisation property
of simply typed $\lambda$-calculus.
It was developed further to prove \emph{strong normalisation property} of various calculi
in~\cite{tait75,gira71,kriv90,ghil96}, \emph{confluence} 
of $\beta \eta$-reduction in~\cite{kole85,statI85} 
and to characterise certain classes of $\lambda$-terms
such as strongly normalising, normalising, head normalising,
and weak head normalising terms by their typeability in various intersection type systems
in~\cite{gall98,dezahonsmoto00,dezaghil02,dezaghillika04}.

The principal idea of the reducibility method is to connect the
terms typeable in a certain type assignment system with the terms satisfying certain reduction properties
(e.g., strong normalisation, confluence).
To this aim, types are interpreted by suitable sets of lambda terms which satisfy certain
realizability properties.
Then the soundness of type assignment with respect to these interpretations is obtained.
As a consequence of soundness, every typeable term belongs to the interpretation of its type, and as such satisfies a desired reduction property.

In the remainder of the paper we consider $\LR$ to be the applicative structures whose domains are $\lR$-terms
and where the application is just the application of $\lR$-terms. In addition, $\LRc$ is the set of \emph{closed} $\lR$-terms. We first recall some notions from~\cite{bare92}.

\begin{defi}
For $\vM, \vN \subseteq \LRc$, we define $\vM \fsto \vN \subseteq \LRc$ as
$$\vM \fsto \vN =  \{M  \in \LRc \mid \forall N \in \vM \quad MN \in \vN \}.$$
\end{defi}

First of all, we introduce the following notion of {\em type interpretation\/}.

\begin{defi}[Type interpretation]
\label{def:typeInt}
The type interpretation  $\ti{-} : \tlam \to 2^{\LRc}$ is defined by:
  \begin{itemize}
    \item[($I 1$)] $\ti{p} = \SNc$, where $p$ is a type atom;
    \item[($I 2$)] $\ti{\cap_{i}^{n} \tS_{i}}  = \cap_{i}^{n} \ti{\tS_{i}}$;
    \item[($I 3$)] $\ti{\tA \to \tS} = \ti{\tA} \fsto \ti{\tS}$.
\end{itemize}
\end{defi}

Next, we introduce the notion of \emph{saturation property}, obtained by modifying the saturation property given in~\cite{bare92}.


\begin{defi}\label{def:var+sat}
\rule{0in}{0in}
A set $X \subseteq \SNc$ 
\silv{is called \emph{$\mathcal{R}$-saturated}
if it satisfies the following two properties:}
    \begin{itemize}
    \item $\INH(\vX)$:
         $(\exists n \geq 0) \; (\forall p \geq n) \;
                        \lambda x_1 \ldots \lambda x_p.\lambda y.y \in \vX.$
    \item $\SAT_{\beta}(\vX)$:
         $(\forall n \geq 0) \, (\forall M_1, \ldots, M_n \in \SNc)\,
    (\forall N \in \SNc)$ \\
         $ \hspace*{10mm}  M\isub{N}{x} M_1 \ldots M_n \in \vX \;  \Rightarrow \;  (\lambda x.M)N M_1 \ldots M_n \in \vX. $
    \end{itemize}
\end{defi}

%
%

\begin{prop} \label{prop:saturSets}
Let $\vM, \vN \subseteq \LRc$.
\begin{itemize}
    \item[(i)] $\SNc$ is $\mathcal{R}$-saturated.
    \item[(ii)] If $\vM$ and $\vN$ are $\mathcal{R}$-saturated, then $\vM \fsto \vN$ is $\mathcal{R}$-saturated.
    \item[(iii)] If $\vM$ and $\vN$ are $\mathcal{R}$-saturated, then $\vM \cap \vN$ is $\mathcal{R}$-saturated.
    \item[(iv)] For all types $\varphi \in \mathsf{Types}$, $\ti{\varphi}$ is $\mathcal{R}$-saturated.
\end{itemize}
\end{prop}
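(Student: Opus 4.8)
The plan is to establish (i)--(iii) directly from the definition of $\mathcal{R}$-saturation and then obtain (iv) by a structural induction on types that appeals to (i)--(iii). The two saturation conditions $\INH$ and $\SAT_{\beta}$ are essentially orthogonal, so for each item I would verify them separately; the only genuinely delicate point is $\SAT_{\beta}(\SNc)$ in part (i), which is where the explicit resource operators make the argument depart from the classical reducibility proof.

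For (i), $\INH(\SNc)$ is immediate with $n=0$, since each $\lambda x_1 \ldots \lambda x_p.\lambda y.y$ is a closed normal form, hence in $\SNc$ (in the weakening calculi the unused abstracted variables carry the matching weakenings, but the term remains a closed normal form). The substance is $\SAT_{\beta}(\SNc)$: given $N, M_1, \ldots, M_n \in \SNc$ with $M\isub{N}{x} M_1 \ldots M_n \in \SNc$, I must show $(\lambda x.M)N M_1 \ldots M_n \in \SNc$. Since $M\isub{N}{x}$ is a subterm of a strongly normalising term it lies in $\SNR$, so by the earlier proposition stating that $M\isub{N}{x}\in\SNR$ implies $M\in\SNR$, we obtain $M \in \SNR$. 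I would then show that every one-step reduct of $(\lambda x.M)N M_1 \ldots M_n$ is strongly normalising, by induction on the measure $\snorm{M} + \snorm{N} + \sum_i \snorm{M_i}$, which is now well-defined because all components are in $\SNR$. The head $\beta$-contraction yields $M\isub{N}{x} M_1 \ldots M_n$, which is $\SNc$ by hypothesis; a reduction inside one component sends the term to one whose head reduct $\to^{*}$-reaches the correspondingly reduced $M\isub{N}{x} M_1 \ldots M_n$ (using that reduction is preserved by substitution and closed under application contexts), so that reduct is $\SNc$ while the measure strictly decreases and the induction hypothesis applies.

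The main obstacle is precisely the remaining reducts in (i): beyond head-$\beta$ and reductions inside the components, the resource rules $(\omega)$, $(\gamma)$ and $(\gamma\omega)$ can fire at the junction between the outer $\lambda x$, the argument $N$, and the spine $M_1 \ldots M_n$ --- for instance $\lambda x.\weak{y}{M'} \to \weak{y}{(\lambda x.M')}$ followed by $(\omega_2)$ pulling the weakening past $N$. Each such reduct must be checked strongly normalising; this is routine but tedious, and is much simplified by the fact that all terms in sight are closed, so weakenings over closed subterms vanish and contractions capture no free variables. This case analysis, organised by the same measure, is the heart of the argument.

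Parts (ii)--(iv) are then easy. For (ii), $\vM \fsto \vN \subseteq \SNc$ holds because $\vM$ is inhabited by $\INH$: choosing any (closed) $P \in \vM$, for $L \in \vM \fsto \vN$ we have $LP \in \vN \subseteq \SNc$, whence its subterm $L \in \SNc$. Both conditions for $\vM \fsto \vN$ reduce to those of $\vN$: to check $\SAT_{\beta}$ one tests membership against an arbitrary $P \in \vM$, appending $P$ to the spine $M_1, \ldots, M_n$ and invoking $\SAT_{\beta}(\vN)$; for $\INH$ one shows $(\lambda x_1 \ldots \lambda x_p.\lambda y.y)P \in \vN$ by one $\beta$-step to an $\INH$-witness of $\vN$ together with $\SAT_{\beta}(\vN)$, so the threshold for $\vM \fsto \vN$ is one more than that of $\vN$. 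For (iii), $\INH(\vM \cap \vN)$ follows by taking the maximum of the two thresholds, and $\SAT_{\beta}(\vM \cap \vN)$ follows by applying $\SAT_{\beta}$ to $\vM$ and to $\vN$ separately. Finally (iv) is a structural induction on the mutually defined grammar of strict types and types: the atomic case gives $\ti{p} = \SNc$, saturated by (i); the arrow case $\ti{\tA \to \tS} = \ti{\tA} \fsto \ti{\tS}$ is saturated by (ii) and the induction hypothesis; and the intersection case $\ti{\cap_i^n \tS_i} = \cap_i^n \ti{\tS_i}$ is saturated by iterating (iii).
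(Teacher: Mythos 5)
Your proposal is correct and takes essentially the same route as the paper: for (i)--(iii) both check $\INH$ and $\SAT_{\beta}$ directly (inhabitation witnesses $\lambda x_1\ldots\lambda x_p.\lambda y.y$, the inclusion $\vM \fsto \vN \subseteq \SNc$ via an inhabitant of $\vM$, the threshold shifted by one for $\INH(\vM \fsto \vN)$, extension of the spine by $P \in \vM$ for $\SAT_{\beta}$, maxima of thresholds for intersections), and (iv) is the same structural induction on types. If anything, your treatment of (i) is more explicit than the paper's, which argues informally that the reductions inside $M, N, M_1, \ldots, M_n$ terminate before the head contraction, leaving implicit both the well-founded measure and the fact that no $(\gamma)$, $(\omega)$ or $(\gamma\omega)$ rule can fire at the junctions of a closed application spine (closedness rules these out, since weakenings and variable-capturing contractions cannot occur at the top of closed terms).
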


\begin{proof}
\rule{0in}{0in}
\begin{itemize}
\item[(i)]
$\SNc \subseteq \SNc$ and the condition $\INH(\SNc)$ trivially hold.\\
For $\SAT_{\beta}(\SNc)$,
\sil{suppose that
	$$M\isub{N}{x} M_1 \ldots M_n \in \SNc \mbox{ and } N, M_1, \ldots, M_n \in \SNc.$$
We have to prove that
	$$(\lambda x.M)N M_1 \ldots M_n \in \SNc. $$
Since $M\isub{N}{x}$ is a subterm of a term in $\SNc$, we know that $M \in \SNc$. By assumption, $N, M_{1}, \ldots, M_{n} \in \SNc$, so the reductions inside of these terms terminate. After finitely many reduction steps, we obtain
	$$(\lambda x.M)N M_1 \ldots M_n \rightarrow \ldots \rightarrow (\lambda x.M')N' M'_1 \ldots M'_n$$
\noindent where $M \rightarrow M',\; \; N \rightarrow N', \;\; M_{1} \rightarrow M'_{1}, \ldots, M_{n} \rightarrow M'_{n}.$ After contracting $(\lambda x.M')N' M'_1 \ldots M'_n$ to $M'\isub{N'}{x} M'_1 \ldots M'_n$, we obtain a reduct of $M\isub{N}{x} M_1 \ldots M_n \in \SNc$. Hence, also the initial term $(\lambda x.M)N M_1 \ldots M_n \in \SNc. $}
\item[(ii)]
First, we prove that $\vM \fsto \vN \subseteq \SNc$.
Suppose that $M \in \vM \fsto \vN$. 
Then, for all $N \in \vM,\; MN \in \vN$. Since $\vM$ is $\mathcal{R}$-saturated, $\INH(\vM)$ holds and we have that
$$(\exists n \geq 0) \; (\forall p \geq n) \;  \lambda x_1 \ldots \lambda x_p.\lambda y.y \in \vM, \mbox{ hence }$$
$$(\exists n \geq 0) \; (\forall p \geq n) \;  M(\lambda x_1 \ldots \lambda x_p.\lambda y.y) \in \vN \subseteq \SNc.$$
From here we can deduce that $M \in \SNc$.\\ 

Next, we prove that $\INH(\vM \fsto \vN)$ holds \silv{i.e.  
\[(\exists n \geq 0) \; (\forall p \geq n) \; \lambda x_1 \ldots \lambda x_p.\lambda y.y \in \vM \fsto \vN.\]
By the induction hypothesis $\vN$ is $\mathcal{R}$-saturated, hence $\INH(\vN)$ and $\SAT(\vN)$ hold.}
Since $\INH(\vN)$ holds, we have that
$$(\exists n \geq 0) \; (\forall p \geq n) \;  \lambda x_1 \ldots \lambda x_p.\lambda y.y \in \vN.$$ By taking $p=n+1$ we obtain that for all $N \in \vM$,
$$(\lambda x_1 \ldots \lambda x_n \lambda x_{n+1}.\lambda y.y)N  \rightarrow \lambda x_1 \ldots \lambda x_n \lambda y.y$$ 
\silv{By $\INH(\vN)$ we get that $\lambda x_1 \ldots \lambda x_n \lambda y.y \in \vN$ and then by $\SAT(N)$ we get that $(\lambda x_1 \ldots \lambda x_n \lambda x_{n+1}.\lambda y.y)N \in \vN$.
\[(\lambda x_1 \ldots \lambda x_n \lambda x_{n+1}.\lambda y.y) \in \vM \fsto \vN.
\]
Similarly, we can prove the above property for all $p \geq n+1$, i.e.
\[
(\lambda x_1 \ldots \lambda x_p.\lambda y.y) \in \vM \fsto \vN.
\]
}

Finally, for $\SAT_{\beta}(\vM \fsto \vN)$, 
\sil{suppose that
	$$M\isub{N}{x} M_1 \ldots M_n \in \vM \fsto \vN \mbox{ and } N, M_1, \ldots, M_n \in \SNc.$$
This means that for all $P \in \vM$
	$$M\isub{N}{x} M_1 \ldots M_nP \in \vN.$$
But $\vN$ is $\mathcal{R}$-saturated, so $\SAT_{\beta}(\vN)$ holds and we have that for all $P \in \vN$
	$$(\lambda x.M)N M_1 \ldots M_nP \in \vN$$
This means that
	$(\lambda x.M)N M_1 \ldots M_n \in \vM \fsto \vN. $}
\item[(iii)]
$\vM \cap \vN \subseteq \SNc$ is straightforward.\\
For $\INH(\vM \cap \vN)$, since $\INH(\vM)$ and $\INH(\vN)$ hold  we have that
$$(\exists n_{1} \geq 0) \; (\forall p_{1} \geq n_{1}) \;  \lambda x_1 \ldots \lambda x_{p_{1}}.\lambda y.y \in \vM$$
$$(\exists n_{2} \geq 0) \; (\forall p_{2} \geq n_{2}) \;  \lambda x_1 \ldots \lambda x_{p_{2}}.\lambda y.y \in \vN.$$
By taking $n=max(n_{1},n_{2})$ we obtain
$$(\exists n \geq 0) \; (\forall p \geq n) \;  \lambda x_1 \ldots \lambda x_p.\lambda y.y \in \vM \cap \vN, \mbox{ i.e. }\INH(\vM \cap \vN) \mbox{ holds.}$$
$\SAT_{\beta}(\vM \cap \vN)$ is straightforward.

\item[(iv)]
By induction on the construction of $\varphi \in \mathsf{Types}$.
\begin{itemize}
    \item If $\varphi \equiv p$, $p$ is type atom, then $\ti{\varphi} = \SNc$, so it is $\mathcal{R}$-saturated using (i).
    \item If $\varphi \equiv \tA \to \tS$, then $\ti{\varphi} = \ti{\tA} \fsto \ti{\tS}$. Since $\ti{\tA}$ and $\ti{\tS}$ are $\mathcal{R}$-saturated by IH,  we can use (ii).
    \item If $\varphi \equiv \cap_{i}^{n} \tS_{i}$, then $\ti{\varphi} =\ti{\cap_{i}^{n} \tS_{i}}  = \cap_{i}^{n} \ti{\tS_{i}}$ and for all $i=1, \ldots, n, \ti{\tS_{i}}$ are $\mathcal{R}$-saturated by IH, so we can use (iii).
\end{itemize}
\end{itemize}
\end{proof}

We further define a {\em valuation of terms\/} $\tei{-}_{\rho}: \LR \to \LRc$ and the {\em semantic satisfiability relation\/} $\modelsr$ which connects the type interpretation with the term valuation.

\begin{defi}  \label{def:val}

Let $\rho : {\tt var} \to \LRc$ be a valuation of term variables in $\LRc$.
Then, for the term $M \in \LR$ with free variables $Fv(M) = \{x_{1}, \ldots, x_{n}\}$,
the \emph{term valuation} $\tei{-}_\rho : \LR \to \LRc$ is defined as follows
$$\tei{M}_{\rho} = M[\isubs{\rho(x_{1})}{x_{1}}, \ldots, \isubs{\rho(x_{n})}{x_{n}}].$$

\end{defi}

\begin{lem}
\label{lem:sub}
\rule{0in}{0in}
    \begin{itemize}
    \item[(i)]
    $\tei{MN}_{\rho} \equiv \tei{M}_{\rho} \tei{N}_\rho.$
    \item[(ii)]
$\tei{\lambda x. M}_{\rho} N    \silv{\to} \tei{ M}_{\rho(\isubs{N}{x})} $, where $N \in \LRc$.
    \item[(iii)]
    $\tei{\weak{x}{M}}_{\rho} \equiv \tei{M}_{\rho}.$
    \item[(iv)]
    $\tei{\cont{z}{x}{y}{M}}_{\rho} \equiv
    \tei{M}_{\rho(\isubs{N}{x},\isubs{N}{y})}$
    where $N=\rho(z) \in \LRc$.
    \end{itemize}
\end{lem}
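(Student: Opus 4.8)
The plan is to establish each clause by unfolding Definition~\ref{def:val}, under which $\tei{M}_\rho$ denotes the result of performing simultaneously the substitutions $\isub{\rho(z)}{z}$ for every $z \in Fv(M)$. The feature exploited throughout is that $\rho$ ranges over the \emph{closed} terms $\LRc$, so that $Fv(\rho(z)) = \emptyset$ for each $z$. This has two consequences used repeatedly: the various substituted terms share no free variables, so a simultaneous substitution may be carried out as a sequence of single substitutions in any order (a specialisation of the Substitution Lemma once all substituted terms are closed); and any resource decoration whose superscript is built from $Fv(\rho(z))$ collapses, since $\weak{\emptyset}{M} = M$ and $\cont{\emptyset}{x}{y}{M} = M$ by the empty-list convention. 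Clauses (iii) and (iv) are moreover vacuous unless $\w \in \mathcal{R}$ and $\co \in \mathcal{R}$, respectively, so all four proofs are uniform in $\mathcal{R}$.

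For (i), the application clause of Figure~\ref{fig:sub-lR} makes the simultaneous substitution defining $\tei{MN}_\rho$ distribute over the application, and Lemma~\ref{lem:xnotfree} shows that substituting $\rho(z)$ for a $z \notin Fv(M)$ leaves $M$ untouched; hence the left factor reduces to the substitution over $Fv(M)$ only, namely $\tei{M}_\rho$, and symmetrically for $N$. The overlap of $Fv(M)$ and $Fv(N)$ (which is empty when $\co \in \mathcal{R}$ and possibly nonempty otherwise) causes no trouble, since a shared variable receives the same value $\rho(z)$ in both factors. For (ii), since $Fv(\lambda x.M) = Fv(M)\setminus\{x\}$ and $x$ is bound, the abstraction clause pushes the substitution under the binder, giving $\lambda x.M'$, where $M'$ is $M$ with each free $z \neq x$ replaced by $\rho(z)$. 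Applying this to $N \in \LRc$ and firing $(\beta)$ gives $M'\isub{N}{x}$; because every $\rho(z)$ and $N$ is closed, this iterated substitution coincides with the simultaneous substitution of $M$ sending each $z \neq x$ to $\rho(z)$ and $x$ to $N$, i.e.\ $\tei{M}_{\rho(\isubs{N}{x})}$. The $\to$ in the statement records precisely this single $(\beta)$-step.

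For (iii), evaluating $\weak{x}{M}$ substitutes the closed term $\rho(x)$ for $x$; by the clause $(\weak{x}{M})\isub{N}{x} \triangleq \weak{Fv(N)\setminus Fv(M)}{M}$ together with $Fv(\rho(x)) = \emptyset$, the weakening vanishes and leaves $M$, after which the residual substitutions reconstruct $\tei{M}_\rho$. For (iv), write $N = \rho(z)$, a closed term; the contraction clause $(\cont{z}{x}{y}{M})\isub{N}{z} \triangleq \cont{Fv[N]}{Fv[N_1]}{Fv[N_2]}{M[N_1/x, N_2/y]}$ with $Fv(N) = \emptyset$ forces $N_1 = N_2 = N$ and all three variable lists empty, so the contraction collapses to the parallel double substitution $M[N/x, N/y]$; absorbing the remaining substitutions yields $\tei{M}_{\rho(\isubs{N}{x},\isubs{N}{y})}$.

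The only delicate point, and the one I expect to be the main obstacle, is the passage in (ii) and (iv) from iterated single substitutions to one simultaneous substitution over an enlarged domain. This is exactly where the closedness of the range of $\rho$ is indispensable: otherwise the renaming of free variables in the contraction clause, or commutation and capture issues in (ii), would block the identity. I would therefore isolate beforehand the remark that simultaneous substitution equals sequential substitution whenever all substituted terms are closed — a direct consequence of the Substitution Lemma — and invoke it uniformly in both clauses.
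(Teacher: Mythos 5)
Your proposal is correct and follows essentially the same route as the paper's proof: unfold Definition~\ref{def:val}, apply the substitution clauses of Figure~\ref{fig:sub-lR}, and use the closedness of $\rho$'s range ($Fv(\rho(z)) = \emptyset$) to collapse the weakening superscript in (iii), to force $N_1 = N_2 = N$ with empty variable lists in (iv), and to merge the iterated substitution with the simultaneous one in (ii). The paper makes the sequential-versus-simultaneous substitution point inline rather than as a separate remark, but the justification is identical to yours.
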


\begin{proof}
\rule{0in}{0in}
\begin{itemize}
	\item[(i)] Straightforward from the definition of substitution given in Figure~\ref{fig:sub-lR}.
	\item[(ii)]
	If  $Fv(\lambda x.M) = \{x_{1}, \ldots, x_{n}\}$, then\\
	$\tei{\lambda x.M}_{\rho} N \equiv
	  (\lambda x.M)[\isubs{\rho(x_{1})}{x_{1}}, \ldots, \isubs{\rho(x_{n})}{x_{n}}] N    \silv{\to}
        (M[\isubs{\rho(x_{1})}{x_{1}}, \ldots, \isubs{\rho(x_{n})}{x_{n}}])[\isubs{N}{x}]
    \equiv \\
	M[ \isubs{\rho(x_{1})}{x_{1}}, \ldots, \isubs{\rho(x_{n})}{x_{n}}, \isubs{N}{x}]$.\\
	The last equivalence holds, since all $\rho(x_{i})$ are closed terms so $x \not \in FV(\rho(x_{i}))$.
	\item[(iii)]
	If  $Fv(M) = \{x_{1}, \ldots, x_{n}\}$, then\\
	$\tei{\weak{x}{M}}_{\rho} \equiv
	  (\weak{x}{M})[\isubs{\rho(x)}{x}, \isubs{\rho(x_{1})}{x_{1}}, \ldots, \isubs{\rho(x_{n})}{x_{n}}] \equiv\\
	  \weak{Fv(\rho(x))}{M[\isubs{\rho(x_{1})}{x_{1}}, \ldots, \isubs{\rho(x_{n})}{x_{n}}]} \equiv \tei{M}_{\rho},$\\
	  since $Fv(\rho(x))=\emptyset$ because $\rho(x) \in \LRc$.
	\item[(iv)]
	If  $Fv(M) = \{x_{1}, \ldots, x_{n}\}$ and we denote $\rho(z)=N \in \LRc$, then\\
	$\tei{\cont{z}{x}{y}{M}}_{\rho} \equiv
	(\cont{z}{x}{y}{M})[\isubs{\rho(z)}{z}, \isubs{\rho(x_{1})}{x_{1}}, \ldots, \isubs{\rho(x_{n})}{x_{n}}] \equiv\\
	\cont{Fv(N)}{Fv(N_{1})}{Fv(N_{2})}{M}[\isubs{N_{1}}{x}, \isubs{N_{2}}{y}, \isubs{\rho(x_{1})}{x_{1}}, \ldots, 			 \isubs{\rho(x_{n})}{x_{n}}] \equiv\\
	M[\isubs{N}{x}, \isubs{N}{y}, \isubs{\rho(x_{1})}{x_{1}}, \ldots, \isubs{\rho(x_{n})}{x_{n}}] \equiv
	\tei{M}_{\rho(\isubs{N}{x},\isubs{N}{y})}$\\
	since $N \in \LRc$, hence $Fv(N)=\emptyset$, $N_{1}=N_{2}=N$ (no renaming takes place) and
	$Fv(N_{1})=Fv(N_{2})=\emptyset$.
\end{itemize}
\end{proof}

\begin{defi} \label{def:model}
\rule{0in}{0in}
\begin{itemize}
    \item [(i)] $\rho \modelsr M : \tA \quad \iff\ \quad \tei{M}_\rho \in \ti{\tA}$;
    \item [(ii)] $\rho \modelsr \Gamma \quad \iff\ \quad (\forall (x:\tA) \in \Gamma) \quad \rho(x)\in \ti{\tA}$;
    \item [(iii)] $\Gamma \modelsr M : \tA \quad \iff\ \quad (\forall \rho) \quad (\rho \modelsr \Gamma \Rightarrow \rho \modelsr M : \tA)$.
  \end{itemize}
\end{defi}


\begin{prop} [Soundness of $\lR \cap$] \label{prop:sound}
If $\Gamma \vdashr M:\tA$, then $\Gamma \modelsr M:\tA$.
\end{prop}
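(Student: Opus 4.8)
The plan is to prove the statement by induction on the derivation of $\Gamma \vdashr M:\tA$, unfolding the definition of $\modelsr$: fixing an arbitrary valuation $\rho$ with $\rho \modelsr \Gamma$, I must show $\tei{M}_{\rho} \in \ti{\tA}$. Throughout I would rely on two facts established earlier. First, by Proposition~\ref{prop:saturSets}(iv) every $\ti{\varphi}$ is $\mathcal{R}$-saturated, hence contained in $\SNc$ and closed under the backward $\beta$-step expressed by $\SAT_{\beta}$. Second, a bookkeeping observation about bases: since $\ti{\cap_{i}^{n}\tS_i} = \cap_{i}^{n}\ti{\tS_i}$ and $\cap_{i}^{n}\ti{\tS_i} \subseteq \ti{\tS_j}$, a valuation satisfying a combined basis $\Gamma \unc (\Delta_1 \sqcup \ldots \sqcup \Delta_n)$ automatically satisfies each constituent $\Gamma, \Delta_1, \ldots, \Delta_n$; this is what lets me feed the single $\rho$ into the several induction hypotheses arising from one rule.

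The base cases and the administrative rules are then direct. For $(Ax_{iw})$ and $(Ax_{ew})$, $\tei{x}_{\rho} = \rho(x)$, and $\rho \modelsr (\Gamma,x:\cap_{i}^{n}\tS_i)$ gives $\rho(x) \in \ti{\cap_{i}^{n}\tS_i} \subseteq \ti{\tS_i}$. For $(Weak)$, Lemma~\ref{lem:sub}(iii) yields $\tei{\weak{x}{M}}_{\rho} \equiv \tei{M}_{\rho}$, so the induction hypothesis on the premise closes the case. For $(Cont)$, Lemma~\ref{lem:sub}(iv) gives $\tei{\cont{z}{x}{y}{M}}_{\rho} \equiv \tei{M}_{\rho(\isubs{N}{x},\isubs{N}{y})}$ with $N = \rho(z)$; since $\rho(z) \in \ti{\tA \cap \tB} = \ti{\tA} \cap \ti{\tB}$, the modified valuation satisfies $\Gamma, x:\tA, y:\tB$, and the induction hypothesis applies. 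For $(\to_E)$, Lemma~\ref{lem:sub}(i) gives $\tei{MN}_{\rho} \equiv \tei{M}_{\rho}\,\tei{N}_{\rho}$; the induction hypotheses give $\tei{M}_{\rho} \in (\cap_{i}^{n}\ti{\tT_i}) \fsto \ti{\tS}$ and $\tei{N}_{\rho} \in \ti{\tT_i}$ for every $i$, hence $\tei{N}_{\rho} \in \cap_{i}^{n}\ti{\tT_i}$, and the definition of $\fsto$ delivers $\tei{M}_{\rho}\tei{N}_{\rho} \in \ti{\tS}$.

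The crux is the abstraction rule $(\to_I)$, with conclusion $\Gamma \vdashr \lambda x.M:\tA \to \tS$ from $\Gamma, x:\tA \vdashr M:\tS$. Here I must show $\tei{\lambda x.M}_{\rho} \in \ti{\tA \to \tS} = \ti{\tA} \fsto \ti{\tS}$, i.e. $\tei{\lambda x.M}_{\rho}N \in \ti{\tS}$ for every $N \in \ti{\tA}$. By Lemma~\ref{lem:sub}(ii), $\tei{\lambda x.M}_{\rho}N \to \tei{M}_{\rho(\isubs{N}{x})}$; the extended valuation $\rho(\isubs{N}{x})$ satisfies $\Gamma, x:\tA$ precisely because $N \in \ti{\tA}$, so the induction hypothesis gives $\tei{M}_{\rho(\isubs{N}{x})} \in \ti{\tS}$. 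Finally, since $N \in \ti{\tA} \subseteq \SNc$ and $\ti{\tS}$ is $\mathcal{R}$-saturated, the backward closure $\SAT_{\beta}(\ti{\tS})$ (instantiated with $n=0$) lifts membership from the $\beta$-reduct back to $\tei{\lambda x.M}_{\rho}N$, closing the case.

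The main obstacle I anticipate is precisely this abstraction case, where three ingredients must be aligned: that the term valuation of an abstraction behaves syntactically as expected (supplied by Lemma~\ref{lem:sub}, which in turn relies on $\rho$ producing \emph{closed} terms so that no variable capture or stray weakening occurs), that the reduct lands in the interpretation via the induction hypothesis, and that $\mathcal{R}$-saturation legitimately runs the $\beta$-step backwards. The genuinely new point compared with the pure $\lambda$-calculus is that all of this must hold uniformly in the resource set $\mathcal{R}$; this uniformity is exactly what Proposition~\ref{prop:saturSets} secures, so that neither explicit weakening nor explicit contraction demands a separate argument beyond the two valuation identities of Lemma~\ref{lem:sub}(iii)--(iv).
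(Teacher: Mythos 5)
Your proposal is correct and follows essentially the same route as the paper: induction on the typing derivation, the valuation identities of Lemma~\ref{lem:sub} for the application, weakening and contraction cases, and, in the crucial $(\to_I)$ case, the extended valuation $\rho(\isubs{N}{x})$ combined with $\SAT_{\beta}$ (at $n=0$, using $N \in \ti{\tA} \subseteq \SNc$) to pull membership back from the reduct $\tei{M}_{\rho(\isubs{N}{x})}$ to $\tei{\lambda x.M}_{\rho}N$. Your explicit bookkeeping remark about $\Gamma \unc (\Delta_1 \sqcup \ldots \sqcup \Delta_n)$ is exactly the case analysis the paper spells out inside its $(\to_E)$ case.
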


\begin{proof}
By induction on the derivation of $\Gamma \vdashr M:\tA$.
\begin{itemize}
\item
The last rule applied is $(Ax_{iw})$.
Suppose that $\rho \models \Gamma, x:\cap_{1}^{n} \tS_{i}$.
From here we deduce that $\rho(x) \in \ti{\cap_{1}^{n} \tS_{i}} = \cap_{1}^{n} \ti{\tS_{i}}$
which means that for all $i \in \{1, \ldots, n\}, \rho(x) \in \ti{\tS_{i}}$.
\item
$(Ax_{ew})$. Similar to the previous case.
\item
The last rule applied is $(\to_{I})$, i.e.,\
    $$\Gamma, x:\tA \vdashr M:\tS \Rightarrow
      \Gamma\vdashr \lambda x.M:\tA \to \tS.$$
By the IH $\Gamma, x:\tA \modelsr M: \tS$.
Suppose that $\rho \modelsr \Gamma$ and we want to show that $\rho \modelsr \lambda x.M: \tA \to \tS$.
We have to show that
$$\tei{\lambda x.M}_{\rho} \in \ti{\tA \to \tS} = \ti{\tA} \fsto \ti{\tS} \mbox{ i.e.}$$
$$\forall N \in \ti{\tA}. \; \tei{\lambda x.M}_{\rho}N \in \ti{\tS}.$$
Suppose that $N \in \ti{\tA}$.
Then, let us consider a new valuation $\rho'=\rho(\isubs{N}{x})$, which can be constructed because $N$ is a closed term. We have that $\rho' \modelsr \Gamma, x:\alpha$ since $\rho \modelsr \Gamma$, $x \not\in \Gamma$ and $\rho'(x)=N \in \ti{\tA}$. \silv{Then by the IH $\rho' \modelsr M:\sigma$, hence} we can conclude that $\tei{M}_{\rho'} \in \ti{\tS}$. Applying Lemma~\ref{lem:sub}(ii) we get $\tei{\lambda x.M}_{\rho}N    \silv{\to} \tei{M}_{\rho'}$.
Since $\tei{M}_{\rho'} \in \ti{\tS}$ and $\ti{\tS}$ is saturated, 
we obtain $\tei{\lambda x.M}_{\rho}N \in \ti{\tS}$.
\item
The last rule applied is $(\to_{E})$, i.e.,\
    $$ \hspace*{12mm} \Gamma \vdashr M: \cap_{1}^{n} \tT_i \to \tS,\;\;
    \Delta_{1} \vdashr N:\tT_1 \;\; \ldots \;\;
    \Delta_{n} \vdashr N:\tT_n \Rightarrow
    \Gamma \unc (\Delta_{1} \sqcup \ldots \sqcup \Delta_{n}) \vdashr MN:\tS.$$
By the IH $\Gamma \modelsr M:\cap_{1}^{n} \tT_i \to \tS$,
$\Delta_{1} \modelsr N : \tT_{1}, \ldots, \Delta_{n} \modelsr N : \tT_{n}$.
Suppose that $\rho \modelsr \Gamma \unc (\Delta_{1} \sqcup \ldots \sqcup \Delta_{n})$, \silv{in order to show $\tei{MN}_{\rho} \in \ti{\tS}$.}
Then  for all $x:\tA \in \Gamma \unc (\Delta_{1} \sqcup \ldots \sqcup \Delta_{n}), \; \rho(x) \in \tei{\tA}$.
This means that\\
\hspace*{3mm} if $x:\tA \in \Gamma$ and $x \not\in Dom(\Delta_{1}) \cap \ldots \cap Dom(\Delta_{n})$, then  $\rho(x) \in \tei{\tA}$;\\
\hspace*{3mm} if there is $i$, such that $x:\tA_{i} \in \Delta_{i}$ and $x \not\in Dom(\Gamma) \cap Dom(\Delta_{j}), j\not= i$, then $\rho(x) \in \tei{\tA_{i}}$;\\
\hspace*{3mm} if
$x: \tA \cap \tA_{1} \cap \ldots \cap \tA_{n}$ and $x:\tA \in \Gamma$, $x:\tA_{1} \in \Delta_{1}, \ldots, x:\tA_{n} \in \Delta_{n}$, then $\rho(x) \in \tei{\tA} \cap \tei{\tA_{1}} \cap \ldots \cap \tei{\tA_{n}}$.\\
From this we deduce that
$\rho \modelsr \Gamma, \rho \modelsr \Delta_{1}, \ldots, \rho \modelsr \Delta_{n}$.
From $\rho \modelsr \Gamma$, using the IH we deduce that $\tei{M}_{\rho} \in \ti{\cap_{1}^{n} \tT_{i} \to \tS} \silv{= \ti{\cap_{1}^{n} \tT_{i}} \to \ti{\tS}}$.
From $\rho \modelsr \Delta_{1}, \ldots, \rho \modelsr \Delta_{n}$,
using the IH we deduce that $\tei{N}_{\rho} \in \ti{\tT_{1}}, \ldots, \tei{N}_{\rho} \in \ti{\tT_{n}}$.
This means that  $\tei{N}_{\rho} \in \cap_{i}^{n}\tei{\tT_{i}}_{\rho} = \tei{\cap_{i}^{n}\tT_{i}}_{\rho}$. Using Lemma~\ref{lem:sub}(i) we obtain that $\tei{M}_{\rho} \tei{N}_{\rho}  = \tei{MN}_{\rho} \in \ti{\tS}$.
\item
The last rule applied is $(Weak)$, i.e.,\
$$\Gamma \vdashr M:\tA \Rightarrow \Gamma, x:\tB \vdashr \weak{x}{M}:\tA.$$
By the IH $\Gamma \modelsr M:\tA$.
Suppose that $\rho \modelsr \Gamma, x:\tB$ $\Leftrightarrow$  $\rho \modelsr \Gamma$ and $\rho \modelsr x:\tB$. From $\rho \modelsr \Gamma$ and $\Gamma \modelsr M:\tA$ we obtain $\tei{M}_{\rho} \in \ti{\tA}$.
But $\tei{\weak{x}{M}}_{\rho} = \tei{M}_{\rho}$ by Lemma~\ref{lem:sub}(iii), hence $\tei{\weak{x}{M}}_{\rho} \in \ti{\tA}$.
\item
The last rule applied is $(Cont)$, i.e.,\
$$\Gamma, x:\tA, y:\tB \vdashr M:\tS \Rightarrow \Gamma, z:\tA \cap \tB \vdashr \cont{z}{x}{y}{M}:\tS.$$
By the IH $\Gamma, x:\tA, y:\tB \modelsr M:\tS$.
Suppose that $\rho \modelsr \Gamma, z:\tA \cap \tB$, in order to prove $\tei{\cont{z}{x}{y}{M}}_{\rho} \in \ti{\tS}$. This means that $\rho \modelsr \Gamma$ and $\rho \modelsr z:\tA \cap \tB$ $\Leftrightarrow$ $\rho(z) \in \ti{\tA} \mbox{ and } \rho(z) \in \ti{\tB}$.
For the sake of simplicity let $\rho(z) \equiv N \in \LRc$. We define a new valuation $\rho'$ such that $\rho' = \rho (\isubs{N}{x}, \isubs{N}{y})$.
Then $\rho' \modelsr \Gamma, x:\tA, y:\tB$ since $x,y \not \in Dom(\Gamma)$, $N \in \ti{\tA}$ and $N \in \ti{\tB}$.
By the IH $\tei{M}_{\rho'} \in \ti{\tS}$.
By the definition of term valuation (Definition~\ref{def:val}) and Lemma~\ref{lem:sub}(iv)
we obtain
$\tei{M}_{\rho'} = \tei{M}_{\rho(\isubs{N}{x}, \isubs{N}{y})} = \tei{\cont{z}{x}{y}{M}}_{\rho}$,
since $\rho(z)=N$.
Hence, $\tei{\cont{z}{x}{y}{M}}_{\rho} \in \ti{\tS}$.

\end{itemize}
\end{proof}

\begin{lem}
\label{lem:SN}
Let $\tei{-}_\rho : \LR \to \LRc$ be a term valuation.
Then
$$\tei{M}_{\rho} \in \SN \;\;\Rightarrow \;\; M \in \SN.$$
\end{lem}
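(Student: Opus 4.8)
The plan is to reduce the statement to the single-substitution proposition already established above, namely that $M\isub{N}{x}\in\SNR$ implies $M\in\SNR$. By Definition~\ref{def:val}, if $Fv(M)=\{x_1,\dots,x_n\}$ then the valuation $\tei{M}_\rho = M[\isubs{\rho(x_1)}{x_1},\dots,\isubs{\rho(x_n)}{x_n}]$ is a \emph{parallel} substitution of \emph{closed} terms. First I would observe that, because each $\rho(x_i)\in\LRc$ has no free variables, this parallel substitution coincides with the iterated substitution
\[
\tei{M}_\rho \;=\; M\isub{\rho(x_1)}{x_1}\isub{\rho(x_2)}{x_2}\cdots\isub{\rho(x_n)}{x_n},
\]
and that all the side conditions required by the definition of substitution (Figure~\ref{fig:sub-lR}) are met at every stage: $x_k\notin Bv(\cdot)$ by Barendregt's convention, the $\co$-proviso $Fv(\cdot)\cap Fv(\rho(x_k))=\emptyset$ holds trivially since $\rho(x_k)$ is closed, and $x_k$ remains free after the earlier substitutions (which only replace the distinct variables $x_1,\dots,x_{k-1}$), so the $[x_k\in Fv(\cdot)]_\w$ condition is satisfied.

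Writing $M_0=M$ and $M_k=M_{k-1}\isub{\rho(x_k)}{x_k}$ for $1\le k\le n$, I then have $M_n=\tei{M}_\rho$. The argument is a finite downward induction on $k$: starting from the hypothesis $\tei{M}_\rho=M_n\in\SN$, apply the single-substitution proposition with $M_{k-1}$ in the role of the term, $\rho(x_k)$ in the role of the substituted term, and $x_k$ in the role of the variable; from $M_k=M_{k-1}\isub{\rho(x_k)}{x_k}\in\SN$ it delivers $M_{k-1}\in\SN$. Peeling the substitutions off one at a time, from $k=n$ down to $k=1$, yields $M_0=M\in\SN$, which is exactly the conclusion.

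The only genuinely delicate point is the first one: justifying that the parallel valuation can be rewritten as a chain of single substitutions to which the earlier proposition applies. This rests entirely on the closedness of the $\rho(x_i)$ — since they carry no free variables, there is no interference or capture between successive substitutions, the disjointness side condition is automatic, and the weakening/erasure bookkeeping in the substitution clauses (the $Fv(N)\setminus Fv(M)$ annotations) never introduces new free variables, so each $x_k$ stays free exactly until it is its turn to be substituted. Once this normalisation of the valuation into an iterated substitution is in place, the remainder is a routine iteration of an already-proved result, and I expect no further obstacle.
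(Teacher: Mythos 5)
Your proof is correct and is in essence the paper's own argument made explicit: the paper's proof of this lemma merely observes that $\tei{M}_{\rho}$ is a substitution instance of $M$ and appeals to this as a ``well-known property'' that survives the addition of contraction and weakening, with the single-substitution proposition of Section~\ref{sec:sn_subs} ($M\isub{N}{x}\in\SNR \Rightarrow M\in\SNR$) as its intended backing. Your decomposition of the parallel closed-term valuation into an iterated chain of single substitutions, together with the checks that the $\co$- and $\w$-side conditions hold at every stage, supplies exactly the detail the paper leaves implicit.
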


\begin{proof}
\silv{The proof is straightforward since although the terms $M$ and $\tei{M}_{\rho}$ are different, the latter is an instantiation of the former $\tei{M}_{\rho} = M[\isubs{N}{x}, \isubs{N}{y}, \isubs{\rho(x_{1})}{x_{1}}, \ldots, \isubs{\rho(x_{n})}{x_{n}}]$. This is a well-known property in $\lambda$-calculus and adding contraction and weakening still preserves this property. For this reason, the strong normalisation property is preserved.}
\end{proof}
\begin{thm} [SN for $\lR \cap$] \label{th:typ=>SN}
If $\Gamma \vdashr M:\tA$, then $M$ is strongly normalizing, i.e. $M \in \SN$.
\end{thm}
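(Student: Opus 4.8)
The plan is to close the reducibility argument that has been set up: combine Soundness (Proposition~\ref{prop:sound}), the saturation of type interpretations (Proposition~\ref{prop:saturSets}) and the reflection Lemma~\ref{lem:SN}, by evaluating $M$ under a cleverly chosen valuation. The only real work is to produce a valuation $\rho$ that satisfies the basis $\Gamma$ while mapping variables into $\LRc$, i.e. into \emph{closed} terms, since the trivial ``identity'' valuation $\rho(x)=x$ is not available here (a variable is not closed).

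First I would construct such a $\rho$. For each declaration $(x:\tB)\in\Gamma$, the interpretation $\ti{\tB}$ is $\mathcal{R}$-saturated by Proposition~\ref{prop:saturSets}(iv), so the property $\INH(\ti{\tB})$ holds; in particular $\ti{\tB}$ is non-empty, as it contains closed strongly normalising terms of the form $\lambda x_1\ldots\lambda x_p.\lambda y.y$ for all sufficiently large $p$. I would pick one such term and set $\rho(x)$ to be it, and define $\rho$ arbitrarily (on some fixed closed inhabitant) for variables outside $Dom(\Gamma)$. By construction $\rho(x)\in\ti{\tB}$ for every $(x:\tB)\in\Gamma$, which is exactly $\rho\modelsr\Gamma$ in the sense of Definition~\ref{def:model}(ii). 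I should note here that the substitution underlying $\tei{M}_\rho$ is well defined: since every $\rho(x)$ is closed, the side conditions $[Fv(M)\cap Fv(N)=\emptyset]_{\co}$ of Figure~\ref{fig:sub-lR} hold trivially, and the condition $[x\in Fv(M)]_{\w}$ holds because $\rho$ substitutes precisely for the free variables of $M$.

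Next I would apply Soundness. From $\Gamma\vdashr M:\tA$, Proposition~\ref{prop:sound} gives $\Gamma\modelsr M:\tA$; instantiating Definition~\ref{def:model}(iii) with the valuation $\rho$ just built, and using $\rho\modelsr\Gamma$, yields $\rho\modelsr M:\tA$, that is $\tei{M}_\rho\in\ti{\tA}$. By construction of the type interpretation (Definition~\ref{def:typeInt}) every $\ti{\varphi}$ is a subset of $\SNc$, so $\tei{M}_\rho\in\SNc\subseteq\SN$. Finally, Lemma~\ref{lem:SN} transfers strong normalisation from the instantiated term back to $M$, giving $M\in\SN$, as required.

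The argument is short because all the substance has been front-loaded into the auxiliary results. The step I expect to be the genuine obstacle is the one that has already been handled, namely establishing saturation of $\ti{\tA}$ and in particular the $\INH$ property, since it is exactly $\INH$ that rescues the construction of a $\Gamma$-satisfying valuation inside the closed terms $\LRc$; without guaranteed inhabitants one could not evaluate $M$ at all. A secondary point worth stating explicitly is that the whole proof is uniform in $\mathcal{R}\subseteq\{\co,\w\}$: the weakening and contraction cases were discharged once and for all inside Soundness (via Lemma~\ref{lem:sub}(iii),(iv)), so no per-calculus case analysis is needed here.
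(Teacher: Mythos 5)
Your proposal is correct and follows essentially the same route as the paper's proof: invoke Soundness (Proposition~\ref{prop:sound}), build a concrete valuation $\rho_0$ sending each $x_i\in Dom(\Gamma)$ to a closed inhabitant $\lambda x_1\ldots\lambda x_{n_i}.\lambda y.y$ supplied by $\INH$ via Proposition~\ref{prop:saturSets}(iv), conclude $\tei{M}_{\rho_0}\in\ti{\tA}\subseteq\SNc$, and transfer back to $M$ by Lemma~\ref{lem:SN}. Your additional remarks on the well-definedness of the substitution underlying $\tei{M}_\rho$ and on uniformity in $\mathcal{R}$ are sound but not needed beyond what the paper records.
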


\begin{proof}
Suppose $\Gamma \vdashr M:\tA$.  
By Proposition~\ref{prop:sound}\; $\Gamma \modelsr M:\tA$. According to Definition~\ref{def:model}(iii), this means that for all $\rho$ such that $\rho \modelsr \Gamma$ we have that $\rho \modelsr M : \tA$.
\silv{Suppose $\Gamma = \{x_1:\alpha_1, \ldots x_k:\alpha_1k \}$. Since for each $\alpha_i$ by Proposition~\ref{prop:saturSets}(iv), $\ti{\tA_i}$ is saturated, this implies that the property $\INH(\ti{\alpha_i})$. Thus for each $\ti{\alpha_i}$
\[
(\exists n_i \geq 0) \; (\forall p \geq n_i) \;
                        \lambda x_1 \ldots \lambda x_p.\lambda y.y \in \ti{\alpha_i}.
\]
Let us obtain a particular valuation $\rho_0$ so that for each $x_i \in Dom(\Gamma)$ 
$$\rho_0(x_i) = \lambda x_1 \ldots \lambda x_{n_i}.\lambda y.y$$
Since $\INH(\ti{\alpha_i})$ it follows that  $\rho_{0}(x_i) \in \ti{\tA_i}$ for all $x_i:\tA_i \in \Gamma$.} Therefore, by Definition~\ref{def:model}(ii) $\rho_{0} \modelsr \Gamma$ and by Definition~\ref{def:model}(iii) $\rho_{0} \modelsr M: \tA$. Now by Definition~\ref{def:model}(i) we can conclude that $\tei{M}_{\rho_{0}} \in \ti{\tA}$. By Proposition~\ref{prop:saturSets}(iv) $\ti{\tA} \subseteq \SNc$, so by applying Lemma~\ref{lem:SN} we get $M \in \SNc$.

\end{proof}

\sil{There are a few differences with respect to the traditional reducibility method presented in~\cite{bare92}.
First of all, the interpretation of types only contains closed $\lR$-terms, as opposed to all $\lambda$-terms. Next, instead of $\VAR$ condition which ensures that the saturated sets contain variables, we introduce the condition $\INH$ which ensures that the terms of the form $\lambda_{1} \ldots \lambda_{p}.\lambda y.y$ belong to all $\mathcal{R}$-saturated sets. Further, valuations map variables to closed terms and term valuations map $\lR$-terms to closed $\lR$-terms.. Finally, in the proof of Theorem~\ref{th:typ=>SN}, instead of the valuation $\rho_0(x) = x$ which maps all the variables to themselves, we need a valuation $\rho_{0}(x) = \lambda x_{1} \ldots \lambda x_{n}.\lambda y.y.$ that maps every variable to a closed term (which is provided by $\INH(\ti{\beta})$).}



\subsection{Typeability $\Rightarrow$ SN in $\lGR\cap$}
\label{sec:typeSN_Gtz}

In this subsection, we prove the strong normalisation property of
the $\lGR$-calculi with intersection types. The termination is
proved by showing that the reductions on the sets $\LGR$ of the
typeable $\lGR$-expressions are included in particular
well-founded relations, which we define as the lexicographic
products of several well-founded component relations. The first
ones are based on the mappings of $\lGR$-expressions into
$\lR$-terms.
We show that these mappings preserve types
and that all $\lGR$-reductions can be simulated by the reductions
or identities of the corresponding $\lR$-calculi.
The other well-founded orders are based on the introduction of
quantities
designed to decrease a global measure associated with specific
$\lGR$-expressions during the computation.

\begin{defi}
\label{def:mapping} The mappings $\intt{\;\;}:\TGR\;\to\;\LR$ are
defined together with the auxiliary mappings $\intc{\;\;}:\KGR \;
\to \;(\LR \; \to \; \LR)$ in the following way:
$$
\begin{array}{lclclcl}
\intt{x} & = & x & & \intc{\bindx{t}}(M) & = & (\lambda x. \intt{t})M\\
\intt{\lambda x.t} & = & \lambda x. \intt{t} & & \intc{t::k}(M) & = & \intc{k}(M\intt{t})\\
\intt{\weak{x}{t}} & = & \weak{x}{\intt{t}} & & \intc{\weak{x}{k}}(M) & = & \weak{\jel{\{x\}\setminus Fv(M)}}{\intc{k}(M)}\\
\intt{\cont {x}{y}{z}{t}} & = & \cont{x}{y}{z}{\intt{t}} & & \intc{\cont {x}{y}{z}{k}}(M) & = &\cont{x}{y}{z}{\intc{k}(M)} \\
\intt{tk} & = & \intc{k}(\intt{t})\\
\end{array}
$$
\end{defi}

\begin{lem}
\rule{0in}{0in}
\begin{itemize}
\item[(i)]\label{lemma:pres-of-FV}
 $Fv(t) = Fv(\intt{t})$, for $t \in \TGR$.
\item[(ii)] \label{lemma:int-of-sub} $\intt{v\isub{t}{x}} =
\intt{v}\isub{\intt{t}}{x}$, for $v,t \in \TGR$.
\end{itemize}
\end{lem}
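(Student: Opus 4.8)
The plan is to prove both statements by simultaneous structural induction over $\lGR$-terms and $\lGR$-contexts, in each case strengthening the term statement with a companion statement about the auxiliary mapping $\intc{-}$ so that the induction goes through. Since $\intt{tk} = \intc{k}(\intt{t})$, any induction on terms must carry information about contexts, and the natural invariant for $\intc{-}$ is that it adds exactly the free variables of its argument $M$.

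For (i) I would prove simultaneously: $Fv(\intt{t}) = Fv(t)$ for $t \in \TGR$, and $Fv(\intc{k}(M)) = Fv(k) \cup Fv(M)$ for $k \in \KGR$ and every $M \in \LR$. The term cases $x$, $\lambda x.t$, $\weak{x}{t}$, $\cont{x}{y}{z}{t}$ are immediate from the defining clauses together with the $Fv$ equations, while $tk$ uses the context statement with $M := \intt{t}$. For contexts, $\bindx t$ and $t::k$ reduce to the inductive hypotheses via $Fv((\lambda x.N)M) = (Fv(N)\setminus\{x\})\cup Fv(M)$, and the resource clauses $\weak{x}{k}$, $\cont{x}{y}{z}{k}$ are settled by a short case split on whether the weakened/contracted variable already occurs in $M$, which matches exactly the $\{x\}\setminus Fv(M)$ bookkeeping built into the definition of $\intc{-}$.

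For (ii) I would again set up a companion statement for contexts, namely $(\intc{k}(M))\isub{\intt{t}}{x} = \intc{k\isub{t}{x}}(M\isub{\intt{t}}{x})$ for all suitable $M$ (under the standing Barendregt convention), and prove both by simultaneous induction on $v$ and $k$. The classical cases (variable, abstraction with $x\neq y$, cut, cons, selection) follow by unfolding the two substitution definitions (Figures~\ref{fig:sub-lR} and~\ref{fig:sub-lGR}) and applying the induction hypotheses. The resource cases are where part (i) becomes essential: for $v=\weak{x}{v'}$ one computes $\intt{(\weak{x}{v'})\isub{t}{x}} = \weak{Fv(t)\setminus Fv(v')}{\intt{v'}}$, whereas the $\lR$-side yields $\weak{Fv(\intt{t})\setminus Fv(\intt{v'})}{\intt{v'}}$, and the two agree precisely because $Fv(\intt{t})=Fv(t)$ and $Fv(\intt{v'})=Fv(v')$ by (i); the remaining weakening clauses invoke (i) in the same manner.

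The main obstacle is the contraction clause on the substituted variable, $v = \cont{x}{x_1}{x_2}{v'}$. Here $\lGR$-substitution produces $\cont{Fv(t)}{Fv(t_1)}{Fv(t_2)}{v'\isub{t_1}{x_1}\isub{t_2}{x_2}}$ with $t_1,t_2$ fresh renamings of $t$, while the $\lR$-side produces $\cont{Fv[\intt{t}]}{Fv[(\intt{t})_1]}{Fv[(\intt{t})_2]}{\intt{v'}[(\intt{t})_1/x_1,(\intt{t})_2/x_2]}$ with a \emph{parallel} substitution. To reconcile them I would (a) observe that $\intt{-}$ commutes with variable renamings, so $\intt{t_i} = (\intt{t})_i$ and, by (i), $Fv(t_i)=Fv((\intt{t})_i)$; (b) use the extended clause $\intt{\cont{X}{Y}{Z}{M}} = \cont{X}{Y}{Z}{\intt{M}}$ obtained by iterating the definition; and (c) note that since $(\intt{t})_1$ and $(\intt{t})_2$ have disjoint fresh free variables and $x_1\neq x_2$, the parallel substitution coincides with the two-fold sequential substitution $\intt{v'}\isub{(\intt{t})_1}{x_1}\isub{(\intt{t})_2}{x_2}$, which is exactly what two applications of the induction hypothesis deliver. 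Matching these normal forms, together with the lighter $\cont{y}{y_1}{y_2}{v'}$ case for $y\neq x$, completes the argument.
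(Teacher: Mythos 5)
Your proposal is correct and follows the same route the paper takes, since the paper's entire proof is the one-line assertion that the lemma ``directly yields from Definition~\ref{def:mapping} and the substitution definitions,'' i.e.\ an implicit structural induction unfolding those definitions. You have simply made that induction explicit, and your strengthened companion invariants for contexts --- $Fv(\intc{k}(M)) = Fv(k) \cup Fv(M)$ and the substitution commutation for $\intc{-}$ --- together with the careful handling of the $\cont{x}{x_1}{x_2}{v'}$ renaming/parallel-substitution case are exactly the bookkeeping the paper's assertion tacitly relies on.
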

\begin{proof} The proof directly yields from 
Definition~\ref{def:mapping} and the substitution definitions.

\end{proof}

We prove that the mappings $\intt{\;\;}$ and $\intc{\;\;}$
preserve types. In the sequel, we use $\vdash_{\lR}$ to denote
derivations in $\lR \cap$. The notation $\LRvdash{`G}{\tA}$ stands
for $\{M \; \mid \; M \in \LR\;\; \&\;\; `G \vdash_{\lR} M:\tA\}$.

\begin{prop}[Type preservation with $\intt{\;\;}$]\label{prop:soundness}
\rule{0in}{0in}
\begin{itemize}
\item[(i)] If $\;`G \vdashr t:\tS$, then $`G \vdash_{\lR}
\intt{t}:\tS$.
\item[(ii)] If $\;`G;\jel{\cap^n_j}\tT_j \vdashr k:\tS$,
then $\intc{k}:\LRvdash{\jel{`G'_j}}{\tT_j} \to \LRvdash{`G \unc `G'
}{\tS}$, \jel{for all} $j \sil{\in \{1, \ldots, n\}}$ and for some $\jel{`G'= `G'_1 \sqcup ... \sqcup `G'_n}$.
\end{itemize}
\end{prop}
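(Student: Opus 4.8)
The plan is to establish (i) and (ii) \emph{simultaneously}, by induction on the structure of the typing derivations $\Gamma \vdashr t:\tS$ and $\Gamma;\cap_j^n\tT_j \vdashr k:\tS$. This matches the mutual recursion of Definition~\ref{def:mapping}, in which the clause $\intt{tk}=\intc{k}(\intt{t})$ calls $\intc{\;}$, while $\intc{\bindx t}(M)=(\lambda x.\intt{t})M$ and $\intc{t::k}(M)=\intc{k}(M\intt{t})$ call $\intt{\;}$. Every rule of $\lGR\cap$ has premises on strictly smaller expressions, so the induction is well founded. I read (ii) as: for any family of bases $(\Gamma'_j)_{j=1}^n$ with a common domain and $\Gamma'=\Gamma'_1\sqcup\ldots\sqcup\Gamma'_n$, if an $\lR$-term $M$ satisfies $\Gamma'_j\vdash_{\lR}M:\tT_j$ for all $j$, then $\Gamma\unc\Gamma'\vdash_{\lR}\intc{k}(M):\tS$; the ``for some $\Gamma'$'' merely records that these input bases are whatever the argument fed to $\intc{k}$ happens to carry.

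For (i), the cases of the axioms, $(\to_R)$, $(Weak_t)$ and $(Cont_t)$ are direct, since $\intt{\;}$ is a homomorphism on variables, $\lambda$, $\odot$ and contraction; I apply the induction hypothesis and then the matching rule of $\lR\cap$ — an axiom, $(\to_I)$, $(Weak)$ or $(Cont)$. The one substantial case is $(Cut)$, where $t=t'k$ and $\intt{t'k}=\intc{k}(\intt{t'})$. The rule supplies $\Gamma_j\vdashr t':\tS_j$ $(j=1,\ldots,n)$ and $\Delta;\cap_i^n\tS_i\vdashr k:\tT$, with conclusion $(\Gamma_1\sqcup\ldots\sqcup\Gamma_n)\unc\Delta\vdashr t'k:\tT$. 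By the induction hypothesis (i), $\intt{t'}\in\LRvdash{\Gamma_j}{\tS_j}$ for each $j$; plugging $\intt{t'}$ into the induction hypothesis (ii) for $k$ gives $(\Gamma_1\sqcup\ldots\sqcup\Gamma_n)\unc\Delta\vdash_{\lR}\intc{k}(\intt{t'}):\tT$, exactly the required judgement.

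For (ii) the generating cases are $(Sel)$ and $(\to_L)$. If $k=\bindx t$ with $\Gamma,x:\tA\vdashr t:\tS$, write $\tA=\cap_j^n\tT_j$; by the induction hypothesis (i) and $(\to_I)$ I get $\Gamma\vdash_{\lR}\lambda x.\intt{t}:\tA\to\tS$, and since $\intc{\bindx t}(M)=(\lambda x.\intt{t})M$, one use of $(\to_E)$ with $M$ typed $\tT_j$ under $\Gamma'_j$ yields $\Gamma\unc\Gamma'\vdash_{\lR}\intc{\bindx t}(M):\tS$. If $k=t::k'$ then $\intc{t::k'}(M)=\intc{k'}(M\intt{t})$: I first type $M\intt{t}$ with each $\tT_j$ by feeding $M:\cap_i^n\tS_i\to\tT_j$ and the typings $\intt{t}:\tS_i$ (induction hypothesis (i)) into $(\to_E)$, and then apply the induction hypothesis (ii) for $k'$ to the argument $M\intt{t}$. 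The cases $(Weak_k)$ and $(Cont_k)$ copy the term-level pattern through $(Weak)$ and $(Cont)$; here the conditional weakening $\weak{\{x\}\setminus Fv(M)}{\intc{k'}(M)}$ degenerates to a genuine $\weak{x}{\intc{k'}(M)}$ exactly when $x\notin Fv(M)$, which is guaranteed by Barendregt's convention together with the disjointness side-conditions of $(Cut)$ and $(\to_L)$ (and by $Fv(t)=Fv(\intt{t})$ from the preceding lemma).

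The main obstacle is the bookkeeping of bases in the $(Cut)$, $(\to_L)$ and $(Sel)$ cases, where $(\to_E)$ distributes the argument over $n$ premises and recombines them with $\sqcup$ and $\unc$. Concretely, in $(\to_L)$ I obtain $M\intt{t}:\tT_j$ under $\Theta_j=\Gamma''_j\unc G$ with $G=\Gamma_1\sqcup\ldots\sqcup\Gamma_n$, and must verify that $\Theta_1\sqcup\ldots\sqcup\Theta_m$ collapses to $G\unc(\Gamma''_1\sqcup\ldots\sqcup\Gamma''_m)$, so that the output base $\Delta\unc(\Theta_1\sqcup\ldots\sqcup\Theta_m)$ equals $\bigl((\Gamma_1\sqcup\ldots\sqcup\Gamma_n)\unc\Delta\bigr)\unc\Gamma''$, i.e.\ the base of $k$ combined with the input base. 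This uses idempotence of $\cap$ to coalesce the repeated copies of $G$, and requires treating separately $\co\notin\mathcal{R}$ (where $\unc=\sqcup$) and $\co\in\mathcal{R}$ (where $\unc$ is disjoint union and well-formedness keeps the relevant domains apart). The argument is routine but is where the real content sits.
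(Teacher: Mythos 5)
Your proposal is correct and follows essentially the same route as the paper's proof: a simultaneous induction on the typing derivations, with the homomorphic cases ($(Ax)$, $(\to_R)$, $(Weak_t)$, $(Cont_t)$) dispatched directly, $(Sel)$ handled by $(\to_I)$ followed by $(\to_E)$, $(\to_L)$ by typing $M\intt{t}$ and invoking the induction hypothesis for $k$, $(Cut)$ by composing the two induction hypotheses, and $(Weak_k)$, $(Cont_k)$ by the corresponding $\lR\cap$ rules. Your explicit treatment of the base bookkeeping (coalescing the repeated copies of $\Gamma$ via idempotence, and reading the quantifiers in (ii) as a family $\Gamma'_j \vdash_{\lR} M:\tT_j$ rather than a single judgement with an intersection type) is in fact slightly more careful than the paper's, which asserts these identifications without comment.
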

\begin{proof} The proposition is proved by simultaneous induction on
derivations. We distinguish cases according to the last typing
rule used.
\begin{itemize}
\item Cases $(Ax_{iw})$, $(Ax_{ew})$, $(\to_R)$, $(Weak_t)$ and
$(Cont_t)$ are easy, because the system $\lR\cap$ has exactly the
same rules. \item Case $(Sel)$: the derivation ends with the rule
$$
\infer[(Sel)]{`G'; \tA \vdashr \bindx t:\tS}
                    {`G', x:\tA \vdashr t:\tS}
$$
By IH we have that $`G', x:\tA \vdash_{\lR} \intt{t}:\tS$. For any
$M \in \LR$ such that $`G'' \vdash_{\lR} M:\tA$, for some $`G''$,
we have
\begin{center}
\[ \prooftree
    \prooftree
        `G', x:\tA \vdash_{\lR} \intt{t}:\tS
        \justifies
        `G' \vdash_{\lR} \lambda x.\intt{t}:\tA \to \tS
    \using{(\to_I)}
    \endprooftree
    `G'' \vdash_{\lR} M:\tA
   \justifies
   `G' \unc `G'' \vdash_{\lR}(\lambda x.\intt{t})M:\tS
   \using{(\to_E)}
\endprooftree
\]
\end{center}
Since $(\lambda x.\intt{t})M=\intc{\bindx t}(M)$, we conclude that
$\intc{\bindx t}:\LRvdash{`G''}{\tA} \to \LRvdash{`G' \unc
`G''}{\tS}$.
\item Case $(\to_L)$: the derivation ends with the
rule
$$
\infer[(\to_L)]{ `G \unc \Delta; \cap^m_j (\cap^n_i\tS_i \to \tT_j)
\vdashr t::k:\tR} {`G_1 \vdashr t:\tS_1\;...\;`G_n \vdashr t:\tS_n & \Delta;\cap^m_j\tT_j \vdash
k:\tR}
$$
\jel{where $`G = `G_1 \sqcup ... \sqcup `G_n$}. By IH we have that $\jel{`G_i} \vdash_{\lR} \intt{t}:\tS_i$, \jel{for
$i=1...n$}. For any $M \in \LR$ such that $\jel{`G'_j} \vdash_{\lR} M:\sil{\cap_{i}^{n}}\tS_i
\to \tT_j$, $\sil{j = 1, \ldots, m}$ we have
$$
\infer[(\to_E)]{`G \unc \jel{`G'_j} \vdash_{\lR} M \intt{t}:\tT_j}
                    {\jel{`G'_j} \vdash_{\lR} M:\sil{\cap_i^{n}}\tS_i \to \tT_j \quad `G \vdash_{\lR} \intt{t}:\tS_i}
$$
From the right-hand side premise in the $(\to_L)$ rule, by IH, we
get that $\intc{k}$ is the function with the scope
$\intc{k}:\LRvdash{\jel{`G'''_j}}{\tT_j} \to \LRvdash{`G''' \unc
`G''}{\tR}$, \jel{for $`G'''=`G'''_1 \sqcup ... \sqcup `G'''_n$}. For $`G'''\equiv `G \unc `G'$ and by taking
$M\intt{t}$ as the argument of the function $\intc{k}$, we get $`G
\unc \Delta \unc `G'\vdash_{\lR} \intc{k}(M\intt{t}):\tR$. Since
$\intc{k}(M\intt{t})=\intc{t::k}(M)$, we have that $`G \unc \Delta
\unc `G'\vdash_{\lR} \intc{t::k}(M):\tR$. This holds for any
$M$ of the appropriate type, yielding\\
$\intc{t::k}:\LRvdash{`G'}{\jel{\cap^n_i\tS_i} \to \tT_j} \to \LRvdash{`G
\unc \Delta \unc `G'}{\tR}$, which is exactly what we need. \item
Case $(Cut)$: the derivation ends with the rule
$$
\infer[(Cut)]{\sil{(`G \sqcup \ldots \sqcup `G_{n}) \unc \Delta} \vdashr tk:\tS}
                    {\sil{`G_{1} \vdashr t:\tT_1 \ldots `G_{n} \vdashr t:\tT_n} & \Delta; \sil{\cap \tT_i^{n}} \vdashr k:\tS}
$$
By IH we have that $`G \vdash_{\lR} \intt{t}:\tT_i$ and
$\intc{k}:\LRvdash{`G'}{\tT_i} \to \LRvdash{`G', \Delta}{\tS}$
\jel{for all} \sil{$i =1, \ldots, n.$} Hence, for any $M \in \Lambda^{\lR}$ such that
$`G' \vdash_{\lR} M:\tT_i$, $`G' \unc \Delta \vdash_{\lR}
\intc{k}(M):\tS$ holds. By taking $M \equiv \intt{t}$ and $`G' \equiv
`G$, we get $`G, \Delta \vdash_{\lR} \intc{k}(\intt{t}):\tS$. But
$\intc{k}(\intt{t})=\intt{tk}$, so the proof is done. \item Case
$(Weak_k)$: the derivation ends with the rule
$$
\infer[(Weak_k)]{`G, x:\tA; \tB \vdashr \weak{x}{k}:\tS}
                    {`G; \tB \vdashr k:\tS}
$$
By IH we have that $\intc{k}$ is the function with the scope
$\intc{k}:\LRvdash{`G'}{\tB} \to \LRvdash{`G \unc`G'}{ \tS}$,
meaning that for each $M \in \Lambda^{\lR}$ such that $`G'
\vdash_{\lR} M:\tB$ holds $`G'\unc `G'' \vdash_{\lR}
\intc{k}(M):\tS$. Now, we can apply $(Weak)$ rule:
$$
\infer[(Weak)]{`G \unc`G', x:\tA \vdashr
\weak{x}{\intc{k}(M)}:\tS}
                    {`G \unc `G' \vdashr \intc{k}(M):\tS}
$$
Since $\weak{x}{\intc{k}(M)}= \intc{\weak{x}{k}}(M)$, this means
that $\intc{\weak{x}{k}}:\LRvdash{`G'}{\tB} \to \LRvdash{`G \unc
`G',x:\tA}{\tS}$, which is exactly what we wanted to get. \item
Case $(Cont_k)$: similar to the case $(Weak_k)$, relying on the
rule $(Cont)$ in $\lR$. 
\end{itemize}
\end{proof}

For the given encoding $\intt{\;\;}$, we show that each
$\lGR$-reduction step can be simulated by $\lR$-reduction or
identity. In order to do so, we first prove the following lemmas
by induction on the structure of the contexts. The proofs of
Lemma~\ref{lemma:pi2-for-contexts} and
Lemma~\ref{lemma:int-of-append} also use Regnier's $\sigma$
reductions, investigated in \cite{regn94}.

\begin{lem}\label{lemma:context-closure}
If $M \to_{\lR} M'$, then $\intc{k}(M) \to_{\lR} \intc{k}(M').$
\end{lem}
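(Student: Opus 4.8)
The plan is to argue by induction on the structure of the context $k$, unfolding in each case the defining equation of $\intc{\;\;}$ from Definition~\ref{def:mapping} and pushing the single step $M\to_{\lR}M'$ through the surrounding term constructor by congruence. I will use throughout that $\to_{\lR}$ is the compatible closure of its rewrite rules, hence closed under each constructor that can wrap $M$, namely $(\lambda x.-)$ in the argument, $(-)N$ in the head, $\weak{x}{-}$ and $\cont{x}{y}{z}{-}$; I will also use that reduction never enlarges the free-variable set (and preserves it exactly when $\w\in\mathcal R$), which keeps the well-formedness side-conditions satisfied after $M$ is replaced by $M'$.

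For the base case $k=\bindx t$ I would note $\intc{k}(M)=(\lambda x.\intt{t})M$ and reduce in the argument to get $(\lambda x.\intt{t})M\to_{\lR}(\lambda x.\intt{t})M'$. For $k=t::k'$ I have $\intc{k}(M)=\intc{k'}(M\intt{t})$; congruence in the head gives $M\intt{t}\to_{\lR}M'\intt{t}$, and the induction hypothesis for the smaller context $k'$ yields $\intc{k'}(M\intt{t})\to_{\lR}\intc{k'}(M'\intt{t})=\intc{k}(M')$. For $k=\cont{x}{y}{z}{k'}$ I would apply the induction hypothesis and then close under the contraction constructor, checking that the occurrence side-condition (when $\w\in\mathcal R$, namely $y,z\in Fv(-)$) survives because the free-variable set is preserved.

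The delicate case, and the one I expect to be the main obstacle, is $k=\weak{x}{k'}$, which occurs only when $\w\in\mathcal R$. Here $\intc{k}(M)=\weak{\{x\}\setminus Fv(M)}{\intc{k'}(M)}$ and $\intc{k}(M')=\weak{\{x\}\setminus Fv(M')}{\intc{k'}(M')}$, so the weakening prefix depends on $Fv(M)$ respectively $Fv(M')$. A priori these prefixes could differ, if the step $M\to_{\lR}M'$ erased the free occurrence of $x$; but reduction cannot add the missing $\weak{x}{\ }$ on the right, so such a mismatch would break the claim. This is exactly where I rely on free-variable preservation in the presence of explicit weakening (the $\lR$-analogue of Proposition~\ref{prop:pres-of-FV}): since $\w\in\mathcal R$, $Fv(M)=Fv(M')$, whence $\{x\}\setminus Fv(M)=\{x\}\setminus Fv(M')$ and the two prefixes coincide. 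If this set is empty I conclude $\intc{k}(M)=\intc{k'}(M)\to_{\lR}\intc{k'}(M')=\intc{k}(M')$ directly by the induction hypothesis; if it equals $\{x\}$ I use the induction hypothesis together with congruence under $\weak{x}{-}$ to obtain $\weak{x}{\intc{k'}(M)}\to_{\lR}\weak{x}{\intc{k'}(M')}$. In every case a single step maps to a single step, as required.
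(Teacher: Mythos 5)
Your proof is correct and follows the same route as the paper, which states only that this lemma is proved by induction on the structure of the context $k$ (with congruence of $\to_{\lR}$ doing the work in each case). Your additional care in the $\weak{x}{k'}$ case, invoking the $\lR$-analogue of Proposition~\ref{prop:pres-of-FV} to see that $\{x\}\setminus Fv(M)=\{x\}\setminus Fv(M')$ when $\w\in\mathcal{R}$, correctly fills in the one detail the paper leaves implicit.
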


\begin{lem}\label{lemma:pi2-for-contexts}
$\intc{k}((\lambda x.P)N) \to_{\lR} (\lambda x.\intc{k}(P))N.$
\end{lem}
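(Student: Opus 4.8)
The plan is to argue by induction on the structure of the context $k$, following Definition~\ref{def:mapping}, peeling off one context constructor at a time. The only ingredient beyond the $\lR$-rules of Figure~\ref{fig:red-lR} is a permutation of the head redex $(\lambda x.P)N$ past the surrounding material, which is exactly an instance of Regnier's $\sigma$-reduction~\cite{regn94}; Barendregt's convention supplies the free-variable side conditions these permutations need (in particular $x\notin Fv(\intt{t})$ for any subterm $t$ coming from $k$, and $z\notin Fv(N)$ for a selected variable $z$).

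In the base case $k=\bindz t$ (with $z\neq x$ and $z\notin Fv(N)$) we have $\intc{\bindz t}((\lambda x.P)N)=(\lambda z.\intt{t})((\lambda x.P)N)$, and the target is $(\lambda x.(\lambda z.\intt{t})P)N$. The step $(\lambda z.\intt{t})((\lambda x.P)N)\to(\lambda x.(\lambda z.\intt{t})P)N$ is a Regnier $\sigma$-reduction permuting the redex out of argument position, legitimate because $x\notin Fv(\intt{t})$. For the cons case $k=t'::k'$ we have $\intc{t'::k'}((\lambda x.P)N)=\intc{k'}\bigl(((\lambda x.P)N)\intt{t'}\bigr)$; a first $\sigma$-step gives $((\lambda x.P)N)\intt{t'}\to(\lambda x.(P\intt{t'}))N$ (using $x\notin Fv(\intt{t'})$), which I transport under $\intc{k'}$ by Lemma~\ref{lemma:context-closure}, and the induction hypothesis applied to $\intc{k'}((\lambda x.(P\intt{t'}))N)$ then yields $(\lambda x.\intc{k'}(P\intt{t'}))N=(\lambda x.\intc{t'::k'}(P))N$, as required.

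The two resource cases are handled by first running the induction hypothesis under the outermost resource operator (using that $\to_{\lR}$ is a congruence, resp.\ Lemma~\ref{lemma:context-closure}) and then commuting that operator with the head redex. For $k=\cont{w}{w_1}{w_2}{k'}$ this is clean: starting from $\cont{w}{w_1}{w_2}{(\lambda x.\intc{k'}(P))N}$ one applies $(\gamma_2)$ — whose side condition $w_1,w_2\notin Fv(N)$ holds by convention, since $w_1,w_2$ are bound in $k'$ while $N$ stems from a disjoint subterm — followed by $(\gamma_1)$, reaching $(\lambda x.\cont{w}{w_1}{w_2}{\intc{k'}(P)})N=(\lambda x.\intc{\cont{w}{w_1}{w_2}{k'}}(P))N$. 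The weakening case $k=\weak{w}{k'}$ is the delicate one: after the induction hypothesis one reaches $\weak{\{w\}\setminus Fv((\lambda x.P)N)}{(\lambda x.\intc{k'}(P))N}$, whereas the target carries the weakening inside the abstraction, $(\lambda x.\weak{\{w\}\setminus Fv(P)}{\intc{k'}(P)})N$. Here one commutes the weakening through the head redex via $(\omega_1)$ and $(\omega_2)$ and reconciles the two weakening labels using $Fv(t)=Fv(\intt{t})$ together with preservation of free variables under reduction (Proposition~\ref{prop:pres-of-FV}).

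I expect the weakening case to be the main obstacle, for two reasons. First, the $\omega$-rules orient weakening toward the top of a term, which is the opposite of the inward movement the stated target demands; making the step legitimate requires reading these commutations up to the structural equivalence $\equiv$ of Figure~\ref{fig:equiv-lR}, which is precisely the ``reduction or identity'' slack that the surrounding well-founded order is designed to absorb. Second, the weakening labels $\{w\}\setminus Fv((\lambda x.P)N)$ and $\{w\}\setminus Fv(P)$ differ syntactically, so one must verify, by a short case analysis on whether $w$ occurs free in $N$ or in $P$ and using that $\weak{w}{k'}$ is well-formed (hence $w\notin Fv(k')$), that the two labels denote the same set and that no ill-formed configuration arises. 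The base and cons cases, by contrast, are immediate once Regnier's $\sigma$-reduction is available, and the contraction case is a routine $(\gamma_2)$-then-$(\gamma_1)$ computation.
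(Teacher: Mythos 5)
Your plan follows the same route the paper indicates for this lemma: induction on the structure of $k$, with the one step that the rules of Figure~\ref{fig:red-lR} cannot supply provided by Regnier-style permutations (the paper records nothing more than this). Your selection, cons and contraction cases are correct and complete, including the appeal to Barendregt's convention for the side condition of $(\gamma_2)$ and the use of Lemma~\ref{lemma:context-closure} to transport the $\sigma$-step under $\intc{k'}$.

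The weakening case, however, contains a genuine gap, and the repair you propose does not close it. After applying the IH you must pass from $\weak{\{w\}\setminus Fv((\lambda x.P)N)}{((\lambda x.\intc{k'}(P))N)}$ to $(\lambda x.\weak{\{w\}\setminus Fv(P)}{\intc{k'}(P)})N$, that is, move a weakening \emph{under} the abstraction $\lambda x$ --- and, in the sub-case $w\in Fv(N)\setminus Fv(P)$, even produce a weakening that the left-hand side does not carry at all, since its label $\{w\}\setminus Fv((\lambda x.P)N)$ is then empty. The rules $(\omega_1)$, $(\omega_2)$ you invoke run in the opposite direction: what they actually give is $(\lambda x.\weak{w}{\intc{k'}(P)})N \to_{\omega_1} (\weak{w}{(\lambda x.\intc{k'}(P))})N \to_{\omega_2} \weak{\{w\}\setminus Fv(N)}{((\lambda x.\intc{k'}(P))N)}$, i.e.\ the \emph{target} reduces to your IH-result. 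That is the converse of what the lemma asserts, and it is useless for the simulation theorem (Theorem~\ref{prop:simulation}), where one needs $\intt{e}\to^+_{\lR}\intt{e'}$, not a common reduct. Nor can the mismatch be absorbed ``up to $\equiv$'': the equivalence of Figure~\ref{fig:equiv-lR} consists solely of permutations of weakening with weakening and of contraction with contraction, so it has no rule commuting a weakening with an abstraction or an application, and $\weak{w}{((\lambda x.Q)N)}\not\equiv(\lambda x.\weak{w}{Q})N$. The ``reduction or identity'' slack you mention belongs to Theorem~\ref{prop:simulation} (the $\gamma_6/\omega_6$ cases, tracked by $\cnorm{\;}$ and $\wnorm{\;}$), not to this lemma, whose conclusion is a bare $\to_{\lR}$-reduction. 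Closing this case requires a further idea --- for instance adding the weakening/abstraction and weakening/application permutations to the ambient relation as explicit SN-preserving equations, or reformulating the weakening clause of the statement --- and since the paper omits the details of its own proof, it gives no indication of which repair it intends.
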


\begin{lem}\label{lemma:int-of-append}
If $M \in \LR$ and $k,k' \in\KGR$, then $\intc{k'}\circ\intc{k}(M)
\to_{\lR} \intc{\app{k}{k'}}(M).$
\end{lem}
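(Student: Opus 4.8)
The plan is to prove the statement by induction on the structure of the context $k$, since the defining clauses of $\append{k}{k'}$ recurse on $k$. Throughout I will use the congruence Lemma~\ref{lemma:context-closure}, the $\pi$-like commutation Lemma~\ref{lemma:pi2-for-contexts} (whose proof is where Regnier's $\sigma$-reductions enter), and I will read $\to_{\lR}$ as $\lR$-reduction taken together with the structural equivalences of Figure~\ref{fig:equiv-lR}. The Barendregt convention is what makes the side conditions workable: the bound variable $x$ of a head weakening $\weak{x}{k_1}$ and the variables $a,b,c$ of a head contraction $\cont{a}{b}{c}{k_1}$ coming from $k$ do not occur free in $k'$, so throughout $x,a,b,c \notin Fv(k')$.

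The two structural cases are immediate. When $k=\bindx t$ we have $\intc{k'}(\intc{\bindx t}(M))=\intc{k'}((\lambda x.\intt{t})M)$, whereas $\intc{\append{(\bindx t)}{k'}}(M)=(\lambda x.\intc{k'}(\intt{t}))M$ since $\append{(\bindx t)}{k'}=\bindx(tk')$ and $\intt{tk'}=\intc{k'}(\intt{t})$; these two terms are related precisely by Lemma~\ref{lemma:pi2-for-contexts} with $P=\intt{t}$ and $N=M$. When $k=u::k_1$, both $\intc{k'}(\intc{u::k_1}(M))$ and $\intc{\append{(u::k_1)}{k'}}(M)$ unfold to the same shape with $M$ replaced by $M\intt{u}$, so the induction hypothesis applied to $k_1$ at the argument $M\intt{u}$ closes the case directly.

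For the two resource cases I isolate, for each resource operator, a commutation statement saying that $\intc{k'}$ can be driven under a head resource operator: $\intc{k'}(\weak{x}{P}) \to_{\lR} \weak{x}{\intc{k'}(P)}$ and $\intc{k'}(\cont{a}{b}{c}{P}) \to_{\lR} \cont{a}{b}{c}{\intc{k'}(P)}$, each proved by an auxiliary induction on $k'$. Granting these, the weakening case $k=\weak{x}{k_1}$ and the contraction case $k=\cont{a}{b}{c}{k_1}$ both follow uniformly: starting from $\intc{k'}(\intc{k}(M))$, one first commutes $\intc{k'}$ past the head operator produced by $\intc{k}(M)$, and then rewrites the resulting inner occurrence $\intc{k'}(\intc{k_1}(M))$ into $\intc{\append{k_1}{k'}}(M)$ by Lemma~\ref{lemma:context-closure} and the induction hypothesis on $k_1$, which reproduces exactly the right-hand side $\weak{x}{\intc{\append{k_1}{k'}}(M)}$ (resp.\ $\cont{a}{b}{c}{\intc{\append{k_1}{k'}}(M)}$).

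The weakening commutation is the routine one: as $\intc{k'}$ inserts its argument in head position, a head weakening is extracted through each layer by $(\omega_2)$/$(\omega_3)$ for the $\bindx t$ and cons layers, by $(\gamma\omega_1)$ through a contraction layer, and is reordered past an outer weakening layer using the equivalences of Figure~\ref{fig:equiv-lR}; because $x\notin Fv(k')$ the erasing side condition $\{x\}\setminus Fv(\cdot)$ never deletes it. The main obstacle is the contraction commutation, precisely because the $(\gamma)$-rules of Figure~\ref{fig:red-lR} propagate a contraction \emph{inward}, i.e.\ in the opposite direction to the one in which the operator must travel here. The delicate point is therefore to show that driving $\intc{k'}$ under the head contraction --- again by induction on $k'$, using $(\gamma_1)$ through an abstraction, $(\gamma_2)$/$(\gamma_3)$ through the application layers created by $\bindx t$ and cons, $(\gamma\omega_1)$ through a weakening layer, and the equivalences of Figure~\ref{fig:equiv-lR} to reorder stacked contractions --- meets the contraction-headed target, with the freshness of $a,b,c$ with respect to $k'$ supplying the side conditions $x_1,x_2\notin Fv(\cdot)$ of $(\gamma_2)$/$(\gamma_3)$. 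This contraction case is the crux and the only place where the interaction between the embedding $\intt{\,\cdot\,}$ and the orientation of the resource rules must be handled with care.
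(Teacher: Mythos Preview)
Your induction on $k$ and your treatment of the cases $k=\bindx t$ and $k=u::k_1$ match the paper's intended approach (induction on the structure of the context, with Lemma~\ref{lemma:pi2-for-contexts} supplying the selection case). The weakening case is also fine in substance; your auxiliary commutation $\intc{k'}(\weak{x}{P})\to_{\lR}\weak{x}{\intc{k'}(P)}$ is exactly the content of the paper's later Lemma~\ref{lemma:push-of-weak}(iii), and it goes through because the $(\omega)$-rules \emph{extract} weakenings. One minor slip: in $\weak{x}{k_1}$ the variable $x$ is \emph{free}, not bound, so Barendregt's convention is not what gives you $x\notin Fv(k')$; that freshness is forced by well-formedness of $\append{(\weak{x}{k_1})}{k'}=\weak{x}{(\append{k_1}{k'})}$.

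The genuine gap is the contraction case. The commutation you assert,
\[
\intc{k'}(\cont{a}{b}{c}{P})\ \to_{\lR}\ \cont{a}{b}{c}{\intc{k'}(P)},
\]
runs \emph{against} the orientation of every $(\gamma)$-rule you cite. For instance with $k'=\bindy t$ the left-hand side is $(\lambda y.\intt{t})(\cont{a}{b}{c}{P})$ and you would need to reach $\cont{a}{b}{c}{((\lambda y.\intt{t})P)}$; but $(\gamma_3)$ reads $\cont{a}{b}{c}{(MN)}\to M(\cont{a}{b}{c}{N})$, i.e.\ it produces your left-hand side \emph{from} your right-hand side, not the other way round. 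The same mismatch occurs for $k'=u::k_1'$: you would need $(\cont{a}{b}{c}{P})\intt{u}\to_{\lR}\cont{a}{b}{c}{(P\,\intt{u})}$, while $(\gamma_2)$ gives precisely the reverse. What your argument actually proves (and what the paper records as Lemma~\ref{lemma:prop-of-cont}(ii)) is $\cont{a}{b}{c}{\intc{k'}(P)}\to_{\lR}\intc{k'}(\cont{a}{b}{c}{P})$, which is of no help for the forward direction of Lemma~\ref{lemma:int-of-append}. So you have correctly located the crux, but the resolution you sketch is incompatible with the rule orientations; this case needs a different argument (and is in fact where the appeal to Regnier's $\sigma$-reductions beyond the plain $\lR$-rules becomes essential).
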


\begin{lem}
\rule{0in}{0in}
\begin{itemize}
\item[(i)] \label{lemma:prop-of-isub} If $x \notin Fv(k)$,  then
$(\intc{k}(M))\isub{N}{x} = \intc{k}(M\isub{N}{x}).$
\item[(ii)]\label{lemma:prop-of-cont} If $x,y \notin Fv(k)$,  then
$\cont{z}{x}{y}{(\intc{k}(M))} \to_{\lR}
\intc{k}(\cont{z}{x}{y}{M}).$
\item[(iii)]\label{lemma:push-of-weak} $\intc{k}(\weak{x}{M})
\to_{\lR} \weak{\jel{\{x\}\setminus Fv(k)}}{\intc{k}(M)}.$
\end{itemize}
\end{lem}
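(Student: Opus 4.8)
The plan is to prove each of the three statements by structural induction on the context $k \in \KGR$, exploiting the fact that $\intc{\;\;}$ is defined by recursion on the four context constructors $\bindy t$, $t::k'$, $\weak{w}{k'}$ and $\cont{w}{w_1}{w_2}{k'}$ (Definition~\ref{def:mapping}). In every case I unfold $\intc{k}(\cdot)$, fire the appropriate $\lR$-rule at the head of the resulting term, and then push the remaining obligation inside $\intc{k'}$ by combining the induction hypothesis with the context-closure Lemma~\ref{lemma:context-closure}. Throughout I use the identity $Fv(t) = Fv(\intt{t})$ (Lemma~\ref{lemma:pres-of-FV}) to turn side conditions on $t$ into side conditions on its image $\intt{t}$, and I use Barendregt's convention to keep the names bound by $k$ distinct from $x, y, z$ and from the free variables of $N$.

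For (i), the selection and cons cases are immediate because the substitution never reaches $\intt{t}$ nontrivially: from $x \notin Fv(k)$ and $Fv(t) = Fv(\intt{t})$ we get $\intt{t}\isub{N}{x} = \intt{t}$, and the contraction case commutes because $x$ differs from the bound name of $k$. For (ii) the head rule is $(\gamma_3)$ in the selection case (the contracted variables do not occur in the head abstraction), $(\gamma_2)$ in the cons case (after the induction hypothesis has pushed the contraction through $k'$), $(\gamma\omega_1)$ in the weakening case, and the equivalence $(`e_4)$ in the contraction case. For (iii) the head rule is $(\omega_3)$ in the selection case, $(\omega_2)$ in the cons case, $(\gamma\omega_1)$ in the contraction case, and the equivalence $(`e_1)$ in the weakening case. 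Wherever I quote $\to_{\lR}$, I read it as reduction modulo the equivalences of Figure~\ref{fig:equiv-lR}, which is what legitimises the two swap cases.

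The only part that is more than routine is the bookkeeping of the erasure sets $\{x\}\setminus Fv(\cdot)$ built into the weakening clause $\intc{\weak{w}{k'}}(M) = \weak{\{w\}\setminus Fv(M)}{\intc{k'}(M)}$, where one cannot match syntax directly. In (i) the weakening case splits on whether $w \in Fv(M)$, and in the critical subcase one must check $w \in Fv(M\isub{N}{x}) \iff w \in Fv(N)$; this is exactly where the standing well-formedness hypothesis $x \in Fv(M)$, forced whenever $\w \in \mathcal{R}$, is used. In (ii) and (iii) the weakening and contraction cases split on whether the propagated variable already occurs free in $k'$, and after applying $(\gamma\omega_1)$, $(`e_1)$ or $(`e_4)$ one must verify that the variables actually retained in the outermost erasure coincide with the target set $\{x\}\setminus Fv(\cdot)$. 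For (iii) with $k = \weak{w}{k'}$ this comes down, using $w \neq x$, to observing that the two nested erasures $\weak{\{w\}\setminus Fv(M)}{\weak{\{x\}\setminus Fv(k')}{\intc{k'}(M)}}$ and $\weak{\{x\}\setminus Fv(k')}{\weak{\{w\}\setminus Fv(M)}{\intc{k'}(M)}}$ carry the same set of variables and hence agree up to $(`e_1)$. Once these free-variable identities are discharged, every case closes and the three statements follow.
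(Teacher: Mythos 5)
Your proof is correct and takes essentially the same route as the paper: the paper offers no details for this lemma beyond the remark that it is proved ``by induction on the structure of the contexts'', and your case analysis is exactly that induction carried out in full, with the right rule fired in each case ($\gamma_3$/$\gamma_2$ and $\omega_3$/$\omega_2$ for selection and cons, $\gamma\omega_1$ for the mixed cases, and the commutation equivalences of Figure~\ref{fig:equiv-lR} for the two swap cases, read as reduction modulo equivalence in accordance with the paper's notion of reduction sequence). Your treatment of the erasure-set bookkeeping in $\intc{\weak{w}{k'}}(M)$, including the appeal to the standing hypothesis $x \in Fv(M)$ when $\w \in \mathcal{R}$, correctly discharges the only genuinely delicate point that the paper leaves implicit.
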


Now we can prove that the reduction rules of $\lGR$-calculi can
be simulated by the reduction rules or identities in the
corresponding $\lR$-calculi. \jel{Moreover, the equivalences of
$\lGR$-calculi are preserved in $\lR$-calculi.}

\begin{thm}[Simulation of $\lGR$-reductions by
$\lR$-reductions]\label{prop:simulation}
\rule{0in}{0in}
\begin{itemize}
\item[(i)] If a term $M\rightarrow M'$, then $\intt{M} \to_{\lR}
\intt{M'}$. \item[(ii)] If a context $k \rightarrow k'$ by
$\gamma_6$ or $\omega_6$ reduction, then $\intc{k}(M) \equiv
\intc{k'}(M)$, for any $M \in \LR$. \item[(iii)] If a context $k
\rightarrow k'$ by some other reduction, then $\intc{k}(M)
\to_{\lR} \intc{k'}(M)$, for any $M \in \LR$.
\item[(iv)] \jel{If $M \equiv M'$, then $\intt{M} \equiv_{\lR}
\intt{M'}$, and if $k \equiv k'$, then $\intc{k}(M)
\equiv_{\lR} \intc{k'}(M)$, for any $M \in \LR$.}
\end{itemize}
\end{thm}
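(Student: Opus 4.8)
The plan is to prove the four items simultaneously, by induction on the derivation of the one-step $\lGR$-reduction (resp.\ equivalence), reading $\to_{\lR}$ throughout as the reflexive-transitive closure of $\lR$-reduction so that the cases producing an identity are admissible. The induction splits into the \emph{congruence cases}, where the redex lies inside a proper subterm or subcontext, and the \emph{root cases}, one per rule of Figure~\ref{fig:red-lGR}. The congruence cases are routine: for each constructor of Definition~\ref{def:mapping} one unfolds the encoding and applies the induction hypothesis. The only subtle point is the head of a cut: if $t\to t'$ then $\intt{tk}=\intc{k}(\intt{t})$, and since $\intt{t}\to_{\lR}\intt{t'}$ by IH, Lemma~\ref{lemma:context-closure} transports the step through the context map to $\intc{k}(\intt{t})\to_{\lR}\intc{k}(\intt{t'})=\intt{t'k}$; when instead $k\to k'$ one invokes item~(iii) of the IH, and for $\lambda x.{-}$, ${-}::{-}$, $\bindx{-}$, $\weak{x}{-}$, $\cont{x}{y}{z}{-}$ ordinary compatibility of $\to_{\lR}$ with contexts suffices.

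The root cases fall into three groups. First, rules with a literal counterpart in $\lR$ — $(\gamma_0)$, $(\gamma'_0)$, $(\gamma_1)$, $(\omega_1)$, $(\gamma\omega_1)$, $(\gamma\omega_2)$ — unfold to a single matching $\lR$-step, and two rules become outright identities: $(\gamma_3)$, whose two sides both unfold to $\cont{x}{x_1}{x_2}{\intc{k}(\intt{t})}$, and $(\omega_3)$, which matches after rewriting $\{x\}\setminus Fv(\intt{t})$ to $\{x\}\setminus Fv(t)$ via $Fv(t)=Fv(\intt{t})$. The two part-(ii) rules also live here: for $(\gamma_6)$ and $(\omega_6)$ both $\intc{k}(M)$ and $\intc{k'}(M)$ unfold to the very same $\lR$-term — for $(\omega_6)$ using $Fv(M\intt{t})=Fv(M)\cup Fv(t)$ — which gives the asserted $\equiv$. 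Second, the abstract-machine rules, where one $\lGR$-step becomes several $\lR$-steps: $(\beta)$ is discharged by Lemma~\ref{lemma:pi2-for-contexts} on $\intc{k}((\lambda x.\intt{t})\intt{u})$; $(\sigma)$ by the substitution-image identity $\intt{v\isub{t}{x}}=\intt{v}\isub{\intt{t}}{x}$ followed by one $\beta$-step on $(\lambda x.\intt{v})\intt{t}$; $(\pi)$ directly by the append lemma (Lemma~\ref{lemma:int-of-append}); and $(\mu)$, whose left side unfolds to $(\lambda x.\intc{k}(x))M$, by a $\beta$-step and then $(\intc{k}(x))\isub{M}{x}=\intc{k}(M)$, the substitution-commutation property, legitimate because $x\notin Fv(k)$ in the well-formed context $\bindx xk$.

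The third group propagates contraction and weakening into cuts and conses — $(\gamma_2)$, $(\gamma_4)$, $(\gamma_5)$ and $(\omega_2)$, $(\omega_4)$, $(\omega_5)$ — by combining the commutation lemmas $\cont{z}{x}{y}{\intc{k}(M)}\to_{\lR}\intc{k}(\cont{z}{x}{y}{M})$ (for $x,y\notin Fv(k)$) and $\intc{k}(\weak{x}{M})\to_{\lR}\weak{\{x\}\setminus Fv(k)}{\intc{k}(M)}$ with the matching $\lR$ rule and Lemma~\ref{lemma:context-closure}. For example $(\gamma_5)$ applies the contraction-commutation lemma (whose hypothesis $x_1,x_2\notin Fv(k)$ is the rule's side condition) and then an $\lR$-step $(\gamma_3)$ under $\intc{k}(-)$; $(\omega_5)$ applies $(\omega_3)$ in $\lR$, then the weakening-push lemma, reconciled by the set identity $(\{x\}\setminus Fv(M))\setminus Fv(k)=(\{x\}\setminus Fv(k))\setminus Fv(M)$; and $(\gamma_4)$, $(\omega_4)$ proceed symmetrically through $(\gamma_2),(\gamma_1)$ and $(\omega_1),(\omega_2)$ of $\lR$. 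The freshness needed in $(\gamma_4)$ and $(\gamma_5)$ — that the bound variables $x_1,x_2$ of the contraction context do not occur in the plugged-in $M$ — is supplied by Barendregt's convention. Part~(iv) is then immediate, since $\intc{-}$ commutes with the weakening and contraction context formers, so each equivalence $(\varepsilon_1)$--$(\varepsilon_4)$ unfolds to its $\lR$-counterpart. The real work is concentrated in the second group: the delayed-substitution, non-classical shape of the $\lGR$-reductions is exactly what forces $\beta,\sigma,\pi,\mu$ to be simulated through the dedicated Lemmas~\ref{lemma:pi2-for-contexts} and~\ref{lemma:int-of-append} and the substitution lemmas rather than by single steps, and the only genuine difficulty is making their free-variable side conditions line up.
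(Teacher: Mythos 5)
Your proposal follows the paper's own proof almost case for case: the reduction to root (outermost) redexes, which the paper dispatches with ``without losing generality''; Lemma~\ref{lemma:pi2-for-contexts} for $(\beta)$; the identity $\intt{v\isub{t}{x}}=\intt{v}\isub{\intt{t}}{x}$ plus one $\beta$-step for $(\sigma)$; Lemma~\ref{lemma:int-of-append} for $(\pi)$; a $\beta$-step for $(\mu)$; the contraction/weakening commutation lemmas combined with the matching $\lR$-rules for the propagation rules; identities for $(\gamma_6)$ and $(\omega_6)$; and the coincidence of the equivalence relations for item (iv). Where you add material (the congruence cases via Lemma~\ref{lemma:context-closure}, the substitution identity $(\intc{k}(x))\isub{M}{x}=\intc{k}(M)$ behind $(\mu)$, the observation that $(\gamma_4)$ and $(\omega_4)$ need two $\lR$-steps where the paper cites a single rule), you are being more precise than the paper, not taking a different route.

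The one substantive divergence is your classification of the term rules $(\gamma_3)$ and $(\omega_3)$ as identities, which forces your reading of $\to_{\lR}$ in item (i) as a reflexive--transitive closure. The paper instead asserts that $(\gamma_3)$ and $(\omega_3)$ ``are simulated by the corresponding rules of $\lR$'', i.e.\ by genuine steps; this strictness is exactly what item (i) claims and what Proposition~\ref{prop:inclusion} later relies on, since a $(\gamma_3)$- or $(\omega_3)$-step is there charged to the strict order $>_{\lR}$. Your computation, however, is the correct one: $\intt{\cont{x}{x_1}{x_2}{(tk)}}=\cont{x}{x_1}{x_2}{\intc{k}(\intt{t})}=\intc{\cont{x}{x_1}{x_2}{k}}(\intt{t})=\intt{t(\cont{x}{x_1}{x_2}{k})}$, and similarly for $(\omega_3)$ using $Fv(t)=Fv(\intt{t})$; the encoding hoists resource operators on contexts to the outside, so pushing them from a cut into its context is invisible to $\intt{\;\;}$, exactly as for $(\gamma_6)$ and $(\omega_6)$. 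So on this point you have exposed an inaccuracy in the paper rather than committed one. Be aware of what your fix costs downstream, though: with item (i) weakened, the well-foundedness argument of Proposition~\ref{prop:inclusion} must charge $(\gamma_3)$ and $(\omega_3)$ to the auxiliary measures instead of to $>_{\lR}$. For $(\gamma_3)$ this works, since $\cnorm{\cdot}$ strictly decreases (by $\size{t}$); but for $(\omega_3)$ one has $\cnorm{\cdot}$ constant and, in $\lGw$ when $x\in Fv(t)$, $\wnorm{\cdot}$ can even grow, so the repair is not automatic --- and neither your write-up nor the paper carries it out.
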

\begin{proof} Without losing generality, we prove the statement only for
the outermost reductions. The rules $\jel{\gamma_0,} \sil{\gamma'_0,} \gamma_1, \gamma_2,
\gamma_3$, $\omega_1, \omega_2,
\omega_3$, $\gamma\omega_1$, $\gamma\omega_2$ are simulated by the
corresponding rules of $\lR$. For the remaining rules we have:
\begin{itemize}
\item[($\beta$)] $(\lambda x.t)(u::k) \to u(\bindx tk)$.

On the one hand $\intt{M} = \intt{(\lambda x.t)(u::k)} =
\intc{u::k}(\intt{\lambda x.t}) = \intc{k}((\lambda
x.\intt{t})\intt{u})$

On the other hand, $\intt{M'} = \intt{u(\bindx tk)} = \intc{\bindx
tk}(\intt{u}) = (\lambda x.\intt{tk})\intt{u} =\\
(\lambda x.\intc{k}(\intt{t}))\intt{u}$. So,
$\intt{M}\rightarrow_{\lR} \intt{M'}$ by
Lemma~\ref{lemma:pi2-for-contexts}.

\item[($\sigma$)] $T(\bindx{v})\to v\isub{T}{x}$.

$\intt{M}=\intt{T(\bindx{x})}=\intc{\bindx{x}}(\intt{T})=(\lambda
x.\intt{v})(\intt{T}),$

$\intt{M'}=\intt{v\isub{T}{x}}= \intt{v}\isub{\intt{T}}{x}$ by
Lemma~\ref{lemma:int-of-sub}, so $\intt{M} \to_{\lR} \intt{M'}$ by
$\beta$-reduction in the $\lR$-calculus.

\item[($\pi$)] $(tk)k' \to t(\append{k}{k'})$

$\intt{M} = \intt{(tk)k'} = \intc{k'}(\intt{tk}) =
\intc{k'}(\intc{k} (\intt{t}))$

$\intt{M'} = \intt{t(\append{k}{k'})} =
\intc{\append{k}{k'}}(\intt{t})$.

Applying Lemma~\ref{lemma:int-of-append} we get that $\intt{M}
\to_{\lR} \intt{M'}$.

\item[($\mu$)] $\bindx xk \to k$.

This reduction reduces context to context, so we have:

$\intc{\bindx xk}(M) = (\lambda x.\intt{xk})(M) = (\lambda
x.\intc{k}(x))(M)$.

This reduces to $\intc{k}(M)$ by $\beta$-reduction in the
$\lR$-calculus.

\item[($\gamma_4$)] $\cont{x}{x_1}{x_2}{(\bindy t)} \to \bindy
(\cont{x}{x_1}{x_2}{t})$

$\intc{K}(M) = \cont{x}{x_1}{x_2}{(\lambda y.\intt{t})M}.$

On the other hand,\\
$\intc{K'}(M) = (\lambda y.\cont{x}{x_1}{x_2}{\intt{t}})M.$

So $\intt{M} \to_{\lR} \intt{M'}$ by the rule $\gamma_2$.

\item[($\gamma_5$)] $\cont{x}{x_1}{x_2}{(t::k)} \to
(\cont{x}{x_1}{x_2}{t})::k, \;\;\;\mbox{if} \; x_1,x_2 \notin
Fv(k)$

$\intc{K}(M) = \cont{x}{x_1}{x_2}{(\intc{k}(M\intt{t}))}.$

$\intc{K'}(M) = \intc{k}(M(\cont{x}{x_1}{x_2}{\intt{t}}))$.

$x_1,x_2 \notin Fv(k)$ implies $x_1,x_2 \notin Fv(\intc{k}(M))$ so
we can apply Lemma~\ref{lemma:prop-of-cont} followed by reduction
$\gamma_3$ and conclude that $\intc{K}(M) \to_{\lR} \intc{K'}(M).$

\item[($\gamma_6$)] $\cont{x}{x_1}{x_2}{(t::k)} \to
t::(\cont{x}{x_1}{x_2}{k}), \;\;\;\mbox{if} \; x_1,x_2 \notin
Fv(t)$

$\intc{K}(M) = \intc{\cont{x}{x_1}{x_2}{(t::k)}}(M) =
\cont{x}{x_1}{x_2}{\intc{k}(M\intt{t})}.$

On the other hand,\\
$\intc{K'}(M) = \intc{t::(\cont{x}{x_1}{x_2}{k})}(M) =
\cont{x}{x_1}{x_2}{\intc{k}(M\intt{t})}.$

So $\intc{K}(M) = \intc{K'}(M)$.

\item[($\omega_4$)] $\bindx (\weak{y}{t}) \to \weak{y}{(\bindx
t)}, \;\;x \neq y$

$\intc{K}(M) = \intc{\bindx (\weak{y}{t})}(M) = (\lambda
x.\weak{y}{\intt{t}})M.$

$\intc{K'}(M) = \intc{\weak{y}{(\bindx t)}}(M) = \weak{y}{(\lambda
x.\intt{t})M}$.

So $\intc{K}(M) \to_{\lR} \intc{K'}(M)$ by the rule $\omega_2$.

\item[($\omega_5$)] $(\weak{x}{t})::k \to \weak{\jel{\{x\}\setminus Fv(k)}}{(t::k)}$

$\intc{K}(M) = \intc{(\weak{x}{t})::k}(M) =
\intc{k}(M\intt{\weak{x}{t}}) = \intc{k}(M\weak{x}{\intt{t}}).$

$\intc{K'}(M) = \intc{\weak{\jel{\{x\}\setminus Fv(k)}}{(t::k)}}(M) =
\weak{\jel{(\{x\}\setminus Fv(k))\setminus Fv(M)}}{\intc{t::k}(M)} = \weak{\jel{(\{x\}\setminus Fv(k))\setminus Fv(M)}}{\intc{k}(M\intt{t})}.$

Applying the rule $\omega_3$ of $\lR$ and
Lemma~\ref{lemma:push-of-weak} we get that

$\intc{K}(M) \rightarrow_{\lR} \intc{K'}(M)$.

\item[($\omega_6$)] $t::(\weak{x}{k}) \to \weak{\jel{\{x\}\setminus Fv(t)}}{(t::k)}$

$\intc{K}(M) = \intc{t::(\weak{x}{t})}(M) =
\intc{\weak{x}{k}}(M\intt{t}) = \weak{\jel{\{x\}\setminus Fv(M\intt{t})}}{\intc{k}(M\intt{t})} =
\intc{K'}(M).$

\jel{\noindent The proof of $(iv)$ is trivial, since the equivalences of $\lGR$-calculi and $\lR$-calculi coincide.}
\end{itemize}
\end{proof}

\jel{The previous proposition shows that $\beta$, $\pi$, $\sigma$, $\mu$, \jel{$\gamma_0$} -
$\gamma_5$, $\omega_1$ - $\omega_5$, $\gamma\omega_1$ and
$\gamma\omega_2$ $\lGR$-reductions are
interpreted by $\lR$-reductions and that $\gamma_6$ and $\omega_6$ $\lGR$-reductions are
interpreted by an identity in the $\lR$. Since the set of equivalences of the two bases of the resource control cube coincide, they are trivially preserved.} If one
wants to prove that there is no infinite sequence of
$\lGR$-reductions one has to prove that there cannot exist an
infinite sequence of $\lGR$-reductions which are all interpreted
as identities. To prove this, one shows that if a term is reduced
with such a $\lGR$-reduction, it is reduced for another order that
forbids infinite decreasing chains. This order is itself composed
of several orders, free of infinite decreasing chains
(Definition~\ref{def:ord}).

\begin{defi}
\label{def:x} \label{def:cnorm}\label{def:wnorm}
The functions
$\mathcal{S}^{\mathcal{R}},\;\cnorm{\;},\;\wnorm{\;}: \LGR \to 
\mathbb{N}$ are defined in Figure~\ref{fig:size-norms}.
\end{defi}
\vspace*{-0.3em}
\begin{figure}[hbtp]
\centerline{ \framebox{ $
\begin{array}{rcl|rcl|rcl}\\
\size{x} & = & 1 &  \cnorm{x} & = & 0 & \wnorm{x} & = & 1\\
\size{\lambda x.t} & = & 1 + \size{t} & \cnorm{\lambda x.t} & = & \cnorm{t} & \wnorm{\lambda x.t} & = & 1 + \wnorm{t}\\
\size{\weak{x}{e}} & = & 1 + \size{e} & \cnorm{\weak{x}{e}} & = & \cnorm{e} & \wnorm{\weak{x}{e}} & = & 0\\
\size{\cont {x}{y}{z}{e}} & = & 1 + \size{e} &
\cnorm{\cont{x}{y}{z}{e}} & = & \cnorm{e}  + \size{e} &
\wnorm{\cont {x}{y}{z}{e}} & = & 1 + \wnorm{e}\\
\size{tk} & = & \size{t} + \size{k} & \cnorm{tk} & = & \cnorm{t} + \cnorm{k} & \wnorm{tk} & = & 1 + \wnorm{t} + \wnorm{k}\\
\size{\bindx{t}} & = & 1 + \size{t} & \cnorm{\bindx{t}} & = & \cnorm{t} & \wnorm{\bindx{t}} & = & 1 + \wnorm{t}\\
\size{t::k} & = & \size{t} + \size{k} & \cnorm{t::k} & = & \cnorm{t} + \cnorm{k} & \wnorm{t::k} & = & 1 + \wnorm{t} + \wnorm{k}\\ \\
\end{array}
$ }} \caption{Definitions of $\size{e}, \cnorm{e}, \wnorm{e}$}
\label{fig:size-norms}
\end{figure}

\begin{lem}\label{lemma:cnorm}
For all $e, e' \in \lGR$:
\begin{itemize}
\item[(i)] If $e \; \to_{\gamma_6} \; e'$, then $\cnorm{e} >
\cnorm{e'}$. \item[(ii)] If $e \; \to_{\omega_6} \; e'$, then
$\cnorm{e} = \cnorm{e'}$.
\jel{\item[(iii)] If $e \; \equiv \; e'$, then
$\cnorm{e} = \cnorm{e'}$.}
\end{itemize}
\end{lem}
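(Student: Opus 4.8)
The plan is to prove all three items by induction on the position at which the rule (respectively the equivalence axiom) is applied, reducing each claim to the case of an \emph{outermost} redex and then propagating the resulting (in)equality through the surrounding constructor. This is legitimate because one-step reduction and $\equiv$ are compatible with every term- and context-forming operator, and because, apart from the contraction constructor, $\cnorm{}$ is purely additive over subexpressions (it is constant on $\lambda$, on selection and on weakening, and additive on cut $tk$ and cons $t::k$). Hence for the congruence cases $e=\lambda y.d$, $e=dk$, $e=t::d$, $e=\bindy{d}$, $e=\weak{y}{d}$, and their symmetric variants, the statement follows at once from the induction hypothesis applied to the active subexpression.

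For the base cases I would unfold the definitions of Figure~\ref{fig:size-norms}. For $(\gamma_6)$, with $e=\cont{x}{x_1}{x_2}{(t::k)}$ and $e'=t::(\cont{x}{x_1}{x_2}{k})$, one computes $\cnorm{e}=(\cnorm{t}+\cnorm{k})+(\size{t}+\size{k})$ while $\cnorm{e'}=\cnorm{t}+(\cnorm{k}+\size{k})$, so $\cnorm{e}-\cnorm{e'}=\size{t}\ge 1$, which is the strict decrease of (i). For $(\omega_6)$, with $e=t::(\weak{x}{k})$, the norm $\cnorm{e}=\cnorm{t}+\cnorm{k}$ is unchanged under both readings of the shorthand $\weak{\{x\}\setminus Fv(t)}{(t::k)}$, since $\cnorm{}$ ignores weakening; this gives (ii). For (iii), $`e_1$ and $`e_2$ are immediate, and both sides of $`e_3$ and $`e_4$ evaluate to $\cnorm{e}+2\size{e}+1$, so every equivalence axiom preserves $\cnorm{}$.

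The delicate case, and the only real obstacle, is the congruence step through a contraction, $e=\cont{x}{y}{z}{d}$ with $d\to d'$ or $d\equiv d'$: here $\cnorm{e}=\cnorm{d}+\size{d}$ couples the contraction norm to the \emph{size}, so to conclude I also need $\size{d}=\size{d'}$. I would therefore establish, as an auxiliary invariant, that whenever $\co\in\mathcal{R}$ every $(\gamma_6)$-step, every $(\omega_6)$-step and every equivalence is size-preserving. For $(\gamma_6)$ and for $`e_1$--$`e_4$ this is a direct computation at the root together with the additivity of $\size{}$. The subtle point is $(\omega_6)$: in the reading $x\in Fv(t)$ the step $t::(\weak{x}{k})\to t::k$ \emph{does} strictly decrease $\size{}$. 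This is harmless, because a contraction context can arise only when $\co\in\mathcal{R}$, and in that modality the well-formedness of the redex $t::(\weak{x}{k})$ forces $Fv(t)\cap Fv(\weak{x}{k})=\emptyset$, hence $x\notin Fv(t)$, so only the size-preserving reading can fire; in the weakening-only calculus $\lGw$, where the size-decreasing reading occurs, there are no contractions at all and the coupling never appears. With this invariant the contraction case closes: $\cnorm{e}=\cnorm{d}+\size{d}=\cnorm{d'}+\size{d'}=\cnorm{e'}$ for (ii) and (iii), and $\cnorm{e}=\cnorm{d}+\size{d}>\cnorm{d'}+\size{d'}=\cnorm{e'}$ for (i), completing the induction.
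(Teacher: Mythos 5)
Your proof is correct, but there is nothing in the paper to compare it against: the paper states this lemma bare, with no proof at all, treating it as a routine computation. Your argument supplies the missing details, and in doing so it isolates the one genuinely delicate point that a purely ``direct computation'' reading of the lemma would miss. Since $\cnorm{\cont{x}{y}{z}{e}} = \cnorm{e} + \size{e}$ couples the contraction norm to $\size{\cdot}$, the congruence step through a contraction needs size-preservation of the rewrite steps as an auxiliary invariant, and the size-decreasing reading of the $(\omega_6)$ shorthand (the case $x \in Fv(t)$, where $t::(\weak{x}{k})$ rewrites to $t::k$) would falsify item (ii) if it could fire under a contraction. Your resolution is exactly right and nowhere recorded in the paper: a contraction constructor exists only when $\co \in \mathcal{R}$, and in that modality the well-formedness side condition $[Fv(f)\cap Fv(c)=\emptyset]_{\co}$ on cons forces $x \notin Fv(t)$, so only the size-preserving reading can occur; in $\lGw$, where the size-decreasing reading is possible, there are no contractions and $\cnorm{\cdot}$ is identically zero, so nothing is coupled. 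The root computations also check out: $\cnorm{\cdot}$ drops by $\size{t} \geq 1$ under $(\gamma_6)$, is invariant under both readings of $(\omega_6)$ because it ignores weakening, and evaluates to $\cnorm{e} + 2\size{e} + 1$ on both sides of the contraction-permuting equivalences, while $\size{\cdot}$ is preserved by $(\gamma_6)$ and by all four equivalences, which is what closes the induction through a contraction in all three items.
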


\begin{lem}\label{lemma:wnorm}
For all $e, e' \in \lGR$:
\begin{itemize}
\item[(i)]  If $e \; \to_{\omega_6} \; e'$, then
$\wnorm{e} > \wnorm{e'}$.
\jel{\item[(ii)]  If $e \; \equiv \; e'$, then
$\wnorm{e} = \wnorm{e'}$.}
\end{itemize}
\end{lem}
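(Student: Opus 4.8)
The plan is to prove both items by induction on the structure of $e$ (equivalently, on the position of the redex), reducing each to a computation at the root. I would dispatch the easier item~(ii) first. Running over the four equivalences of Figure~\ref{fig:equiv-lGR}, I evaluate $\wnorm{\;}$ on each side with the clauses of Figure~\ref{fig:size-norms}: for $`e_1$ both sides are weakening-headed, so both equal $0$; for $`e_2$, $`e_3$ and $`e_4$ the two sides differ only by a permutation or re-association of contraction binders, and since $\wnorm{\cont{x}{y}{z}{e}}=1+\wnorm{e}$ charges exactly one unit per contraction independently of the names involved, the two sides again coincide. Every clause defining $\wnorm{\;}$ returns a value determined solely by the values of the immediate subexpressions, so these equalities lift to an arbitrary redex position by a routine congruence induction (under a weakening both sides simply collapse to $0$). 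This proves~(ii).

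For item~(i) the substance is the root step $t::(\weak{x}{k})\to_{\omega_6}\weak{\{x\}\setminus Fv(t)}{(t::k)}$. On the left, $\wnorm{t::(\weak{x}{k})}=1+\wnorm{t}+\wnorm{\weak{x}{k}}=1+\wnorm{t}\ge 1$, because the clause $\wnorm{\weak{x}{k}}=0$ discards the weight of the argument. In the operative case $x\notin Fv(t)$ the abbreviation yields a genuine head weakening $\weak{x}{(t::k)}$, whence $\wnorm{\weak{x}{(t::k)}}=0$ and therefore $\wnorm{e}=1+\wnorm{t}>0=\wnorm{e'}$. The decisive point is thus structural: $\omega_6$ replaces a cons-headed context, of weight at least $1$, by a weakening-headed one, of weight $0$.

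It remains to lift the root decrease through the surrounding constructor. For every constructor other than weakening, the corresponding clause has the shape $c+\sum_i \wnorm{e_i}$ with constant $c\ge 1$ and each immediate subexpression weight $\wnorm{e_i}$ occurring with coefficient one, so a strict decrease inside one subexpression induces a strict decrease of the whole by the inductive hypothesis; this settles $\lambda$, cut, cons, selection and contraction. The main obstacle is the weakening clause itself: since $\wnorm{\weak{y}{e}}=0$ ignores $e$, a redex lying beneath a weakening (and, symmetrically, the erasing reading $x\in Fv(t)$ of the abbreviation, where no head weakening is created) is not witnessed by the from-root weight alone. These are exactly the configurations that the composite well-founded order of Definition~\ref{def:ord} is designed to absorb, $\wnorm{\;}$ being invoked only after the $\lR$-simulation of Theorem~\ref{prop:simulation} and the $\cnorm{\;}$ count of Lemma~\ref{lemma:cnorm} have already been equated; modulo that placement, the root computation together with the monotone congruence cases yields item~(i).
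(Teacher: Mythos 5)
Your item (ii) is correct and complete, and for item (i) your root computation in the non-erasing case, together with the monotone lifting through the non-weakening constructors, is exactly the routine check the paper has in mind (the paper states Lemma~\ref{lemma:wnorm} with no proof at all, so on this point there is nothing further to compare against). The genuine problem is your final move. The two configurations you set aside --- a redex strictly below a weakening, and the erasing reading $x \in Fv(t)$ of the rule --- are not loose ends that the surrounding development can absorb: they are counterexamples to the statement as written. Below a weakening, $\wnorm{e} = 0 = \wnorm{e'}$, so the strict inequality of (i) fails. And the erasing case genuinely arises, because the formation rule for $t::k$ imposes $Fv(t) \cap Fv(k) = \emptyset$ only when $\co \in \mathcal{R}$; in $\lGw$ one has, for instance, the root step $x::(\weak{x}{\bindy y}) \to_{\omega_6} x::(\bindy y)$, where $\wnorm{x::(\weak{x}{\bindy y})} = 2$ while $\wnorm{x::(\bindy y)} = 4$, so $\wnorm{\;}$ strictly \emph{increases}.

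Your proposed escape --- that Definition~\ref{def:ord}, by placing $>_{\w}$ after $>_{\lR}$ and $>_{\co}$, is "designed to absorb" these configurations --- is unsound, and this is the heart of the gap. Proposition~\ref{prop:inclusion} needs, for every $\omega_6$ step at every position, equality in the first two components of $\ggr$ and a \emph{strict} decrease in the third; that is precisely what Lemma~\ref{lemma:wnorm}(i) is invoked for. But for a redex below a weakening all three components are equalities (the interpretation is unchanged by Theorem~\ref{prop:simulation}, $\cnorm{\;}$ is unchanged by Lemma~\ref{lemma:cnorm}, and $\wnorm{\;}$ is $0$ on both sides), and in the erasing case the third component increases, so $e \ggr e'$ simply fails there; deferring to the lexicographic order rescues nothing. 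A correct treatment must repair the statement rather than its placement: for example, restrict (i) to redexes not occurring under a weakening and handle such positions by a separate argument, and either exclude the erasing case (it cannot arise in $\lGcw$, where well-formedness forces $x \notin Fv(t)$) or modify the measure so that it also decreases there. As it stands, your argument establishes (ii) but not (i).
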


Now we can define the following orders based on the previously
introduced mappings and norms.
\begin{defi}\label{def:ord}
We define the following strict orders and equivalences on $\LGR\cap$:
\begin{itemize}
\item[(i)] $t >_{\lR} t'$\; iff \; $\intt{t} \rightarrow^+_{\lR} \intt{t'}$;\;\;  $t =_{\lR} t'$\; iff \; $\intt{t} \equiv \intt{t'}$; \\
          $k >_{\lR} k'$\; iff \; $\intc{k}(M) \rightarrow^+_{\lR} \intt{k'}(M)$\; for every $M \in \LR$ ;\\
          $k =_{\lR} k'$\; iff \; $\intc{k}(M) \equiv \intc{k'}(M)$\; for every $M \in \LR$;
\item[(ii)]  $e >_\mathsf{c} e'$\; iff \; $\cnorm{e} >
\cnorm{e'}$;\;\; $e =_\mathsf{c} e'$\; iff \;$\cnorm{e} =
\cnorm{e'};$ \item[(iii)] $e >_\mathsf{w} e'$\; iff \;$\wnorm{e} >
\wnorm{e'}$;\;\; $e =_\mathsf{w} e'$\; iff \; $\wnorm{e} =
\wnorm{e'};$
\end{itemize}
\end{defi}

A lexicographic product of two orders $>_1$ and $>_2$ is
usually defined as follows (\cite{baadnipk98}):\\
$\hspace*{20mm} a >_1 \times_{lex} >_2 b \; \Leftrightarrow \;a
>_1 b \;\;or\;\; (a =_1 b \;and \; a >_2 b). $

\begin{defi}
We define the relations $\ggr$ on  $\LGR$ as the lexicographic
products:
\[ \ggr \;\;\;= \;\;\;>_{\lR}\; \times_{lex} \; >_\mathsf{c} \; \times_{lex} \;  >_\mathsf{w}.\]
\end{defi}

The following proposition proves that the reduction relation on
the set of typed $\lGR$-expressions is included in the given
lexicographic product $\ggr$.
\begin{prop}\label{prop:inclusion}
For each $e \in \LGR$: if  $e \to e'$, then $e \ggr e'$.
\end{prop}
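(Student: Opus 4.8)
The plan is to reduce the statement, by a case analysis on which reduction rule fires in $e\to e'$, to the simulation result (Theorem~\ref{prop:simulation}) and the norm lemmas (Lemmas~\ref{lemma:cnorm} and~\ref{lemma:wnorm}) already established, and then to read off membership in $\ggr$ directly from the definition of the lexicographic product. Recall that $\ggr \;=\; >_{\lR} \times_{lex} >_\mathsf{c} \times_{lex} >_\mathsf{w}$, so that $e \ggr e'$ holds precisely when one of the following three situations occurs: (a) $e >_{\lR} e'$; or (b) $e =_{\lR} e'$ and $e >_\mathsf{c} e'$; or (c) $e =_{\lR} e'$, $e =_\mathsf{c} e'$ and $e >_\mathsf{w} e'$. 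I would split the reduction rules of $\lGR$ into the two groups identified right after Theorem~\ref{prop:simulation}: the rules interpreted by genuine $\lR$-reductions, and the two rules $\gamma_6$, $\omega_6$ interpreted by an identity.

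For the first group ($\beta$, $\sigma$, $\pi$, $\mu$, $\gamma_0$, $\gamma'_0$, $\gamma_1$--$\gamma_5$, $\omega_1$--$\omega_5$, $\gamma\omega_1$, $\gamma\omega_2$), Theorem~\ref{prop:simulation}(i) (when $e$ is a term) and (iii) (when $e$ is a context) yield a nonempty $\lR$-reduction, that is $\intt{e}\to^+_{\lR}\intt{e'}$ in the first case and $\intc{e}(M)\to^+_{\lR}\intc{e'}(M)$ for every $M\in\LR$ in the second. By Definition~\ref{def:ord}(i) this is exactly $e >_{\lR} e'$, so $e \ggr e'$ falls under situation (a) and no reasoning about the remaining components is needed.

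For $\gamma_6$ and $\omega_6$ the first component is an equality rather than a strict decrease: Theorem~\ref{prop:simulation}(ii) gives $\intt{e}\equiv\intt{e'}$ (resp.\ $\intc{e}(M)\equiv\intc{e'}(M)$ for all $M$), i.e.\ $e =_{\lR} e'$, so I must descend to the remaining components. For $\gamma_6$, Lemma~\ref{lemma:cnorm}(i) gives $\cnorm{e}>\cnorm{e'}$, i.e.\ $e >_\mathsf{c} e'$, placing $e \ggr e'$ in situation (b). For $\omega_6$, Lemma~\ref{lemma:cnorm}(ii) gives $\cnorm{e}=\cnorm{e'}$ (so $e =_\mathsf{c} e'$) while Lemma~\ref{lemma:wnorm}(i) gives $\wnorm{e}>\wnorm{e'}$ (so $e >_\mathsf{w} e'$), placing $e \ggr e'$ in situation (c). Since the two groups are exhaustive, $e \ggr e'$ holds in all cases.

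The proof itself is thus little more than bookkeeping over the definition of the lexicographic product; the genuine content lives in the results it invokes. The one point I would take care to check is that all of these facts hold for a redex occurring at an arbitrary position, and not only at the outermost one: the classification of a rule as ``reduction'' or ``identity'' under $\intt{\;}$ propagates through an enclosing context by the compositionality of the mappings together with Lemma~\ref{lemma:context-closure}, and the inequalities on $\cnorm{\;}$ and $\wnorm{\;}$ propagate because those norms are defined by structural recursion and are monotone in the norms (and sizes) of subexpressions. This contextual lifting is the only place where anything could go wrong, and it is exactly what the preceding lemmas were designed to supply.
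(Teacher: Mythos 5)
Your proof is correct and takes essentially the same route as the paper's: a case split on the applied rule, where the rules simulated by genuine $\lR$-reductions give $e >_{\lR} e'$ directly, $\gamma_6$ gives $e =_{\lR} e'$ together with a strict decrease of $\cnorm{\;}$, and $\omega_6$ gives $e =_{\lR} e'$, $e =_{\mathsf{c}} e'$ and a strict decrease of $\wnorm{\;}$, after which membership in the lexicographic product is immediate. Your closing remark about lifting the outermost-redex statements to redexes in arbitrary position is a detail the paper leaves implicit (its simulation theorem is proved only for outermost reductions), but attending to it does not change the structure of the argument.
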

\begin{proof}
The proof is by case analysis on the kind of reduction and the structure of $\ggr$.\\
If $e \to e'$ by $\beta$,  $\sigma$, $\pi$, $\mu$, \jel{$\gamma_0$}, \sil{$\gamma^{\;\;'}_0$,} $\gamma_1$
$\gamma_2$, $\gamma_3$, $\gamma_4$ $\gamma_5$, $\omega_1$,
$\omega_2$, $\omega_3$ $\omega_4$ or $\omega_5$, $\gamma\omega_1$, $\gamma\omega_2$, reduction, then $e
>_{\lR} e'$
by Proposition~\ref{prop:simulation}.\\
If $e \to e'$ by $\gamma_6$, then $e =_{\lR} e'$ by
Proposition~\ref{prop:simulation}, and $e >_\mathsf{c} e'$ by Lemma~\ref{lemma:cnorm}.\\
Finally, if $e \to e'$ by $\omega_6$, then $e =_{\lR} e'$ by
Proposition~\ref{prop:simulation}, $e =_\mathsf{c} e'$ by
Lemma~\ref{lemma:cnorm} and $e >_\mathsf{w} e'$ by
Lemma~\ref{lemma:wnorm}. 
\end{proof}

SN of $"->"$ is another terminology for the well-foundedness of the
relation $"->"$ and it is well-known that a relation included in a
well-founded relation is well-founded and that the lexicographic
product of well-founded relations is well-founded.

\begin{thm}[Strong normalization]\label{th:typ=>SN-Gtz}
Each expression in $\LGR \cap$ is SN.
\end{thm}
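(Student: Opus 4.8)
The plan is to obtain strong normalisation as an immediate consequence of the well-foundedness of the order $\ggr$, following the principle recalled just before the statement: a relation contained in a well-founded relation is itself well-founded, and a lexicographic product of well-founded relations is well-founded. Proposition~\ref{prop:inclusion} already shows that, on typeable $\lGR$-expressions, one-step reduction is contained in $\ggr \;=\; >_{\lR} \times_{lex} >_\mathsf{c} \times_{lex} >_\mathsf{w}$. Hence it suffices to prove that $\ggr$ is well-founded, and strong normalisation of each typeable expression will follow because its reduction sequences embed into a well-founded order.

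First I would dispose of the two numerical components. By Definition~\ref{def:ord} both $>_\mathsf{c}$ and $>_\mathsf{w}$ are pullbacks of the standard well-founded order $(\mathbb{N},>)$ along the maps $\cnorm{\cdot}$ and $\wnorm{\cdot}$ of Definition~\ref{def:cnorm}, so they are well-founded with no further work. The substantial component is $>_{\lR}$, where $t >_{\lR} t'$ means $\intt{t} \rightarrow^+_{\lR} \intt{t'}$. For this I would combine Proposition~\ref{prop:soundness}(i), which sends a $\lGR$-expression of type $\tS$ to an $\lR$-term of type $\tS$, with Theorem~\ref{th:typ=>SN}, which states that every typeable $\lR$-term is strongly normalising. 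Thus for a typeable $e$ the image $\intt{e}$ is $\lR$-strongly normalising, so an infinite $>_{\lR}$-chain issuing from $e$ would unfold into an infinite $\lR$-reduction of $\intt{e}$ --- a contradiction. Hence $>_{\lR}$ is well-founded on the typeable fragment.

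With all three components well-founded, the well-foundedness of the lexicographic product $\ggr$ is the standard order-theoretic fact, and Proposition~\ref{prop:inclusion} finishes the argument. I would also record that the structural equivalences cause no trouble: by Theorem~\ref{prop:simulation}(iv), Lemma~\ref{lemma:cnorm}(iii) and Lemma~\ref{lemma:wnorm}(ii) they leave $\intt{\cdot}$, $\cnorm{\cdot}$ and $\wnorm{\cdot}$ unchanged, so an $\equiv$-step keeps the two expressions equal under $=_{\lR}$, $=_\mathsf{c}$ and $=_\mathsf{w}$ and never contributes a strict descent; the comparison is therefore effectively taken modulo $\equiv$.

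The only genuinely load-bearing subtlety --- and the point I would be most careful about --- is that $>_{\lR}$ is well-founded only on typeable expressions, so the whole argument is confined to the typeable fragment. This is harmless, because typeability is the hypothesis and is preserved along every reduction by subject reduction (Proposition~\ref{prop:sr-sequent}); hence every expression reachable from a typeable one is again typeable and the inclusion $\to\;\subseteq\;\ggr$ applies at each step. Since the simulation theorem and the two measure lemmas are already established, no new hard step remains: the proof is the assembly of these results into the lexicographic well-foundedness argument.
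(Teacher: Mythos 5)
Your proof is correct and follows essentially the same route as the paper's own proof: inclusion of reduction in the lexicographic product $\ggr$ (Proposition~\ref{prop:inclusion}), well-foundedness of $>_{\lR}$ obtained from type preservation under $\intt{\cdot}$ (Proposition~\ref{prop:soundness}) combined with strong normalisation of $\lR\cap$ (Theorem~\ref{th:typ=>SN}), and well-foundedness of $>_\mathsf{c}$ and $>_\mathsf{w}$ as pullbacks of $(\mathbb{N},>)$. Your explicit appeals to subject reduction (Proposition~\ref{prop:sr-sequent}) to keep every reduct inside the typeable fragment, and to the equivalence-preservation facts to handle $\equiv$-steps, merely spell out details the paper's proof leaves implicit.
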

\begin{proof}
  The reduction $"->"$ is well-founded on $\LGR\cap$  as it is
  included (Proposition~\ref{prop:inclusion}) in the relation $\ggr$ which is well-founded as the lexicographic
  product of the well-founded relations $>_{\lR}$, $>_\mathsf{c}$ and
  $>_\mathsf{w}$. The relation $>_{\lR}$ is based on the interpretation
  $\intt{~}:\LGR "->" \LR$.  By Proposition~\ref{prop:soundness} typeability is
preserved by the interpretation $\intt{~}$ and $"->"_{\lR}$ is SN
(i.e., well-founded) on $\LR\cap$
(Section~\ref{sec:reducibility}), hence $>_{\lR}$ is well-founded
on $\LGR\cap$.
  Similarly, $>_\mathsf{c}$ and $>_\mathsf{w}$ are well-founded, as they are based on
  interpretations into the well-founded relation $>$ on the set~$\mathbb{N}$ of natural
  numbers.

\end{proof}


\section{SN $\Rightarrow$ Typeability in all systems of the resource control cube}
\label{sec:SNtypeBoth}



\silv{Let us turn our attention to the most unique property of intersection types systems that all strongly normalising terms are typeable by intersection types. We will prove this property first for $\lR$-terms and then for $\lGR$-terms.}

\subsection{SN $\Rightarrow$ Typeability in $\lR \cap$}
\label{sec:SNtype}

We want to prove that if a $\lR$-term is SN, then it is typeable
in the system $\lR \cap$. We proceed in two steps:
\begin{enumerate}
\item we show that all $\lR$-normal forms are typeable and
\item we prove the head subject expansion property.
\end{enumerate}
First, let us observe the structure of
the $\lR$-normal forms, given by the following abstract syntax:
$$
\begin{array}{rcl}
 M_{nf} & ::= & x\,|\,\lambda
x.M_{nf}\,|\,xM^1_{nf}...M^n_{nf}\,|\,\lambda x.
\weak{x}{M_{nf}}\SKIP{,\;\mbox{if}\;\;x \notin Fv(M_{nf})}\\
& & |\,\cont{x}{x_1}{x_2}M_{nf}N_{nf},\;\;\mbox{if}\;\;x_1
\in Fv(M_{nf}),\;x_2 \in Fv(N_{nf})\\
W_{nf} & ::= & \weak{x}{M_{nf}}\SKIP{,\;\mbox{if}\;\;x \notin Fv(M_{nf})}\,|\,\weak{x}{W_{nf}}\SKIP{,\;\mbox{if}\;\;x \notin Fv(W_{nf})}\\
\end{array}
$$

\jel{Notice that it is necessary to distinguish normal forms $W_{nf}$ since the term $\lambda x.
\weak{y}{M_{nf}}$ is not a normal form, i.e. $\lambda x.
\weak{y}{M_{nf}} \to_{\omega_1} \weak{y}{\lambda x.M_{nf}}$.}


\begin{prop}\label{prop:nf-are-typed}
$\lR$-normal forms are typeable in the system $\lR \cap$.
\end{prop}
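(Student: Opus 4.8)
The plan is to prove the statement by structural induction on the grammar of $\lR$-normal forms, handling $M_{nf}$ and $W_{nf}$ simultaneously; for each syntactic form I exhibit a basis $\Gamma$ and a strict type $\tS$ with $\Gamma \vdashr N:\tS$, building the derivation bottom-up from the syntax-directed rules of Figure~\ref{fig:inttyp-lR}. The guiding observation, exactly as in the classical argument for the pure $\lambda$-calculus, is that every application occurring in a normal form has a \emph{variable} in head position, so that the head may be assigned whatever arrow type is dictated by the arguments, which are already typeable by the induction hypothesis.

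The routine cases go as follows. A variable $x$ is typed by $(Ax_{iw})$ or $(Ax_{ew})$ with an arbitrary strict type. An abstraction $\lambda x.M_{nf}$ is typed from the induction hypothesis on $M_{nf}$ by $(\to_I)$: when $\w \in \mathcal{R}$ the well-formedness side condition $x \in Fv(M_{nf})$ guarantees $x \in Dom(\Gamma)$, so $\Gamma = \Gamma', x:\tA$ and the rule discharges it, while when $\w \notin \mathcal{R}$ an unused $x:\tA$ may be adjoined freely to the basis (implicit weakening, available through $(Ax_{iw})$). The weakening forms $\weak{x}{M_{nf}}$ and $\weak{x}{W_{nf}}$, which occur only when $\w \in \mathcal{R}$, are typed by one application of $(Weak)$ to the induction hypothesis; here $x \notin Dom(\Gamma)$ follows from $x \notin Fv(\cdot)$ together with the invariant $Dom(\Gamma) = Fv(\cdot)$. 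The form $\lambda x.\weak{x}{M_{nf}}$ with $x \notin Fv(M_{nf})$ is typed by first typing $M_{nf}$, then adjoining $x:\tau$ for an arbitrary strict $\tau$ via $(Weak)$, and finally discharging it by $(\to_I)$ to obtain $\tau \to \tS$.

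The head-application case $xM^1_{nf}\ldots M^n_{nf}$ is the crux. By the induction hypothesis each $M^i_{nf}$ is typeable, say $\Delta_i \vdashr M^i_{nf}:\tS_i$ with $\tS_i$ strict; I assign $x$ the type $\tS_1 \to \cdots \to \tS_n \to p$ for a fresh atom $p$ and build the derivation by $n$ successive uses of $(\to_E)$ (each with a single conjunct), reading $xM^1\ldots M^n$ as the left-associated $((xM^1)\cdots)M^n$. The bases are combined by $\unc$ and $\sqcup$ as prescribed: when $\co \in \mathcal{R}$, well-formedness forces the free-variable sets of $x,M^1,\ldots,M^n$ to be pairwise disjoint, so the unions are disjoint and no clash occurs; when $\co \notin \mathcal{R}$, a variable shared between the head and several arguments simply receives the intersection of its types, which is precisely what $\sqcup$ yields. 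The stuck-contraction case $\cont{x}{x_1}{x_2}{(M_{nf}N_{nf})}$ with $x_1 \in Fv(M_{nf})$, $x_2 \in Fv(N_{nf})$ then reduces to the previous one: the body $M_{nf}N_{nf}$ is itself a head-application normal form, hence typeable by the induction hypothesis, say $\Gamma', x_1:\tA, x_2:\tB \vdashr M_{nf}N_{nf}:\tS$ with $x_1,x_2$ present in the basis because they are free; one application of $(Cont)$ gives $\Gamma', x:\tA \cap \tB \vdashr \cont{x}{x_1}{x_2}{(M_{nf}N_{nf})}:\tS$.

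The main obstacle I anticipate is not any single case but the uniform bookkeeping of bases across the four instantiations of $\mathcal{R}$. I must verify that the two union operators ($\sqcup$ versus disjoint union, selected by $\co \in \mathcal{R}$) always produce legal bases in the application case, and that the invariant $Dom(\Gamma) = Fv(N)$ demanded in the $\w \in \mathcal{R}$ modality (the analogue for $\lR$ of Proposition~\ref{prop:bases-weak}) is preserved at every step, so that each basis extension and each use of the explicit axiom $(Ax_{ew})$ is well-formed. Once this invariant is tracked through the induction, all remaining verifications are direct applications of the syntax-directed rules.
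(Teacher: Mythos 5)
Your overall strategy---structural induction over the grammar of $\lR$-normal forms, assigning the head variable of an application whatever arrow type the already-typed arguments dictate---is the same induction the paper invokes, and most of your cases (variables, abstractions, weakenings, head applications, and the base-bookkeeping across the four choices of $\mathcal{R}$) are fine. The genuine gap is in the stuck-contraction case, where you assert that the body $M_{nf}N_{nf}$ of $\cont{x}{x_1}{x_2}{(M_{nf}N_{nf})}$ ``is itself a head-application normal form''. This is false when $\co \in \mathcal{R}$: the function position of a normal application need not be a variable; it can be another stuck contraction, since no reduction rule of Figure~\ref{fig:red-lR} fires on an application whose head is a contraction. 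For instance, in $\lc$ the term
\[
\cont{x}{x_1}{x_2}{\bigl(\,(\cont{y}{y_1}{y_2}{(y_1(y_2x_1))})\,x_2\bigr)}
\]
is well formed, is a normal form (both contractions are blocked because each has one bound variable free on each side of the application beneath it, and the application of a contraction to a variable is not a redex), and is generated by the paper's grammar with $M_{nf}=\cont{y}{y_1}{y_2}{(y_1(y_2x_1))}$, which is not of the shape $zM^1_{nf}\ldots M^n_{nf}$.

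This breaks the argument and not merely its phrasing: your induction hypothesis yields only that each subterm is typeable with \emph{some} type, and that suffices for applications precisely because a \emph{variable} head can be given any type by the axiom. When the head is a compound neutral term, $(\to_E)$ demands that its type be an arrow $\cap_i^n\tT_i \to \tS$ whose domain matches the types derivable for the argument, and mere typeability of the head gives no control over that. The standard repair is to strengthen what is proved: show by mutual induction that every normal form is typeable \emph{and} that every neutral normal form (a variable, an application of a neutral form to a normal form, or a stuck contraction of such an application) can be assigned \emph{any prescribed strict type} under a suitable basis; the prescribed type is threaded through the contraction spine down to the head variable, where the axiom realises it, and $(Cont)$ preserves it on the way back up because contraction changes only the left-hand side of sequents---which is exactly the remark the paper's one-line proof makes. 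Note that the paper itself glosses over this point, and its grammar even omits normal forms such as $(\cont{y}{y_1}{y_2}{(y_1y_2)})z$ where a contraction heads an application; your attempt, being more explicit, founders exactly on the case the paper leaves implicit, and the issue is confined to the calculi with $\co \in \mathcal{R}$.
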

\begin{proof} By an easy induction on the structure of $M_{nf}$ and
$W_{nf}$. Notice that the typing rules for introducing the
explicit resource control operators change only the left-hand side
of the sequents while the types of the expressions on the
right-hand side stay unchanged.
\end{proof}

\begin{prop}[Inverse substitution lemma]\label{prop:inv-subst-lemma}
Let $\;\Gamma \vdashr M\isub{N}{x}:\tS\;$ and $N$ typeable. Then,
there are $\Gamma'$, \jel{$\Delta = \Delta_1 \sqcup...\sqcup
\Delta_n$ and $\tT_i, \;i=1,...,n$} such that $\Gamma = \Gamma'
\unc \Delta$, $\;\jel{\Delta_i}\vdashr N:\tT_i\;$ \jel{for all}
$\sil{i=1, \ldots, n}$ and $\Gamma', x:\jel{\cap^n_i}\tT_i \vdashr M:\tS$.
\end{prop}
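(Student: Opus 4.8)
The plan is to prove the statement by structural induction on the term $M$, following the clauses of the definition of implicit substitution in Figure~\ref{fig:sub-lR}, and inverting the typing of $M\isub{N}{x}$ at each step by means of the Generation Lemma (Proposition~\ref{prop:intGL}). Conceptually the statement is the converse of the Substitution Lemma (Proposition~\ref{prop:sub-lemma-nd}): the intersection $\cap_i^n \tT_i$ that ends up assigned to $x$ records the $n$ renamed copies that $N$ acquires during the substitution, and the splitting $\Gamma = \Gamma' \unc \Delta$ with $\Delta = \Delta_1 \sqcup \cdots \sqcup \Delta_n$ recovers the separate typings $\Delta_i \vdashr N:\tT_i$. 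Throughout, the base bookkeeping is governed by the operator $\unc$, which is plain union $\sqcup$ when $\co\notin\mathcal{R}$ and disjoint union when $\co\in\mathcal{R}$, so the four calculi are handled uniformly.

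First I would dispatch the base and congruence cases. For $M\equiv x$ we have $M\isub{N}{x}=N$, hence $\Gamma\vdashr N:\tS$; taking $n=1$, $\tT_1=\tS$, $\Delta_1=\Gamma$ and $\Gamma'$ empty closes the case via the axiom $x:\tS\vdashr x:\tS$. For $M\equiv\lambda y.M_1$ and $M\equiv M_1P$ I would apply clauses (iii) and (iv) of the Generation Lemma to expose the typings of the immediate subterms, invoke the induction hypothesis on each subterm, and then merge the resulting $N$-typings: in the application case the two families of bases obtained for $M_1$ and for $P$ are combined with $\sqcup$, and the corresponding lists of types for $N$ are concatenated into a single intersection for $x$. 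The cases $M\equiv\cont{y}{y_1}{y_2}{M_1}$ and $M\equiv\weak{y}{M_1}$ with $y\neq x$ are immediate from clauses (vi) and (v): the outer resource operator is preserved by the substitution, so I strip it off, apply the induction hypothesis to $M_1$, and re-apply the rule.

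The genuine work lies in the cases where $x$ is itself the active variable. If $x\notin Fv(M)$ (in particular $M\equiv y$ with $y\neq x$; possible only when $\w\notin\mathcal{R}$) the substitution is vacuous, and if $M\equiv\weak{x}{M_1}$ (so $\w\in\mathcal{R}$) then $M\isub{N}{x}=\weak{Fv(N)\setminus Fv(M_1)}{M_1}$; in both situations the type of $N$ has vanished from the typing of $M\isub{N}{x}$, which is exactly why the hypothesis that $N$ is typeable is required. Here I would choose a witness derivation $\Delta\vdashr N:\tT$, set $n=1$ and $\tT_1=\tT$, and decompose $\Gamma=\Gamma'\unc\Delta$; the delicate point is matching this $\Delta$ against the given $\Gamma$, whose erased variables carry arbitrary types introduced by the $(Weak)$ rule, and it is the freedom of that rule together with implicit weakening that makes the decomposition consistent. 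The central case is $M\equiv\cont{x}{x_1}{x_2}{M_1}$ (so $\co\in\mathcal{R}$), where $M\isub{N}{x}=\cont{Fv[N]}{Fv[N_1]}{Fv[N_2]}{M_1[N_1/x_1,N_2/x_2]}$. I would first peel off, by repeated use of Generation Lemma clause (vi), the chain of contractions ranging over $Fv(N)$, which splits each $z_j:\tA_j\cap\tB_j$ of $\Gamma$ into $Fv(N_1)$-assumptions and $Fv(N_2)$-assumptions and leaves a typing of the parallel substitution $M_1[N_1/x_1,N_2/x_2]$. Since $N_1$ and $N_2$ share no free variables and $x_1\neq x_2$, this parallel substitution equals the sequential one $M_1\isub{N_1}{x_1}\isub{N_2}{x_2}$, so the induction hypothesis applies twice, producing types $\tR_l$ for the copy $N_1$ and $\tU_k$ for the copy $N_2$. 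Renaming these copies back to $N$ and merging their bases with $\sqcup$ reconstructs precisely the assumptions $z_j:\tA_j\cap\tB_j$ of $\Gamma$, while the union of the two lists of types becomes the intersection assigned to $x$.

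The main obstacle is exactly this contraction case: keeping the base arithmetic straight while passing from the parallel substitution to two sequential ones and re-bundling the $N$-typings together with their bases under $\unc$ and $\sqcup$. To make the double induction go through cleanly I expect to need an auxiliary, strengthened statement for the parallel substitution $M_1[N_1/x_1,N_2/x_2]$ of two copies with disjoint free variables, proved by simultaneous induction with the main claim, in the same spirit as the auxiliary statement used earlier to show that substitution is well defined. The remaining verifications — that $\Gamma'\unc\Delta$ indeed reproduces $\Gamma$ and that the reconstructed derivations are well formed in each of the four calculi — are routine consequences of the syntax-directedness of $\lR\cap$ and are discharged uniformly through the $\unc$ notation.
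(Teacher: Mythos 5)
Your proposal is correct and follows essentially the same route as the paper's own proof: structural induction on $M$ with Generation Lemma inversions, using the typeability hypothesis on $N$ exactly in the vacuous cases ($M\equiv y$ with $\w\notin\mathcal{R}$, and $M\equiv\weak{x}{M_1}$), and handling $M\equiv\cont{x}{x_1}{x_2}{M_1}$ by peeling the chain of contractions over $Fv(N)$, applying the induction hypothesis twice to the renamed copies $N_1$, $N_2$, and merging the resulting bases and types back into $\cap_i^n\tT_i$. The only difference is that where the paper simply says ``applying IH two times'' to the parallel substitution, you make explicit the auxiliary strengthened statement for $M_1[N_1/x_1,N_2/x_2]$ needed to justify that step --- a sound refinement of the same argument rather than a different one.
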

\begin{proof} By induction on the structure of $M$. We will just show the
basic case and the cases related to resource operators.
\begin{itemize}
\item Basic case:\\
- $\;M\equiv x$. Then $M\isub{N}{x}=x\isub{N}{x}=N$. For $\Gamma'
\equiv \emptyset$, $\Delta \equiv \Gamma$ and $\tT_i \equiv \tS$
we get $\;\Delta \vdashr N:\tS$ from the premise and
$x:\jel{\cap^n_i\tS_i}\vdashr x:\tS$ from the axiom.\\
- $\;M\equiv y$. Then $M\isub{N}{x}=y\isub{N}{x}=y$. This case is
possible only if $\w \notin \mathcal{R}$, so we can assume that
$\Gamma = \Gamma \sqcup \Delta$, where $\Delta \vdashr N:\tT$,
from the premise that $N$ is typeable, and $\Gamma \vdashr y:\tS$
from the implicit weakening axiom.
\item Case $\;M\equiv
\weak{x}{M'}$. Then $M\isub{N}{x}=(\weak{x}{M'})\isub{N}{x}=
\weak{Fv(N)\jel{\setminus Fv(M')}}{M'}$. From the premise $\Gamma \vdashr
(\weak{x}{M'})\isub{N}{x}: \tS$ we have $\Gamma \vdashr
\weak{Fv(N)\jel{\setminus Fv(M')}}{M'}: \tS$, hence by the generation lemma $\Gamma'
\vdashr {M'}: \tS$, for $\Gamma' = \Gamma \setminus (Fv(N)\jel{\setminus Fv(M')})$. Now,
for an arbitrary type $\tA$, we have  $\Gamma', x:\tA \vdashr
\weak{x}{M'}: \tS$. From the premise that $N$ is typeable, knowing
that $\w \in \mathcal{R}$, we get $\Delta \vdashr N:\tB$ where
$Dom(\Delta) = Fv(N)$. The proposition is proved by taking $\tA
\equiv \tB$.
\item Case $\;M\equiv \weak{y}{M'}$. Then
$M\isub{N}{x}=(\weak{y}{M'})\isub{N}{x}= \weak{\jel{\{y\}\setminus Fv(N)}}{M'}\isub{N}{x}$.
From $\Gamma \vdashr \weak{\jel{\{y\}\setminus Fv(N)}}{M'}\isub{N}{x}: \tS$ by generation
lemma, we have that $\Gamma' = \Gamma \jel{\setminus (\{y\}\setminus Fv(N))}, y:\tA$ and $\Gamma' \vdashr
M'\isub{N}{x}: \tS$. Now, by IH we get that $\Gamma' = \Gamma''
\unc \Delta$, \jel{$\Delta = \Delta_1 \sqcup ... \sqcup \Delta_n$}, $\;\jel{\Delta_i} \vdashr N:\tT_i$ \jel{for all} $\sil{i=1, \ldots, n}$ and
$\Gamma'', x:\jel{\cap^n_i}\tT_i \vdashr M':\tS$. Since $y
\notin M'$, we get $\Gamma'', y:\tA, x:\jel{\cap^n_i}\tT_i \vdashr
\weak{y}{M'}:\tS$. \item Case $\;M\equiv \cont{y}{y_1}{y_2}{M'}$
and $x \neq y$. Then
$M\isub{N}{x}=(\cont{y}{y_1}{y_2}{M'})\isub{N}{x}=
\cont{y}{y_1}{y_2}{M'}\isub{N}{x}$.  From the premise $\Gamma
\vdashr \cont{y}{y_1}{y_2}{M'\isub{N}{x}}:\tS$ using the
generation lemma we get that $\Gamma = \Gamma', y:\tA \cap \tB$
and $\Gamma', y_1:\tA,y_2:\tB \vdashr M'\isub{N}{x}:\tS$.  By IH
we get that $\Gamma' = \Gamma'' \unc \Delta$, \jel{$\Delta = \Delta_1 \sqcup ... \sqcup \Delta_n$}, $\;\jel{\Delta_i} \vdashr
N:\tT_i$ \jel{for all} $\sil{i=1, \ldots, n}$ and $\Gamma'', y_1:\tA,y_2:\tB,
x:\jel{\cap^n_i}\tT_i \vdashr M':\tS$. Using $(Cont)$ rule we
get $\Gamma'', y:\tA \cap \tB, x:\jel{\cap^n_i}\tT_i \vdashr
\cont{y}{y_1}{y_2}{M'}:\tS$.
\item Case $\;M\equiv \cont{x}{x_1}{x_2}{M'}$. Then
$M\isub{N}{x}=(\cont{x}{x_1}{x_2}{M'})\isub{N}{x}=
\cont{Fv(N)}{Fv(N_1)}{Fv(N_2)}{\jel{M'[N_1/x_1,N_2/x_2]}}$.
From the premise $\Gamma \vdashr
\cont{Fv(N)}{Fv(N_1)}{Fv(N_2)}{\jel{M'[N_1/x_1,N_2/x_2]}}:\tS$,
using the generation lemma we get that for $Fv(N) =
\{y_1,...,y_n\}$ holds $\Gamma = \Gamma', y_1:\tA_1 \cap
\tB_1,...,y_n:\tA_n \cap \tB_n $ and $\Gamma',
y'_1:\tA_1,y''_1:\tB_1,...,y'_n:\tA_n,y''_n:\tB_n  \vdashr
\jel{M[N_1/x_1,N_2/x_2]}:\tS$. Applying IH two times, we
obtain $\Gamma' = \Gamma'' \unc \Delta' \unc \Delta''$, \jel{where $\Delta' = \Delta'_1 \sqcup ... \sqcup \Delta'_n$ and
$\Delta'' = \Delta''_1 \sqcup ... \sqcup \Delta''_n$}, $\;\Delta'_i
\vdashr N_1:\tT'_i$ \jel{for all} $\sil{i=1, \ldots, n}$,  $\;\Delta''_i \vdashr
N_2:\tT''_i$ \jel{for all} $\sil{i=1, \ldots, n}$, and $\Gamma'',
y'_1:\tA_1,y''_1:\tB_1,...,y'_n:\tA_n,y''_n:\tB_n,
x_1:\jel{\cap^n_i}\tT'_i, x_2:\jel{\cap^n_i}\tT''_i \vdashr
M':\tS$. Since $N_1$ and $N_2$ are obtained by renaming $N$ we
have that $\jel{\cap^n_i}\tT'_i \equiv \jel{\cap^n_i}\tT''_i \equiv \jel{\cap^n_i}\tT_i$,
$\;\Delta_i \vdashr N:\tT_i$ \jel{for all} $\sil{i=1, \ldots, n}$ and for $\jel{\Delta_i =
\Delta'_i \unc \Delta''_i}$. Finally, by $(Cont)$ rule we get $\Gamma'',
y'_1:\tA_1,y''_1:\tB_1,...,y'_n:\tA_n,y''_n:\tB_n,
x:\jel{\cap^n_i}\tT_i \vdashr \cont{x}{x_1}{x_2}{M'}:\tS$ and the
proof is done. 
\end{itemize}
\end{proof}

\begin{prop}[Head subject expansion]
\label{prop:sub-exp} For every $\lR$-term $M$: if $M \to
M'$, $M$ is contracted redex and $\;\Gamma \vdashr M':\tS\;$, then
$\;\Gamma \vdashr M:\tS$, provided that if $M \equiv (\lambda
x.N)P \to_{\beta} N\isub{P}{x} \equiv M'$, $\;P$ is typeable.
\end{prop}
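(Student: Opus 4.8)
The plan is to argue by case analysis on the reduction rule applied, which is legitimate precisely because $M$ is a contracted redex: the reduction happens at the outermost position, so the step is an instance of exactly one of the rules $\beta, \gamma_0, \gamma^{\;\;'}_0, \gamma_1, \gamma_2, \gamma_3, \omega_1, \omega_2, \omega_3, \gamma\omega_1, \gamma\omega_2$ that are active in the calculus $\lR$ under consideration. For each rule I would assume $\Gamma \vdashr M':\tS$, dissect this derivation with the Generation Lemma for $\lR\cap$ (Proposition~\ref{prop:intGL}), and then reassemble a derivation of $\Gamma \vdashr M:\tS$ using the type rules of Figure~\ref{fig:inttyp-lR}. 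In effect, each case runs the Subject Reduction argument backwards.

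The central case is $(\beta)$, and it is the only one requiring the proviso. Here $M \equiv (\lambda x.N)P$ and $M' \equiv N\isub{P}{x}$. Since $P$ is assumed typeable and $\Gamma \vdashr N\isub{P}{x}:\tS$, the Inverse Substitution Lemma (Proposition~\ref{prop:inv-subst-lemma}) yields a splitting $\Gamma = \Gamma' \unc \Delta$ with $\Delta = \Delta_1 \sqcup \cdots \sqcup \Delta_n$, strict types $\tT_i$ with $\Delta_i \vdashr P:\tT_i$ for all $i$, and $\Gamma', x:\cap_{i}^{n}\tT_i \vdashr N:\tS$. Applying $(\to_I)$ gives $\Gamma' \vdashr \lambda x.N : \cap_{i}^{n}\tT_i \to \tS$, and then $(\to_E)$ with the premises $\Delta_i \vdashr P:\tT_i$ rebuilds $\Gamma' \unc (\Delta_1 \sqcup \cdots \sqcup \Delta_n) \vdashr (\lambda x.N)P:\tS$, i.e.\ $\Gamma \vdashr M:\tS$.

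The remaining cases are the purely structural commutations of the resource operators, and none of them creates a substitution except $\gamma\omega_2$, so they need no typeability hypothesis. For a rule such as $(\gamma_2)$, $\cont{x}{x_1}{x_2}{(MN)} \to (\cont{x}{x_1}{x_2}{M})N$, I would apply the Generation Lemma to $\Gamma \vdashr (\cont{x}{x_1}{x_2}{M})N:\tS$ to expose the typings of $M$, $N$ and of the contracted variable, then rebuild the derivation by first contracting and then applying $(\to_E)$, using that the side condition $x_1,x_2 \notin Fv(N)$ forces the $\unc$/$\sqcup$ bookkeeping of the bases to match. The other $(\gamma_i)$ and $(\omega_i)$ cases follow the same pattern, each time peeling off the generation of $M'$ and reinserting the $(Cont)$/$(Weak)$ rule in the position dictated by the redex $M$; where the normalised placement of weakenings or contractions differs only up to $\equiv$, I would close the gap with Subject Equivalence (Proposition~\ref{prop:subequiv}).

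The subtle case is $(\gamma\omega_2)$, $\cont{x}{x_1}{x_2}{(\weak{x_1}{M})} \to M\isub{x}{x_2}$, active only when $\mathcal{R}=\{\co,\w\}$. Here $M\isub{x}{x_2}$ is a renaming, and I would again invoke the Inverse Substitution Lemma with the substituted term being the variable $x$: from $\Gamma \vdashr M\isub{x}{x_2}:\tS$ one obtains $\Gamma', x_2:\cap_{i}^{n}\tT_i \vdashr M:\tS$ with the $\tT_i$ the types assigned to $x$, then reintroduce the vacuous weakening on $x_1$ via $(Weak)$ and contract $x_1,x_2$ into $x$ via $(Cont)$, checking that the type $\tA\cap(\cap_{i}^{n}\tT_i)$ placed on $x$ agrees with $\Gamma$. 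I expect the main obstacle to be exactly this basis arithmetic: tracking how the intersection type of the contracted or weakened variable is distributed across $\unc$ and $\sqcup$ in each modality (according to whether $\co \in \mathcal{R}$ and whether $\w \in \mathcal{R}$), and verifying that the reconstructed basis is syntactically the given $\Gamma$ and not merely an equivalent one.
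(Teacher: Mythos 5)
Your proof is correct and is essentially the paper's own argument: the paper disposes of this proposition with the single line ``by the case study according to the applied reduction,'' and your case analysis---the $(\beta)$ case via the Inverse Substitution Lemma (Proposition~\ref{prop:inv-subst-lemma}), which the paper states immediately beforehand for exactly this purpose, and the $(\gamma_i)$, $(\omega_i)$, $(\gamma\omega_i)$ commutations handled by the Generation Lemma followed by reassembly of the $(Cont)$/$(Weak)$ rules---is precisely the case study the paper leaves implicit. The basis bookkeeping you flag at the end (how the intersection type of the contracted or weakened variable is redistributed across $\unc$ and $\sqcup$ so that the rebuilt derivation ends in exactly $\Gamma$) is indeed the only delicate point, and your treatment of it is no less complete than the paper's.
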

\begin{proof} By the case study according to the applied reduction. 
\end{proof}

\begin{thm}[SN $\Rightarrow$ typeability]\label{thm:SNtypable}
All strongly normalising $\lR$-terms are typeable in the $\lR\cap$
system.
\end{thm}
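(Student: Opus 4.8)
The plan is to prove the statement by well-founded induction on the pair $\langle \ell(M), |M|\rangle$ ordered lexicographically, where $\ell(M)$ denotes the length of a longest $\lR$-reduction sequence starting from $M$ (finite because $M\in\SNR$) and $|M|$ denotes the size of $M$. The tools already at hand are exactly what this induction needs: $\lR$-normal forms are typeable (Proposition~\ref{prop:nf-are-typed}), and head subject expansion holds (Proposition~\ref{prop:sub-exp}), the latter resting internally on the inverse substitution lemma (Proposition~\ref{prop:inv-subst-lemma}) for its $\beta$ case.

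First I would dispose of the base case: if $M$ is an $\lR$-normal form, it is typeable directly by Proposition~\ref{prop:nf-are-typed}. Otherwise $M$ reduces, and I split on whether $M$ is itself a redex, i.e.\ whether some reduction of $M$ takes place at the root. If $M$ is a root redex for one of $\beta,\omega_1,\omega_2,\omega_3,\gamma_0,\gamma_0',\gamma_1,\gamma_2,\gamma_3,\gamma\omega_1,\gamma\omega_2$, pick the corresponding reduct $M\to M'$. Then $\ell(M')<\ell(M)$, so by the induction hypothesis $M'$ is typeable, and head subject expansion (Proposition~\ref{prop:sub-exp}) lifts a typing of $M'$ to a typing of $M$. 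In the $\beta$ case the side condition of Proposition~\ref{prop:sub-exp} requires the argument $P$ of $M\equiv(\lambda x.N)P$ to be typeable; since $P$ is a strict subterm it satisfies $|P|<|M|$ and $\ell(P)\le\ell(M)$, so $\langle\ell(P),|P|\rangle$ is strictly smaller in the lexicographic order and $P$ is typeable by the same induction. This is the one point where using the pair $\langle \ell(M),|M|\rangle$ rather than $\ell(M)$ alone is essential.

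If $M$ is not a root redex yet is not normal, the redex lies strictly inside, and I would reconstruct a typing from typings of the immediate subterms, which are all in $\SNR$, of strictly smaller size, and of no greater reduction length, hence typeable by the induction hypothesis. I case on the head constructor: for $M=\lambda x.P$ apply $(\to_I)$ to a typing of $P$, adjusting the basis for $x$ according to the $\w$-modality (implicit weakening supplies $x:\tA$ when $\w\notin\mathcal R$, while well-formedness of $\lambda x.P$ already forces $x\in Fv(P)$ when $\w\in\mathcal R$); for $M=\weak{x}{P}$ apply $(Weak)$; for a stuck contraction $M=\cont{x}{x_1}{x_2}{(N_1N_2)}$ with $x_1\in Fv(N_1)$ and $x_2\in Fv(N_2)$ (so that neither $\gamma_2$ nor $\gamma_3$ fires) apply $(Cont)$ to a typing of $N_1N_2$; and for an application with a variable head, $M\equiv xN_1\cdots N_k$, type each argument $N_i$ by the induction hypothesis and assign $x$ the arrow type whose successive domains are the (intersections of the) types derived for $N_1,\dots,N_k$ and whose target is a fresh atom $p$, then conclude by $k$ applications of $(\to_E)$.

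The main obstacle I anticipate is the bookkeeping in this last, non-root-redex case rather than any conceptual difficulty: reassembling the subderivations requires merging their bases via $\unc$ and $\sqcup$ while respecting the well-formedness side conditions of Figure~\ref{fig:wf-small} — disjointness of free variables when $\co\in\mathcal R$ and occurrence of abstracted or weakened variables when $\w\in\mathcal R$. Well-formedness of $M$ guarantees these conditions for its subterms, so each reassembly step is legal, but checking this uniformly across all four modalities $\mathcal R\subseteq\{\co,\w\}$ is where the care lies. Once that is verified, every $M\in\SNR$ receives a type in $\lR\cap$, which is the claim.
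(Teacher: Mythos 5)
Your proposal follows essentially the same route as the paper's own proof: induction on the length of the longest reduction sequence with a subinduction on term size, normal forms handled by Proposition~\ref{prop:nf-are-typed}, root redexes handled by head subject expansion (Proposition~\ref{prop:sub-exp}) with the $\beta$-argument $P$ typed via the size component of the lexicographic measure, and non-root cases reassembled from typings of the immediate subterms. The only differences are cosmetic: you spell out the variable-headed application case (arrow type ending in a fresh atom) that the paper dismisses as ``easy'', while omitting the paper's appeal to subject equivalence (Proposition~\ref{prop:subequiv}), which is needed because reduction is taken modulo the structural equivalences of Figure~\ref{fig:equiv-lR} --- worth adding, but not a change of method.
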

\begin{proof} By induction on the length of the longest
reduction path out of a strongly normalising term $M$, with a
subinduction on the size of $M$. We also use \jel{Proposition~\ref{prop:subequiv}}.
\begin{itemize}
\item If $M$ is a normal form, then $M$ is typeable by
Proposition~\ref{prop:nf-are-typed}. \item If  $M$ is itself a
redex, let $M'$ be the term obtained by contracting the redex $M$.
$M'$ is also strongly normalising, hence by IH it is typeable.
Then $M$ is typeable, by Proposition~\ref{prop:sub-exp}. Notice
that, if $M \equiv (\lambda x.N)P \to_{\beta} N\isub{P}{x} \equiv
M'$, then, by IH, $P$ is typeable - since the length of the
longest reduction path out of $P$ is not larger than that of $M$,
and the size of $P$ is smaller than the size of $M$. \item Next,
suppose that  $M$ is not itself a redex nor a normal form. Then
$M$ is of one of the following forms: $\lambda x.N$, $\lambda
x.\weak{x}{N}$, $xN_1...N_m$, $\weak{x}{N}$, or
$\cont{x}{x_1}{x_2}{NP},\;x_1 \in Fv(N),\;x_2 \in Fv(P)$ (in each
case some $N_i$ or $P$ are \emph{not} in normal form). $N_i$ and
$P$ are typeable by IH, as subterms of $M$. Then, it is easy to
build the typing for $M$. 
\end{itemize}
\end{proof}



\subsection{SN $\Rightarrow$ Typeability in $\lGR \cap$}
\label{sec:SNtype_Gtz}

Finally, we want to prove that if a $\lGR$-term is SN, then it is
typeable in the system $\lGR \cap$. We follow the procedure used
in Section~\ref{sec:SNtype}. The proofs are similar to the ones in
Section~\ref{sec:SNtype} and are omitted.

The abstract syntax of $\lGR$-normal forms is the following:

\hspace*{5mm}
$
\begin{array}{rcl}
 t_{nf} & ::= & x\,|\,\lambda
x.t_{nf}\,|\,x(t_{nf}::k_{nf})\,|\,\lambda x.
\weak{x}{t_{nf}}\SKIP{,\;\mbox{if}\;\;x \notin Fv(t_{nf})}
\,|\, \cont{x}{y}{z}{y(t_{nf}::k_{nf})}\\
k_{nf} & ::= & \bindx t_{nf}\,|\,t_{nf}::k_{nf}\,|\,\bindx
\weak{x}{t_{nf}}\SKIP{,\;\mbox{if}\;\;x \notin Fv(t_{nf})}
\,|\, \cont{x}{y}{z}{(t_{nf}::k_{nf})},\;y \in Fv(t_{nf}), z \in Fv(k_{nf})\\
w_{nf} & ::= & \weak{x}{e_{nf}}\SKIP{,\;\mbox{if}\;\;x \notin
Fv(e_{nf})}\,|\,\weak{x}{w_{nf}}\SKIP{,\;\mbox{if}\;\;x \notin Fv(w_{nf})}
\end{array}
$

We use $e_{nf}$ for any $\lGR$-expression in normal form.
\begin{prop}\label{prop:nf-are-typ}
$\lGR$-normal forms are typeable in the system $\lGR \cap$.
\end{prop}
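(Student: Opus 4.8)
The plan is to imitate the proof of Proposition~\ref{prop:nf-are-typed} for $\lR$, but now as a \emph{simultaneous} induction over the three mutually recursive categories of $\lGR$-normal forms: the term normal forms $t_{nf}$, the context normal forms $k_{nf}$, and the weakening normal forms $w_{nf}$. For a term one proves the existence of a basis $\Gamma$ and a strict type $\tS$ with $\Gamma \vdashr t_{nf}:\tS$; for a context one must additionally exhibit a suitable \emph{stoup}, i.e.\ produce $\Gamma$, $\tA$ and $\tS$ with $\Gamma;\tA \vdashr k_{nf}:\tS$. The weakening normal forms are typed as either terms or contexts according to the sort of the expression $e_{nf}$ underneath the outermost $\odot$.

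First I would dispatch the leaf and binding cases. A variable $x$ is typed directly by $(Ax_{iw})$ or $(Ax_{ew})$. For $\lambda x.t_{nf}$ the induction hypothesis gives $\Gamma,x:\tA \vdashr t_{nf}:\tS$ (extending the basis by an arbitrary $\tA$ when $x\notin Fv(t_{nf})$ and $\w\notin\mathcal{R}$, which is legitimate by the implicit-weakening axiom), and $(\to_R)$ concludes with type $\tA\to\tS$. The forms carrying an outermost weakening, namely $\lambda x.\weak{x}{t_{nf}}$ and both shapes of $w_{nf}$, are handled by typing the body by the induction hypothesis and then inserting $(Weak_t)$ or $(Weak_k)$, whose only effect is to add a fresh assumption $x:\tB$ on the left; the abstraction case then closes with $(\to_R)$.

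The cases that genuinely use the sequent structure are the cuts and the cons. For $k_{nf}\equiv\bindx t_{nf}$ the induction hypothesis yields $\Gamma,x:\tA \vdashr t_{nf}:\tS$ and $(Sel)$ produces $\Gamma;\tA \vdashr \bindx t_{nf}:\tS$, so the stoup is simply the type of the selected variable; the variant $\bindx \weak{x}{t_{nf}}$ inserts $(Weak_t)$ before $(Sel)$. For $t_{nf}::k_{nf}$ I would type $t_{nf}$ and $k_{nf}$ by the induction hypothesis and apply the simplest instance ($n=m=1$, all types strict) of $(\to_L)$, which builds the stoup $\tS\to\tT$ out of $\Gamma_1 \vdashr t_{nf}:\tS$ and $\Delta;\tT \vdashr k_{nf}:\tR$. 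The two cut-headed term forms $x(t_{nf}::k_{nf})$ and $\cont{x}{y}{z}{y(t_{nf}::k_{nf})}$ are then obtained by producing such a cons with stoup $\tS\to\tT$, typing the head variable with exactly that type $\tS\to\tT$ from an axiom, and combining the two with $(Cut)$ at $n=1$; the contraction form finishes with $(Cont_t)$, which merges the two copies $y:\tA$ and $z:\tB$ into $x:\tA\cap\tB$. The context form $\cont{x}{y}{z}{(t_{nf}::k_{nf})}$ proceeds identically but ends with $(Cont_k)$, preserving the stoup.

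The only new bookkeeping compared with the $\lR$ case is threading the stoup through the simultaneous induction: whenever a cut or a cons is assembled, the type of the head term must coincide with the stoup type of the context it meets. This matching is never an obstruction, since we are free to pick the type in each axiom and each induction hypothesis, and we simply choose the head's type to be the arrow type that the $(\to_L)$ step placed in the stoup. The structural fact that makes everything go through—already the crux of the $\lR$ proof—is that the four resource rules $(Cont_t)$, $(Weak_t)$, $(Cont_k)$, $(Weak_k)$ modify only the left-hand side of a judgement, leaving both the subject's type and the stoup untouched, so their insertion never disturbs the inductive invariant. I therefore expect no real difficulty, in line with the paper's remark that this proof is a routine adaptation of the one in Section~\ref{sec:SNtype}.
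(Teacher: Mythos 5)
Your proposal is correct and follows essentially the same route as the paper, which disposes of this proposition with a structural induction on $e_{nf}$ (the paper's proof is exactly the "easy induction" you carry out, left as routine). The details you supply—threading the stoup through the cons/cut cases via $(\to_L)$ and $(Cut)$ at $n=m=1$, and observing that the resource rules touch only the left-hand side of judgements—are precisely the bookkeeping the paper leaves implicit, including its remark to that effect in the proof of the corresponding $\lR$ proposition.
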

\begin{proof} The proof goes by an easy induction on the structure of
$e_{nf}$. 
\end{proof}

\noindent The following two lemmas explain the behavior of the
meta operators $\isub{\;}{\;}$ and $\app\;$  during expansion.

\begin{lem}[Inverse substitution lemma]\label{prop:inv-subst-Gtz}
\rule{0in}{0in}
\begin{itemize}
\item[(i)] Let $\;\Gamma \vdashr t\isub{u}{x}:\tS\;$ and $u$
typeable. Then, there exist \jel{$\Delta = \Delta_1 \sqcup ... \sqcup \Delta_n$} and \jel{$\cap^n_i\tT_i, \;i=1,...,n$}
such that $\jel{\Delta_i} \vdashr u:\tT_i$ \jel{for all} $\sil{i=1, \ldots, n}$ and $\Gamma',
x:\cap\tT_i \vdashr t:\tS$, where $\Gamma = \Gamma' \unc \Delta$.
\item[(ii)] Let $\;\Gamma;\tA \vdashr k\isub{u}{x}:\tS\;$ and $u$
typeable. Then, there exist \jel{$\Delta = \Delta_1 \sqcup ... \sqcup \Delta_n$} and \jel{$\cap^n_i\tT_i, \;i=1,...,n$}
such that $\jel{\Delta_i}\vdashr u:\tT_i$ \jel{for all} $\sil{i=1, \ldots, n}$ and $\Gamma',
x:\cap\tT_i;\tA \vdashr k:\tS$, where $\Gamma = \Gamma' \unc
\Delta$.
\end{itemize}
\end{lem}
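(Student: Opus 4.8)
The plan is to mirror the proof of the corresponding statement for the natural deduction base, Proposition~\ref{prop:inv-subst-lemma}, carrying out a structural induction. Since $\lGR$-terms and $\lGR$-contexts are mutually defined, parts (i) and (ii) must be proved by \emph{simultaneous} induction on the structure of the term $t$ in (i) and of the context $k$ in (ii): the cut case $t \equiv vk$ of (i) appeals to (ii) for $k$, while the cons case $v::k$ and the selection case $\bindy v$ of (ii) appeal to (i) for the term components. In every case I would first read off, via the Generation Lemma for $\lGR\cap$ (Proposition~\ref{prop:intGL}), the shape of the derivation of the \emph{substituted} expression $t\isub{u}{x}$ (resp. $k\isub{u}{x}$), inspect the relevant clause of the substitution definition in Figure~\ref{fig:sub-lGR}, apply the induction hypothesis to the immediate subexpressions, and finally reassemble a derivation of $t$ (resp. $k$) with $x$ carrying an intersection type, distributing the free-variable bases of $u$ through $\unc$ and $\sqcup$ exactly as dictated by the $(Cut)$ and $(\to_L)$ rules.

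The base and structural cases are routine. For $t \equiv x$ we have $x\isub{u}{x} = u$, so $\Gamma \vdashr u:\tS$; taking $\Gamma' \equiv \emptyset$, $\Delta \equiv \Gamma$, $n=1$ and $\tT_1 \equiv \tS$ gives $x:\tS \vdashr x:\tS$ by the axiom. For $t \equiv y$ with $y \neq x$ the clause $y\isub{u}{x}=y$ applies; this situation can only arise when $\w \notin \mathcal{R}$, so the implicit-weakening axiom lets us split $\Gamma = \Gamma \sqcup \Delta$ with $\Delta \vdashr u:\tT$ supplied by the typeability of $u$. The abstraction, selection, cut, cons and application cases push the substitution inward, as $(\lambda y.v)\isub{u}{x}=\lambda y.v\isub{u}{x}$, $(\bindy v)\isub{u}{x}=\bindy v\isub{u}{x}$, $(vk)\isub{u}{x}=v\isub{u}{x}\,k\isub{u}{x}$ and $(v::k)\isub{u}{x}=v\isub{u}{x}::k\isub{u}{x}$, and are closed by applying the induction hypothesis to the components and reapplying $(\to_R)$, $(Sel)$, $(Cut)$ or $(\to_L)$; here the intersection type collected for $x$ is the union of the ones obtained for the relevant premises.

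The genuinely delicate cases are those of the resource operators, handled exactly as in Proposition~\ref{prop:inv-subst-lemma}. When $t \equiv \weak{x}{e}$ the definition yields $\weak{Fv(u)\setminus Fv(e)}{e}$, so the substituted variable disappears and $x$ may be given any type of $u$; when $t \equiv \weak{y}{e}$ with $y\neq x$ the weakening set is $\{y\}\setminus Fv(u)$ and one inverts through $(Weak_t)$/$(Weak_k)$ after applying the induction hypothesis to $e$. The contraction case with a head variable $y\neq x$ is analogous, using $(Cont_t)$/$(Cont_k)$. The crux is the contraction-on-$x$ case $t \equiv \cont{x}{x_1}{x_2}{e}$, whose substitute is $\cont{Fv(u)}{Fv(u_1)}{Fv(u_2)}{e\isub{u_1}{x_1}\isub{u_2}{x_2}}$ with $u_1,u_2$ fresh renamings of $u$. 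One must use the Generation Lemma to strip off the outer contraction that packs $Fv(u)$, and then apply the induction hypothesis \emph{twice}, once for $\isub{u_1}{x_1}$ and once for $\isub{u_2}{x_2}$.

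The main obstacle I expect is precisely this last reconciliation. After the two applications of the induction hypothesis one obtains intersection types $\cap_i^n\tT'_i$ for $x_1$ and $\cap_i^n\tT''_i$ for $x_2$, together with bases $\Delta'_i \vdashr u_1:\tT'_i$ and $\Delta''_i \vdashr u_2:\tT''_i$; since $u_1$ and $u_2$ are mere renamings of $u$, I would argue $\cap_i^n\tT'_i \equiv \cap_i^n\tT''_i \equiv \cap_i^n\tT_i$ and set $\Delta_i \equiv \Delta'_i \unc \Delta''_i$ so that $\Delta_i \vdashr u:\tT_i$, finally recombining the two copies through one application of $(Cont_t)$ (resp. $(Cont_k)$) to produce $x:\cap_i^n\tT_i$. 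The bookkeeping of free-variable lists, the choice between disjoint and standard union hidden in $\unc$ according to whether $\co \in \mathcal{R}$, and the need to keep the stoup type $\tA$ untouched throughout part (ii) are where the care must be concentrated; everything else follows the $\lR$ template closely.
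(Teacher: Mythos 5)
Your proposal is correct and follows essentially the same route as the paper: the paper's proof is exactly a mutual (simultaneous) induction on the structure of terms and contexts, carried out as a transcription of the ND-base argument of Proposition~\ref{prop:inv-subst-lemma}, with the Generation Lemma used to invert derivations and the double application of the induction hypothesis in the contraction-on-$x$ case. The paper omits the case analysis you spell out, so your write-up is a faithful (and more detailed) rendering of the intended proof.
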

\begin{proof} The proof goes straightforward by mutual induction on the
structure of terms and contexts. 
\end{proof}

\begin{lem}[Inverse append lemma]
\label{prop:inverse-app} If $~~\Gamma;\tA\vdashr \app{k}{k'}:\tS$,
then $\Gamma=\Gamma' \unc \Gamma''$ \jel{where $\Gamma'= \Gamma'_1 \sqcup ... \sqcup \Gamma'_n$} and there is a type
\jel{$\cap^n_i\tT_i, \;i=1,...,n$} such that $\jel{\Gamma'_i};\tA\vdashr k:\tT_i\;$ \jel{for all}
$\sil{i=1, \ldots, n}$ and $\Gamma'';\jel{\cap^n_i\tT_i}\vdashr k':\tS$.
\end{lem}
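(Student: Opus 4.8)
The plan is to prove the statement by induction on the structure of the context $k$, following exactly the recursive definition of the append operator $@$ and running, in reverse, the argument of the Append lemma (Proposition~\ref{prop:app-lemma}). At each step I would strip the outermost constructor of $k$ with the matching clause of the Generation lemma for $\lGR\cap$ (Proposition~\ref{prop:intGL}), apply the induction hypothesis to the strictly smaller context occurring inside the append, and then rebuild a typing of $k$ with the same rule used in the forward direction. Since $@$ creates no substitution, the Inverse Substitution lemma is not needed here; the whole argument is a bookkeeping of bases and stoups.

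For the base case $k=\bindx t$ we have $\app{k}{k'}=\bindx(tk')$. Clause (iv) (the $(Sel)$ rule) gives $\Gamma,x{:}\tA\vdashr tk':\tS$, and clause (v) (the $(Cut)$ rule) splits $\Gamma,x{:}\tA=\Gamma^*\unc\Gamma''$ with $\Gamma^*=\Gamma^*_1\sqcup\ldots\sqcup\Gamma^*_n$, types $\tT_i$ such that $\Gamma^*_i\vdashr t:\tT_i$ for all $i$, and $\Gamma'';\cap_i^n\tT_i\vdashr k':\tS$. As $x$ is bound by the selection it is not free in $k'$, so $x\notin Dom(\Gamma'')$ by Proposition~\ref{prop:bases-weak}; hence $x$ occurs in $\Gamma^*$ and, the component bases sharing a common domain, in each $\Gamma^*_i$, say $x{:}\tA_i\in\Gamma^*_i$ with $\tA=\cap_i^n\tA_i$. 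Setting $\Gamma'_i=\Gamma^*_i\setminus\{x\}$ and re-applying $(Sel)$ — after the type-adjustment discussed in the last paragraph — yields $\Gamma'_i;\tA\vdashr\bindx t:\tT_i$, with $\Gamma'=\Gamma'_1\sqcup\ldots\sqcup\Gamma'_n$ and $\Gamma=\Gamma'\unc\Gamma''$, as required.

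For the inductive step I would treat the three remaining shapes of $k$. If $k=u::k_0$, then $\app{k}{k'}=u::(\app{k_0}{k'})$; clause (vi) (the $(\to_L)$ rule) exposes a typing $\Delta^a;\cap_j\tT_j\vdashr\app{k_0}{k'}:\tS$ of the inner append together with the typings of $u$, I apply the induction hypothesis to the former to obtain intermediate types $\tU_p$ for $k_0$, and I reassemble $u::k_0$ by $(\to_L)$, the $\tU_p$ becoming the $\tT$'s of the conclusion. If $k=\weak{x}{k_0}$ or $k=\cont{x}{y}{z}{k_0}$, clauses (viii) and (x) strip the resource operator, the induction hypothesis applies to $k_0$, and I reintroduce the weakening (resp.\ contraction) on each component base on the $\Gamma'$ side of the split — legitimate because the weakened/contracted variables are bound in $k_0$ and hence, by Proposition~\ref{prop:bases-weak}, absent from $\Gamma''$. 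In all three cases the only genuine computation is the base algebra: one checks the distributivity identity $\bigsqcup_p(\Theta\unc\Delta'_p)=\Theta\unc\bigsqcup_p\Delta'_p$, valid because the common part $\Theta$ is idempotent under $\sqcup$ and its domain is disjoint from that of the varying parts.

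The step I expect to be the main obstacle is reconciling the stoup type in the base case: the $(Cut)$ rule distributes $x{:}\tA$ over the $n$ typings of $t$ as $x{:}\tA_i\in\Gamma^*_i$ with $\tA=\cap_i^n\tA_i$, so the raw typings produce $\bindx t$ with stoups $\tA_i$ rather than the single $\tA$ demanded by the statement. Closing the case therefore requires a type–strengthening (monotonicity) property: if $\Gamma,x{:}\tA_i\vdashr t:\tT_i$ and $\tA$ contains all the strict components of $\tA_i$, then $\Gamma,x{:}\tA\vdashr t:\tT_i$. This is routine in a strict system — every use of $x$ goes through an axiom projecting a single component, and enlarging the type of $x$ keeps all those components available, even under the exact-basis axiom $(Ax_{ew})$ — but it is the one place where the argument is not a pure mirror of the forward Append lemma, and I would isolate it as a small auxiliary lemma proved by induction on derivations.
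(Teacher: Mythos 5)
Your overall route is exactly the paper's: the paper proves this lemma in one line, ``by induction on the structure of the context $k$'', and your case analysis (one case per clause of the definition of $@$, stripping with the Generation lemma, applying the induction hypothesis to the inner append, rebuilding with the same rule) is the intended elaboration of that induction. Moreover, the obstacle you single out in the $(Sel)$ base case is real and correctly diagnosed: the $(Cut)$ clause of the Generation lemma distributes the type of $x$ over the component bases as types $\tA_i$ whose intersection only recovers $\tA$, so one does need the monotonicity lemma you isolate (enlarging the intersection type of a basis variable preserves derivability) to restore the single stoup $\tA$; that lemma holds in all four systems by induction on derivations (note only that uses of a basis variable can also pass through $(Cont_t)$/$(Cont_k)$, not just through axioms, though the induction still goes through).

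There is, however, one step that fails as written: the appeal to Proposition~\ref{prop:bases-weak} to conclude that the selected variable $x$ (base case), respectively the contracted variables (case $k=\cont{x}{y}{z}{k_0}$), cannot occur in $Dom(\Gamma'')$. That proposition gives $Dom(\Gamma'')=Fv(k')$ only when $\w\in\mathcal{R}$; in $\lGo$ and $\lGc$ it gives only $Dom(\Gamma'')\supseteq Fv(k')$, and since the Generation lemma and the induction hypothesis assert merely the \emph{existence} of some decomposition, nothing prevents the decomposition they hand you from parking the variable (in $\lGc$, where $\unc$ is disjoint union) or part of its intersection type (in $\lGo$, where $\unc$ is $\sqcup$) inside $\Gamma''$ as an unused assumption. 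To close these cases you need, besides your strengthening lemma, the dual junk-management facts that hold precisely because weakening is implicit in those calculi: thinning (if $\Gamma,y{:}\tB\vdashr e:\tS$, possibly with a stoup, and $y\notin Fv(e)$, then $\Gamma\vdashr e:\tS$) to clean $\Gamma''$, and admissible weakening (adding a fresh assumption) to insert $x{:}\tA$ into those component bases $\Gamma^*_i$ that do not mention $x$ at all, which can happen when $\w\notin\mathcal{R}$. These are routine inductions of the same flavour as your auxiliary lemma, so your architecture survives, but without them the proof is incomplete for half of the cube. Two minor points: in the $\weak{x}{k_0}$ case none of this is needed, since the basis extension $\Gamma=\Gamma^-,x{:}\tB$ already forces $x\notin Dom(\Gamma'')$ in every calculus; and the weakened variable is not ``bound in $k_0$'' --- it simply does not occur free in $k_0$ --- whereas the contracted $y,z$ do occur free in $k_0$, which is exactly why they land in the basis decomposed by the induction hypothesis and must be tracked.
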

\begin{proof} The proof goes by the induction on the structure of the
context $k$.

\end{proof}

Now we prove that the type of an expression is preserved during
the expansion.

\begin{prop}[Head subject expansion]
\label{prop:sub-exp-Gtz}
\rule{0in}{0in}
\begin{itemize}
\item[(i)] For every $\lGR$-term $t$: if $t \to t'$, $t$ is
contracted redex and $\;\Gamma \vdashr t':\tS\;$, then $\;\Gamma
\vdashr t:\tS$. \item[(ii)] For every $\lGR$-context $k$: if $k
\to k'$, $k$ is contracted redex and $\;\Gamma;\tA \vdashr
k':\tS\;$, then $\;\Gamma;\tA \vdashr k':\tS$.
\end{itemize}
\end{prop}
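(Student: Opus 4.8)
The plan is to argue by case analysis on the reduction rule contracted at the head of $t$ (respectively $k$), treating terms and contexts by simultaneous mutual induction, exactly as in the subject reduction proof (Proposition~\ref{prop:sr-sequent}). Since the redex is contracted at the outermost position, each case amounts to reversing a single inference: starting from a derivation of the reduct $t'$ (resp.\ $k'$), I would use the Generation Lemma (Proposition~\ref{prop:intGL}) to decompose that derivation, and then reassemble a derivation of $t$ (resp.\ $k$) of the same type over the same basis $\Gamma$. In effect every case is the converse of the corresponding case of Proposition~\ref{prop:sr-sequent}, so the placement of structural rules and the side conditions on free variables transfer directly. The mutual framing is needed because rules like $\beta$ send a term to a cut $u(\bindx tk)$ containing a context, so the two halves of the statement feed into each other. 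Subject equivalence (Proposition~\ref{prop:subequiv-sequent}) is used silently to absorb any $\equiv$-steps the reduction relation allows.

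The propagation rules $\gamma_0,\gamma^{\;\;'}_0,\gamma_1$--$\gamma_6$, $\omega_1$--$\omega_6$, $\gamma\omega_1$ are the routine cases. Here the reduct and the redex differ only in the position of a $\cont{x}{x_1}{x_2}{\cdot}$ or $\weak{x}{\cdot}$ operator relative to an abstraction, a cut, a selection, or a cons. Peeling off the outer $(Cont_t)/(Cont_k)$ or $(Weak_t)/(Weak_k)$ rule from the reduct's derivation via Generation, and re-introducing it one constructor further out, rebuilds the redex; the freshness side conditions (such as $x_1,x_2 \notin Fv(t)$ in $\gamma_6$ or $x \neq y$ in $\omega_4$) are precisely what guarantee the rearranged derivation is well formed. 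The case $\gamma\omega_2$, where $\cont{x}{x_1}{x_2}{(\weak{x_1}{e})} \to e\isub{x}{x_2}$, is only mildly different: from $\Gamma \vdashr e\isub{x}{x_2}:\tS$ I would recover $\Gamma', x_2:\tA \vdashr e:\tS$ for the renamed variable (an instance of the Inverse Substitution Lemma, Lemma~\ref{prop:inv-subst-Gtz}), then apply $(Weak_t)$ to introduce $x_1:\tA$ and $(Cont_t)$ to merge $x_1,x_2$ into $x:\tA\cap\tA\equiv\tA$.

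The genuinely computational rules are $\beta$, $\sigma$, $\pi$ and $\mu$. For $\beta$, $(\lambda x.t)(u::k)\to u(\bindx tk)$, I would read the subject reduction diagram for $\beta$ upwards: Generation applied successively to $(Cut)$, $(Sel)$, $(Cut)$, $(\to_R)$ and $(\to_L)$ exposes the obligations $\Gamma''_j,x:\tA_j\vdashr t:\tR_j$, $\Delta'_i\vdashr u:\tS_i$ and $\Delta'';\cap_j^m\tR_j\vdashr k:\tS$, which a single $(\to_R)$, one $(\to_L)$ and one $(Cut)$ reassemble into $(\lambda x.t)(u::k)$. The $\sigma$ case, $t(\bindx v)\to v\isub{t}{x}$, is where the Inverse Substitution Lemma (Lemma~\ref{prop:inv-subst-Gtz}) does the real work: it supplies the splitting $\Gamma=\Gamma'\unc\Delta$ with $\Delta_i\vdashr t:\tT_i$ and $\Gamma',x:\cap_i^n\tT_i\vdashr v:\tS$, from which $(Sel)$ and $(Cut)$ rebuild $t(\bindx v)$. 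Symmetrically, the $\pi$ case $(tk)k'\to t(\append{k}{k'})$ relies on the Inverse Append Lemma (Lemma~\ref{prop:inverse-app}) to split the derivation of the appended context, after which two $(Cut)$ applications restore $(tk)k'$. Finally $\mu$, $\bindx xk\to k$, just inserts a trivial cut against an axiom for $x$ underneath a $(Sel)$.

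The main obstacle is the bookkeeping of intersection types together with the two basis operators $\sqcup$ and $\unc$ across the $(Cut)$ and $(\to_L)$ rules, aggravated by the fact that the stoup of $(\to_L)$ carries the shape $\cap_j^m(\cap_i^n\tS_i\to\tT_j)$ rather than a single arrow. The delicate point is to match the family $\{\tT_i\}$ produced by the Inverse Substitution and Inverse Append Lemmas with the family demanded by the $(Cut)/(\to_L)$ rule rebuilding the redex, and to verify that the reassembled $\Gamma'\unc\Delta$ (and the nested $\sqcup$-unions of the $\Gamma'_i$) recombine to exactly the basis $\Gamma$ of the hypothesis. Once the two inverse lemmas are in hand, the remainder is mechanical, which is why the authors can state that the argument parallels the $\lR\cap$ proof of Section~\ref{sec:SNtype}.
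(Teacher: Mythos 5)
Your proposal is correct and takes essentially the same route as the paper: the paper's own proof is just the one-line remark that it proceeds ``by the case study according to the applied reduction,'' with the Inverse Substitution Lemma~\ref{prop:inv-subst-Gtz} and Inverse Append Lemma~\ref{prop:inverse-app} stated immediately beforehand precisely to handle the $\sigma$ and $\pi$ cases, exactly as you deploy them. The only caveat---inherited from the paper's statement itself, whose $\lR$ analogue (Proposition~\ref{prop:sub-exp}) makes the proviso explicit---is that the $\sigma$ case requires typeability of the disappearing term $t$ before Lemma~\ref{prop:inv-subst-Gtz} can be invoked; that hypothesis is not among the proposition's assumptions but is available in the one place the proposition is used, namely the induction of Theorem~\ref{thm:SNtypable-Gtz}.
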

\begin{proof}
The proof goes by the case study according to the applied
reduction. 
\end{proof}

\begin{thm}[SN $\Rightarrow$ typeability]\label{thm:SNtypable-Gtz}
All strongly normalising $\lGR$ expressions are typeable in the
$\lGR\cap$ systems.
\end{thm}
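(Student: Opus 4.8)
The plan is to reproduce, in the sequent base, the two-step argument used for $\lR\cap$ in the proof of Theorem~\ref{thm:SNtypable}. Both ingredients are already available: every $\lGR$-normal form is typeable (Proposition~\ref{prop:nf-are-typ}) and the calculus enjoys head subject expansion (Proposition~\ref{prop:sub-exp-Gtz}). I would argue by induction on the length of the longest reduction sequence issuing from a strongly normalising expression $e$, with an inner induction on its size $\size{e}$, treating terms and contexts simultaneously since a $\lGR$-expression is of either sort.

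If $e$ is a normal form, typeability is immediate from Proposition~\ref{prop:nf-are-typ}. If $e$ is itself a redex, let $e'$ be its contractum; then $e'$ is strongly normalising with a strictly shorter longest reduction path, hence typeable by the outer induction hypothesis, and Proposition~\ref{prop:sub-exp-Gtz} (together with subject equivalence, Proposition~\ref{prop:subequiv-sequent}, to absorb the $\equiv$-steps) lifts the typing back to $e$. The delicate rule here is $(\sigma)$, $t(\bindx v)\to v\isub{t}{x}$: recovering a typing of the redex passes through the inverse substitution lemma (Lemma~\ref{prop:inv-subst-Gtz}), which requires the substituted head $t$ to be typeable. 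As in the $\lR$ case, I would secure this from the inner induction, $t$ being a proper subexpression of $t(\bindx v)$ of strictly smaller size whose longest reduction path is no longer; the $(\pi)$-rule is handled analogously through the inverse append lemma (Lemma~\ref{prop:inverse-app}), while $(\beta)$, $(\mu)$ and the structural $(\gamma)$/$(\omega)$ rules are purely rearranging and need no extra typeability hypothesis.

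Finally, if $e$ is neither a normal form nor a redex, I would split it along the grammar of $\lGR$-expressions --- an abstraction $\lambda x.t$, a cut $tk$, a cons $t::k$, a selection $\bindx t$, a weakening $\weak{x}{e_0}$ or a contraction $\cont{x}{y}{z}{e_0}$ --- observe that each immediate subexpression is again strongly normalising of strictly smaller size, hence typeable by the inner induction hypothesis, and reassemble a typing of $e$ with the matching rule of $\lGR\cap$. I expect the only real obstacle to be the bookkeeping of bases: the $(Cut)$ and $(\to_L)$ rules split and recombine bases through $\sqcup$ and $\unc$, and the $(Cont_t)$, $(Cont_k)$, $(Weak_t)$, $(Weak_k)$ rules must be matched against the intersection decomposition of the relevant variable's type in the stoup. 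Since every rule is syntax-directed and the generation lemma for $\lGR\cap$ (Proposition~\ref{prop:intGL}) pins down the shape of each derivation, this recombination is routine once the subexpressions have been typed, so I do not anticipate any genuinely new difficulty beyond those already met in the $\lR\cap$ proof.
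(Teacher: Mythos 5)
Your proposal is correct and follows essentially the same route as the paper: the paper's proof is exactly an induction on the length of the longest reduction path with a subinduction on size, using Proposition~\ref{prop:nf-are-typ} for normal forms, Proposition~\ref{prop:sub-exp-Gtz} (via Lemmas~\ref{prop:inv-subst-Gtz} and~\ref{prop:inverse-app}) for redexes, Proposition~\ref{prop:subequiv-sequent} for the equivalence steps, and reassembly of typings of subexpressions otherwise, mirroring the $\lR\cap$ argument of Theorem~\ref{thm:SNtypable}. Your observation that the $(\sigma)$-rule needs the typeability of the substituted head, secured by the inner induction on size, is precisely the sequent analogue of the $\beta$-proviso handled in the paper's ND proof.
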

\begin{proof} The proof is by induction over the length of the longest
reduction path out of a strongly normalising expression $e$, with
a subinduction on the size of $e$. We also use \jel{Proposition~\ref{prop:subequiv-sequent}}. 
\end{proof}

\silv{The  complete characterisation of strongly normalising terms by intersection types
\begin{itemize}
\item in the natural deduction ND-base of the resource control cube is a corollary to Theorem \ref{th:typ=>SN} and \ref{thm:SNtypable}
\item in the sequent LJ-base of the resource control cube is a corollary to Theorem \ref{th:typ=>SN-Gtz} and \ref{thm:SNtypable-Gtz}.
\end{itemize}

\begin{thm}[Complete characteristation of SN]
\rule{0in}{0in}
\begin{itemize}
\item[-] A $\lR$-term is strongly normalising if and only if it is typeable in $\lR \cap$.
\item[-] A $\lGR$-term is strongly normalising if and only if it is typeable in $\lGR \cap$.
\end{itemize}
\end{thm}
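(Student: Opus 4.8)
The plan is to observe that this theorem is a direct combination of the two implications already established separately for each of the two bases of the cube, so that no new argument is required beyond assembling the pieces and checking that the two halves refer to the same calculus.

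First I would treat the natural deduction ND-base. For the forward implication, if a $\lR$-term $M$ is typeable in $\lR\cap$, then $M$ is strongly normalising by Theorem~\ref{th:typ=>SN}, which was obtained by the reducibility method (interpreting strict types as $\mathcal{R}$-saturated sets of closed terms and proving soundness of type assignment). For the converse, if $M$ is strongly normalising, then $M$ is typeable in $\lR\cap$ by Theorem~\ref{thm:SNtypable}, whose proof rests on typeability of normal forms (Proposition~\ref{prop:nf-are-typed}) together with head subject expansion (Proposition~\ref{prop:sub-exp}). Combining the two gives the equivalence for $\lR$-terms.

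Next I would treat the sequent LJ-base in exactly the same fashion. The forward implication is Theorem~\ref{th:typ=>SN-Gtz}, which proves strong normalisation of every typeable $\lGR$-expression by embedding $\lGR$ into $\lR$ via the mappings $\intt{t}$ and $\intc{k}$ (type-preserving and reduction-simulating by Propositions~\ref{prop:soundness} and~\ref{prop:simulation}) and ordering the residual $\gamma_6,\omega_6$ reductions by the well-founded lexicographic product $\ggr$. The converse is Theorem~\ref{thm:SNtypable-Gtz}, again via typeability of normal forms (Proposition~\ref{prop:nf-are-typ}) and head subject expansion (Proposition~\ref{prop:sub-exp-Gtz}). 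Since $\lGR$-terms form one of the two sorts of $\lGR$-expression, specialising Theorem~\ref{thm:SNtypable-Gtz} to terms yields the stated equivalence for $\lGR$-terms.

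There is no genuine obstacle at this stage: all the mathematical content lives in the four cited theorems, and what remains is purely the bookkeeping of pairing up the two directions. The only points worth checking are that, for each fixed $\mathcal{R}\subseteq\{\co,\w\}$, the notion of typeability and the set of reduction rules used in the ``if'' and the ``only if'' halves coincide (so that both halves really concern the same calculus), and that instantiating $\mathcal{R}$ specialises the uniform proofs to each concrete calculus of the cube. Both are immediate from the parameterised presentation, so the theorem follows as a corollary.
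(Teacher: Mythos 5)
Your proposal is correct and matches the paper exactly: the paper itself states this theorem as a corollary obtained by pairing Theorem~\ref{th:typ=>SN} with Theorem~\ref{thm:SNtypable} for the ND-base, and Theorem~\ref{th:typ=>SN-Gtz} with Theorem~\ref{thm:SNtypable-Gtz} for the LJ-base, which is precisely your assembly. Your additional check that both directions concern the same parameterised calculus for each fixed $\mathcal{R}$ is sound bookkeeping that the paper leaves implicit.
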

} 
\section{Conclusions}
\label{sec:conclusions}

In this paper, we  proposed intersection type assignment
systems for $\lR$ and $\lGR$-calculi,
\sil{two systems of lambda calculi parametrized with respect to $\mathcal{R} \subseteq \{\co, \w\}$, where}
$\co$ is a contraction and $\w$ is a weakening.
These two families of lambda calculi form the so-called \emph{resource control
cube}. Four $\lR$-calculi form the ``natural deduction base" of the
cube, corresponding to the ``implicit base'' of~\cite{kesnrena09},
whereas ``the sequent base'' contains four $\lGR$-calculi,
generalization of $\ell\lambda^{\mathsf{Gtz}}$-calculus of~\cite{ghilivetlesczuni11}.
In each base, the calculi differ by the implicit/explicit treatment
of the resource operators contraction and weakening.

The intersection type systems proposed here, for resource control
lambda and sequent lambda calculus, give a complete
characterisation of strongly normalising terms for all eight
calculi of the resource cube.
We propose general proofs for each base handled by
various side conditions in some cases.
In order to prove the strong normalisation of typeable
resource lambda terms, we use an appropriate modification of
the reducibility method. The same property
for resource sequent lambda expressions is proved by using a well-founded
lexicographic order based on suitable embedding into the former
calculi. 
This paper expands the range of
the intersection type techniques and combines different methods in
the strict types environment. Unlike the approach of introducing
non-idempotent intersection into the calculus with some kind of
resource management \cite{pagaronc10}, our intersection is
idempotent.

It would be interesting to investigate the relation between
the resource control enabled via explicit operators used
here and the approach used in \cite{pagaronc10}, where the
resources are managed via applicative bags with multiplicities.
Another direction will involve the investigation of the
use of intersection types in constructing models for sequent lambda calculi,
since intersection types are known powerful means for building
models of lambda calculus (\cite{barecoppdeza83,dezaghillika04}).
On the other
hand it should be noticed that the substitutions in our $\lGR$-calculi
are implicit.
Considering explicit susbtitutions would complete the sequent part
of the cube as Kesner and Renaud have done for $\lR$ in their
prismoid.

Furthermore, resource control lambda and sequent lambda calculi
are good candidates to investigate the computational content of
substructural logics~\cite{schrdose93}, both in natural deduction
and sequent calculus.  The motivation for these logics comes from
philosophy (Relevant and Affine  Logic), linguistics (Lambek Calculus) and
computing (Linear Logic). The basic idea of resource control is to
explicitly handle structural rules, so that the absence of (some)
structural rules in substructural logics such as weakening,
contraction, commutativity, associativity can possibly be handled
by resource control operators. This is in the domain of further
research. 

\pierre{From a more pragmatic perspective,  resources need to be controlled tightly in computer
applications.  For instance, Kristoffer Rose has undertaken the description of compilers by rules
with
binders~\cite{rose11:_implem_trick_that_make_crsx_tick,rose:LIPIcs:2011:3130}. He noticed that
the implementation of substitutions of linear variables by
inlining is efficient, whereas substitutions of duplicated
variables require a cumbersome and time consuming mechanism, based on
pointers. It  is therefore important to precisely control
duplications.  On the other hand, strong control of erasing does
not require a garbage collector and prevents memory leaking.}
\silv{Another line of application of resource control is related to object-oriented languages.
Alain Mycroft~\cite{mycr11} presented resource aware type-systems for multi-core program efficiency.
In this framework an identified ``memory isolation" property enables multi-core programs to avoid slowdown due to cache contention. The existing work on Kilim and its isolation-type system is related to both substructural types and memory isolation.
}

Finally, the two calculi with both resource control operators
explicit, namely $\lambda_{{\tt C W}}$ of \cite{kesnrena09} and
$\ell\lambda^{\mathsf{Gtz}}$-calculus of \cite{ghilivetlesczuni11} deserve particular
attention. Due to the multiplicative style of the typing rules,
and the reductions' orientation of propagating the contraction in
the term and extracting the weakening out of the term, these
calculi exhibit optimization in terms of the minimal total size of
the bases used for the type assignments. The consequences of this
property, particularly for the implementation related issues,
should be investigated.\\

\textbf{Acknowledgements} Above all, our gratitude goes to two anonymous referees for the present submission. Their extensive and detailed comments helped us tremendously to improve our work. We would also like to thank Dragi\v sa \v Zuni\' c for fruitful discussion.
\bibliographystyle{alpha}
\begin{normalsize}

\end{normalsize}

\end{document}